\newtheorem{remark}{Remark}[section] 
\newtheorem{lemma}{Lemma}[section] 
\newtheorem{example}{Example}[section] 
\numberwithin{equation}{section}
\begin{document}
\title{Multilevel and multiscale schemes for fractional partial differential equations}\thanks{This work was supported by the National Natural Science Foundation of China under
Grant No. 11271173.}
\author{Zhijiang Zhang}\address{School of Mathematics and Statistics,
Gansu Key Laboratory of Applied Mathematics and Complex Systems, Lanzhou University, Lanzhou 730000, P.R. China. zhjzhang14@lzu.edu.cn; dengwh@lzu.edu.cn}
\author{Weihua Deng$^{1}$}
%
%
\begin{abstract}
The  wavelet numerical methods for  the classic PDEs have been well developed, but they  are still not discussed for the fractional PDEs.
 This paper focuses on investigating the applications of wavelet bases to numerically solving fractional PDEs and digging out the potential benefits of wavelet methods comparing with other numerical methods, especially in the aspects of realizing preconditioning, adaptivity, and keeping the Toeplitz structure. More specifically, the contributions of this paper are as follows: 1. the techniques of efficiently generating stiffness matrix with computational cost $\mathcal{O}(2^J)$ are provided for first, second, and any order bases; 2. theoretically and numerically discuss the effective multilevel preconditioner for time-independent equation and multiresolution multigrid method for time-dependent equation, respectively; 3. the wavelet multiscale adaptivity is experimentally discussed and numerically applied to solve the time-dependent (independent) equations. In fact, having reliable, simple, and local regularity indicators is the striking benefit of the wavelet in adaptively solving fractional PDEs.

\end{abstract}
%
%
\subjclass{35R11, 65T60, 65F08, 65M55}
\keywords{ fractional PDEs, wavelet preconditioning, wavelet multigrid, wavelet adaptivity, fast wavelet transform, multilevel scheme.}
\maketitle
\section{Introduction}
The continuous time random walk (CTRW), a fundamental model in statistic physics, is a stochastic process with arbitrary distributions of jump lengths and waiting times. When the jump length and/or waiting time distribution(s) are/is power law and the second order moment of jump lengths and/or the first order moment of waiting times are/is divergent, the CTRW describes the anomalous diffusion, i.e., the super and sub diffusive cases; and its Fokker-Planck equation has space and/or time fractional derivative(s) \cite{Metzler:00}. It can be noted that the corresponding fractional PDEs are essentially dealing with the  multiscale phenomena; and generally the fractional PDEs have weaker regularity at the area close to boundary and initial time. Besides anomalous diffusion, the fractional models are also used to characterize the memory and hereditary properties inherent in various materials and processes and, recently, much more scientific applications are found in a variety of fields; see, e.g.,  \cite{Mainardi:10, Meerschaert:11,Raberto:02, Zaslavsky:02} and the references therein.

The obtained analytical solutions of fractional PDEs are usually in the form of transcendental functions or infinite series; and in much more cases, the analytical solutions are not available. Then the approximation and numerical techniques for solving the fractional PDEs become essential and have been developed very fast recently, such as, the finite difference method \cite{Chen:15, Meerschaert:04,Tian:15,Yang:10,Zhang:14}, the finite element method \cite{Deng:08, Ervin:05, Ervin:07,Wang:14,Xu:14}, and the spectral method \cite{Li:09, Li:10, Zayernouri:13}. But the computational expenses and nonuniform regularity are still the big challenges that one faces in numerically solving the fractional PDEs, owing to the nonlocality and potential multiscale characteristics of the fractional derivatives; and basing on the preconditioning, adaptivity, and fast transform techniques to develop high efficient methods seems to be a new trend.  The preconditioning techniques are discussed in \cite{ Moroney:13,Yang:11}, where the Krylov subspace projection is their common theme. Fast transform method and multigrid method are provided in  \cite{Wang:10} and \cite{Pang:12}, respectively.

So far,  there seems to be very limited works \cite{Jafari:11,Saadatmandia:12,Saeed:13, Wang:114} to  solve the fractional  PDEs  or ODEs  by wavelet,  although the wavelet numerical methods for  classical PDEs  or ODEs have been well developed \cite{Cohen:00,Urban:09}.
The goal of this paper is to dig out the potential advantages of wavelets in treating the fractional operators, including preconditioning, multigrid, adaptivity, and keeping the quasi-Toeplitz structure for arbitrary order wavelet bases. More concretely, the clearly  obtained benefits of wavelets for fractional operator consist of the following: 1) stiffness matrix of fractional operator is Toeplitz for scaling bases because of their shift-invariant property (it is not always true for the familiar finite element bases, such as the quadratic or cubic element) and a simple diagonal scaling usually produces a good preconditioner; 2) multiscale coefficients indicate the local regularity, and they can be used as the indicator (local posteriori error estimate seems hard to be obtained for the adaptive finite element method because of the global property of the operator) in the adaptive mesh refinement for controlling the entire computational process and increasing the efficiency, i.e., one only needs to make the local refinement on the subdomain where the wavelet coefficients are larger compared with those of other places. For avoiding all non indispensable complications, we present the main ideas and techniques in their simplest form  and restrict ourselves to the following homogeneously space fractional PDE \cite{Ervin:05,Tian:15, Wang:14}:
\begin{equation}\label{eq:1.1}
	   qu_t+{\bf A}u=f  \qquad \mbox{on}\  \Omega,
	\end{equation}
	where  $ \Omega =(0,1)$, $q=0$ or $1$; and ${\bf A}$ is a $(2-\beta)$-th $(0\le\beta<1)$ order differential operator
	\begin{equation}
	{\bf A}u:=-\kappa_{\beta}D\left(p\ {}_0D_x^{-\beta} + (1-p)\ {}_xD_1^{-\beta}\right)Du
	\end{equation}
  with $\kappa_{\beta}>0$ being the generalized diffusivity; $0\le p\le 1$, $D$ represents a single spatial derivative; ${}_0D_x^{-\beta}$ and ${}_xD_1^{-\beta}$	are the left and right fractional integral operators \cite{Metzler:00},
    being, respectively, defined as
	\begin{eqnarray} \label{eq:1.3}
	{}_0D_x^{-\beta}u:&=&\frac{1}{\Gamma(\beta)}\int_0^x (x-s)^{\beta-1} u(s)\,\mathrm{d}s,\\
	{}_xD_1^{-\beta}u:&=&\frac{1}{\Gamma(\beta)}\int_x^1 (s-x)^{\beta-1} u(s)\,\mathrm{d}s.
	\end{eqnarray}
When $q=1$, one gets the fractional initial boundary value problem (IBVP) with an additional initial condition $ u(x,0)=g(x) $; and  when $q=0$, it is the fractional boundary value problem (BVP), which can also be regarded as the steady state equation of the associated IBVP. Considering the homogeneous boundary condition and using integration by parts, one can easily get
	$D{_0D_x^{-\beta}}Du={}_0D_x^{2-\beta}u$ and $D{}_xD_1^{-\beta}Du={}_xD_1^{2-\beta}u$.
Then one can reduce the model (\ref{eq:1.1}) to a more familiar form, and a basic theoretical framework for its variational solution has been presented in \cite{Ervin:05}; this will enable us to put focus on the wavelet numerical methods themselves.

This paper is organized as follows. In Section 2, we give a brief recall to  the spline scaling and wavelet functions.  They have the closed-form expression, which is of course attractive for the fractional operators. In Section 3, we study the computational formulation with respect to the uniform grids. We first discuss the effective way to construct the algebraic system, and then derive the multilevel wavelet preconditioning and the multiresolution multigrid schemes (MMG) for solving the  BVP and IBVP, respectively. In Section 4, we give some heuristically adaptive algorithms and show how singularities can be easily detected by wavelet, and  put our attention on its efficiency by proposing and testing the adaptive algorithms that concentrate the degrees of freedom in the neighborhood of near singularities. The numerical results are shown in Section 5 and we conclude the paper with some remarks  in the last section.

\section {Preliminaries}
In this section, we collect/present some essential properties  of the scaling functions
and wavelets to make the paper self-contained and more readable. For the details, refer to  \cite{Primbs:10,Primbs:09} and \cite{Cohen:00, Urban:09}. 
First, we give the definitions of the fractional Sobolev spaces used in this paper.
For any $s\ge0$, let  $\mathcal{H}^{s}(\mathbb{R})$  be the Sobolev space of order $s$ on $\mathbb{R}$, and $\mathcal{H}^{s}(\Omega)$ the space of the restriction of the functions from $\mathcal{H}^{s}(\mathbb{R})$.  More specifically,
\begin{equation}
\mathcal{H}^s(\mathbb{R})=\left\{u(x)\in L^2(R)\,\big|\left|u\right|^2_{\mathcal{H}^s(\mathbb{R})}<\infty\right\}
\end{equation}
endowed with the seminorm
\begin{equation}
\left|u\right|^2_{\mathcal{H}^s(\mathbb{R})}= \int_{\mathbb{R}}|\omega|^{2s}\left|\mathscr{F}[u](\omega)\right|^2d\omega
\end{equation}
and the norm
\begin{equation}
\left\|u\right\|^2_{\mathcal{H}^s(\mathbb{R})}=
\int_{\mathbb{R}}\left(1+|\omega|^{2s}\right)\left|\mathscr{F}[u](\omega)\right|^2d\omega\sim \int_{\mathbb{R}}\left(1+|\omega|^2\right)^{\mu}\left|\mathscr{F}[u](\omega)\right|^2d\omega;
\end{equation}
\begin{equation}
\mathcal{H}^{s}(\Omega)=\left\{u\in L^2(\Omega)\,\big| \exists \tilde{u}\in \mathcal{H}^s(\mathbb{R}) {~\rm such ~that~} \tilde{u}|_{\Omega}=u\right\}
\end{equation}
endowed with
\begin{eqnarray}
|u|_{\mathcal{H}^s(\Omega)}=\inf_{\tilde{u}|_{\Omega}=u}\left|\tilde{u}\right|_{\mathcal{H}^s(\mathbb{R})}
{~\rm and~}
 \left\|u\right\|^2_{\mathcal{H}^s(\Omega)}=\left\|u\right\|^2_{L^2(\Omega)}+|u|^2_{\mathcal{H}^s(\Omega)},
 \end{eqnarray}
 where $\mathscr{F}[u]$ denotes the Fourier transform of $u$.
And $\mathcal{H}_0^s(\Omega)$ is defined as the closure of $C_0^{\infty}(\Omega)$ w.r.t. $\|\cdot\|_{\mathcal{H}^s(\Omega)}$.

Let $[x_0,\ldots,x_d]f$ denote the $d$-th order {\em divided difference} of $f$ at the points $x_0,\ldots, x_d$, $t_+^l:=(\max\{0,t\})^l, d\ge 2$; and choose the Schoenberg sequence of knots 
  \begin{equation}
	{\boldsymbol{t}^j}:=\{\underbrace{0,\ldots,0}_{d}, 2^{-j},2\times 2^{-j},3\times 2^{-j},\ldots,1-2^{-j},\underbrace{1,\ldots,1}_{d}\},
	\end{equation}
to define the scaling function sets $\Phi_j=\Big\{ \phi_{j,k}, k\in\triangle_j=\left\{1,\ldots,2^j+d-3\right\}\Big\}$ with
	\begin{equation}\label{scale_base}
	\phi_{j,k}(x):=2^{\frac{j}{2}}(t_{k+d+1}^j-t_{k+1}^j)[t_{k+1}^j,\ldots,t_{k+d+1}^j](t-x)_+^{d-1},
	\end{equation}
which is the scaled B-Splines \cite{Primbs:10}.
Then the sequence $S_j={\rm span}\{\Phi_j\}$ forms a multiresolution analysis (MRA)  of $L_2(I)$, where $I=(0,1)$ . The system $\Phi_j$ is uniformly local and locally finite, i.e., ${\rm diam} ({\rm supp}\phi_{j,k})\stackrel{<}{\sim}2^{-j}$ and $ \#\{\phi_{j,k}: {\rm supp}\phi_{j,k}\cap {\rm supp}\phi_{j,i}\}\stackrel{<}{\sim}1$; it forms a stable Riesz basis of $S_j$, i.e.,
	\begin{equation}
 c_{\Phi}\|{\boldsymbol{c}_j}\|_{l_2(\triangle_j)}\le\Big\|\sum_{k\in\Delta_j}c_{j,k}\phi_{j,k}\Big\|_{L_2(\Omega)}
    \le C_{\Phi}\|{\boldsymbol{c}_j}\|_{l_2(\triangle_j)}; \label{eq:uniform stable}
    \end{equation}
and $S_j$ satisfies the Jackson and Bernstein estimates, i.e.,
	\begin{eqnarray} %
	  & &\inf_{v_j\in S_j}\left\|v-v_j\right\|_{L_2(\Omega)}\stackrel{<}{\sim}2^{-jd}\left\|v\right\|_{\mathcal{H}^d(I)} \quad \forall v\in \mathcal{H}_0^d(I),\\
	  & &\left\|v_j\right\|_{\mathcal{H}^s(\Omega)}\stackrel{<}{\sim}2^{js}\left\|v_j\right\|_{L_2(I)} \quad \forall  v_j\in S_j,\ 0\le s\le \gamma,\label{eq:Jackson}
	\end{eqnarray}
    where $\gamma:=\sup\{\nu\in \mathbb{R}:v_j\in \mathcal{H}^\nu(I)~~ \forall v_j\in S_j\}$ and by $A\stackrel{<}{\sim}B$ we mean that $A$ can be bounded by a multiple of $B$, independent of the parameters they may depend on.

  Since $\Phi_j$ is a Riesz basis of $S_j$, there exists a dual MRA sequence $\tilde{S}_j={\rm span}\{\widetilde{\Phi}_j\}$, which also forms a MRA of $L_2(\Omega)$. And one can define the biorthogonal projector:
 \begin{equation}
   P_j: L_2(\Omega)\rightarrow S_j, \qquad P_jv:=\sum_{k\in\triangle_{j}}\left(v,\tilde{\phi}_{j,k}\right)\phi_{j,k},
   \end{equation}
where $(\cdot,\cdot)$ denotes the $L^2$ inner product.
Then $P_{j+1}P_j=P_jP_{j+1}=P_j$, and for $0\le s\leq 1$,
\begin{equation}\label{project}
  \left\|v-P_jv\right\|_{\mathcal{H}^s(\Omega)}\stackrel{<}{\sim}2^{j(s-\gamma)}\left\|v\right\|_{\mathcal{H}^\gamma(\Omega)},\qquad 0\le s<\gamma < d.
\end{equation}
One can also construct the interval biorthogonal wavelet sets $\Psi_j=\{\psi_{j,k},k\in\nabla_j\}$ and $\tilde{\Psi}_j=\{\tilde{\psi}_{j,k},k\in\nabla_j\}$; it holds the
norm equivalence, i.e., there exist $\tilde{\sigma}, \sigma >0$ such that for the Sobolev space $ \mathcal{H}^s(\Omega)$,
\begin{equation} \label{normequiv}
	\quad\Big\|\sum_{j\ge J_0-1}\sum_{k\in\nabla_j}d_{j,k}\psi_{j,k}\Big\|_{\mathcal{H}^s(\Omega)}^2\sim
	                                  \sum_{j\ge J_0-1}\sum_{k\in \nabla_{j}}2^{2js}\big|d_{j,k}\big|^2 \qquad \forall \ s\in(-\tilde{\sigma},\sigma),
\end{equation}
where $\psi_{J_0-1,k}:=\phi_{J_0,k}, \nabla_{J_0-1}:=\triangle_{J_0}, d_{J_0-1,k}:=c_{J_0,k}$; $J_0$ denotes the lowest level.
It also means that $\bigcup_{j=J_0-1}^\infty2^{-js}\Psi_j$ is a Riesz basis of $\mathcal{H}_0^s(\Omega)$.

Moreover, denote $W_j={\rm span}\{\Psi_j\}$. Then the operator $Q_j:=P_{j+1}-P_j$ is a projection onto the space $W_j$, having the representation
   \begin{equation}
   Q_jv=\sum_{k\in\nabla_j}\left(f,\tilde{\psi}_{j,k}\right)\psi_{j,k}.
   \end{equation}
And for $0<\gamma<d$, there exists
   \begin{equation}\label{compression}
      \left|\left(f,\tilde{\psi}_{j,k}\right)\right|\stackrel{<}{\sim}\inf_{p\in P_{d-1}}\left\|f-p\right\|_{L_2({\rm supp}  \tilde{\psi}_{j,k})}\left\|\tilde{\psi}_{j,k}\right\|\stackrel{<}{\sim}2^{-j{\gamma}}\left\|f^{(\gamma)}\right\|_{L_2({\rm supp} \tilde{\psi}_{j,k})}.
   \end{equation}
   This shows that the wavelet coefficients are small provided that the function is locally smooth, which is the   foundation to design  wavelet adaptive algorithms.

   Note that based on the scaling function sets $\Phi_j$, some other special wavelets can also be constructed.  If one demands $W_j=\tilde{W}_j$ and $ S_j=\tilde{S}_j$, the semiorthogonal  wavelets  $\Psi_j$ can be obtained \cite{Chui:92};
  if one chooses $d=2$, $\psi(x)=\frac{1}{\sqrt{2}}\phi_{1,1}(x), \Psi_j=\{\psi_{j,k}=2^{\frac{j}{2}}\psi(2^jx-k),k\in\nabla_j\}$,
  then the interpolation wavelet $\{\Psi_j\}_{j\ge-1}$ is obtained, and it also satisfies the norm equivalence for $ s\in (1,\frac{3}{2})$ (Page 605 of \cite{Cohen:00});
  and if $d=4$, let $\Phi_j=\{\phi_{j,k},k\in\triangle_j/\{1,2^j+1\}\}$, and define $\phi(x), \phi_b(x),\psi(x)$ and $\psi_b(x)$ by
	\begin{eqnarray}
	\phi(x)&=&\frac{1}{6}\sum_{i=0}^4{4 \choose i}(-1)^i(x-i)_+^3, \label{Cubic:1}\\
	\phi_b(x)&=&\frac{3}{2}x_+^2-\frac{11}{12}x_+^3+\frac{3}{2}(x-1)_+^3-\frac{3}{4}(x-2)_+^3+\frac{(x-3)^3_+}{6},\label{Cubic:2}\\
	\psi(x)&=&-\frac{1}{4}\phi(2x)+\phi(2x-1)-\frac{1}{4}\phi(2x-2),\\
  \psi_b(x)&=&\phi_b(2x)-\frac{1}{4}\phi(2x).\label{Cubic:4}
	\end{eqnarray}
One has the semi-interpolation spline wavelet $\Psi_j=\{\psi_b(2^j x), \psi_{j,k}\left|_{0\le k\le (2^j-3)},\right. \psi_b(2^j(1-x))\}$,  which satisfies the so-called {\em point value vanishing property}; and the wavelet expansion coefficients also indicate  the regularity of  the approximation function \cite{Cai:96}.  In practice, the base functions $\phi_{j,1}(\cdot)$ and $\phi_{j,2^j+1}(\cdot)$ are usually added for removing the limitation that the first order derivative of the (to be approximated) function at the boundary needs to be zero.
	
 Finally,  we point out that there  exists the refinement relations
	\begin{equation}\label{Refine-raltion}
	\Phi_j^T=\Phi_{j+1}^TM_{j,0}, \qquad \Psi_j^T=\Phi_{j+1}^TM_{j,1}.
	\end{equation}
And the space $S_J$ can be written as $S_J=S_{J_0}\oplus W_{J_0}\oplus \cdots \oplus W_{J-1}$. For any $u_J\in S_J$, it follows that
	\begin{equation}\label{eq:2.15}
	u_J=\sum_{k\in \triangle_J}c_{J,k}\phi_{J,k}=\sum_{k\in\triangle_{J_0}}c_{J_0,k}\phi_{J_0,k}+\sum_{j=J_0}^{J-1}\sum_{k\in\nabla_j}d_{j,k}\psi_{j,k}.
	\end{equation}
 Denote ${\boldsymbol {\mathrm c}}_j=(c_{j,k})_{k\in \triangle_j}$, $ {\boldsymbol{\mathrm d}_j}=(d_{j,k})_{k\in\nabla_j}$
 and $ {\boldsymbol d}_J=({\boldsymbol{\mathrm c}}_{J_0},{\boldsymbol{\mathrm d}}_{J_0},\ldots,{\boldsymbol{\mathrm d}}_{J-1})$.
 Then there exists a fast wavelet transform (FWT) between the single-scale and the multiscale representations, i.e.,
\begin{equation}\label{eq:2.16}
{\boldsymbol c}_J=M{\boldsymbol d}_J,
\end{equation}
which can be performed with the cost $\mathcal{O}(2^J)$; here
	\begin{equation}\label{eq:2.21}
	M=\left(\begin{array}{cc}M_{J-1}&0\\ 0&I_{J-1}\end{array}\right)\left(\begin{array}{cc}M_{J-2}&0\\0&I_{J-2}\end{array}\right)\cdots
	                                                                 \left(\begin{array}{cc}M_{J_0}&0\\0&I_{J_0}\end{array}\right)
	\end{equation}
and $M_j=(M_{j,0},M_{j,1})$.

\section {Uniform Schemes}
The nonlocal property of fractional operator makes the matrix of its discretizations inevitably dense. We will show that the chosen bases being the dilation and translation of one single function render the matrix to have a special structure, which greatly reduces the cost of computing and storing the entires. In this sense, these kind of bases are superior to the other possible bases, such as the usually used finite element or spectral polynomial bases. Meanwhile, based on the benefits of these bases, a simple diagonal preconditioner and the fast transform  are presented to enhance the effectiveness of the widely used nonlinear or linear iterative schemes.

We first consider the BVP of (\ref{eq:1.1}) with $q=0$. It has the variational formulation:
  Find $u\in\mathcal{H}_0^{\alpha}(\Omega)$ with $\alpha=1-\beta/2( 0\le \beta<1)$, such that
	\begin{equation}\label{eq:2.2.1}
	a(u,v)=(f,v) \qquad \forall v\in \mathcal{H}_0^{\alpha}(\Omega).
	\end{equation}
 More precisely, using integration by parts and the adjoint property of fractional integral operator \cite{Deng:08} leads to
    \begin{eqnarray}\label{elliptic}
     \qquad\, a(u,v)&=&\left \langle -\kappa_{\beta}D(p\ {}_0D_x^{-\beta} + (1-p)\ {}_xD_1^{-\beta})Du,\ v \right \rangle\\ \nonumber
		   &=&\kappa_{\beta} \left \langle p\ {}_0D_x^{-\beta}Du+(1-p){}_xD_1^{-\beta}Du,\ Dv \right \rangle\\		\nonumber			
           &=&\kappa_{\beta}\left( p\ {}_0D_x^{-\beta/2}Du,{}_xD_1^{-\beta/2}Dv\right)+\kappa_{\beta}\left((1-p){}_xD_1^{-\beta/2}Du, {}_0D_x^{-\beta/2}Dv \right).
  \end{eqnarray}

The bilinear form  $a(\cdot,\cdot): \, \mathcal{H}_0^{\alpha}(\Omega)\times \mathcal{H}_0^{\alpha}(\Omega)\rightarrow \mathbb{R}$  is continuous and coercive \cite{Ervin:07}, i.e.,
	\begin{equation}\label{coervcice}
	   |a(u,v)|\stackrel{<}{\sim }\|u\|_{\alpha}\|v\|_{\alpha}, \qquad a(u,u)\stackrel{>}{\sim}\|u\|_{\alpha}^2.
	\end{equation}
For $f\in L_2(\Omega)$, Eq. (\ref{eq:2.2.1}) admits a unique solution.
Letting $S_J$ be a subspace of $\mathcal{H}_0^{\alpha}(\Omega)$ with order $d$, the Galerkin approximation $u_J$ belonging to $S_J$ satisfies
	\begin{equation}\label{eq:2.2.2}
	a(u_J, v_J)=(f,v_J) \qquad  \forall \ v_J\in S_J.
	\end{equation}
If $u$ is sufficiently smooth, following (\ref{project}) and the Ce$\acute{a}$'s lemma, one gets
	\begin{equation}
	\|u-u_J\|_{\alpha}\stackrel{<}{\sim}\inf_{v_J\in S_J}\|u-v_J\|_{\alpha}\stackrel{<}{\sim}2^{J(\alpha-d)}\|u\|_{\mathcal{H}^d(\Omega)}.
	\end{equation}

For space discretization, one can either use the scaling basis $\Phi_J $ or the multiscale basis $\Psi^J=\left\{\Psi_j\right\}_{j=J_0-1}^{J-1}$, 
generating the following linear systems, respectively,
\begin{eqnarray}
 A_J {\boldsymbol c}_J&=&F_J, \\
 \hat{A}_J  {\boldsymbol d}_J&=&\hat{F}_J, \label{eq:3.2}
\end{eqnarray}
where $A_J=a(\Phi_J,\Phi_J),\, F_J=(f,\Phi_J),\, \hat{A}_J=a(\Psi^J,\Psi^J),\, \hat{F}_J=(f,\Psi^J)$. There are the following Lemmas.

\begin{lemma}\label{shift:1}
Let $\phi(x)\in \mathcal{H}_0^{\alpha}(\Omega)$,  {\rm supp}${\phi(x)}=[0,d]$\,and $\phi_{J,k}(x):=2^{J/2}\phi(2^Jx-k), 0\le k\le 2^J-d,\,k\in \mathbb{N}$. Then $a\left(\phi_{J,k_1},\phi_{J,k_2 }\right)=a\left(\phi_{J,k_1^\prime},\phi_{J,k_2^\prime }\right)$ if and only if $k_2-k_1=k_2^\prime-k_1^\prime$.
 \end{lemma}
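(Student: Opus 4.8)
The plan is to establish the operative content of the lemma, namely that $a(\phi_{J,k_1},\phi_{J,k_2})$ depends on the two indices only through the difference $k_2-k_1$, which is exactly what renders $A_J$ Toeplitz. I would do this by recasting the bilinear form as a genuinely translation-invariant, full-line convolution pairing and then shifting both arguments by one mesh step $2^{-J}$. The convenient starting point is the semireduced form of $a$ obtained in (\ref{elliptic}),
$$a(u,v)=\kappa_\beta\big\langle\, p\,{}_0D_x^{-\beta}Du+(1-p)\,{}_xD_1^{-\beta}Du,\ Dv\,\big\rangle,$$
since here $Dv$ enters as a bare factor supported inside ${\rm supp}\,v$.

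The key reduction uses the support hypothesis. Because $0\le k_i\le 2^J-d$, one has ${\rm supp}\,\phi_{J,k_i}=[\,k_i2^{-J},(k_i+d)2^{-J}\,]\subseteq[0,1]$, so $Du=D\phi_{J,k_1}$ and $Dv=D\phi_{J,k_2}$ vanish outside $[0,1]$. Introducing the Riemann--Liouville kernel $\eta_\beta(x):=x_+^{\beta-1}/\Gamma(\beta)$ and its reflection $\check\eta_\beta(x):=\eta_\beta(-x)$, I would note that for any $w$ with ${\rm supp}\,w\subseteq[0,1]$ the definitions (\ref{eq:1.3}) give $({}_0D_x^{-\beta}w)(x)=(\eta_\beta*w)(x)$ and $({}_xD_1^{-\beta}w)(x)=(\check\eta_\beta*w)(x)$, with $*$ the convolution over $\mathbb{R}$; this is immediate once one uses $w\equiv0$ off $[0,1]$ to replace the truncated limits $0$ and $1$ by $\mp\infty$. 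Applying this with $w=Du$ and then, since $Dv$ vanishes off $(0,1)$, extending the outer integral from $(0,1)$ to $\mathbb{R}$ (the added part is multiplied by $Dv\equiv0$), I obtain
$$a(\phi_{J,k_1},\phi_{J,k_2})=\kappa_\beta\int_{\mathbb{R}}\big[\,p\,(\eta_\beta*Du)(x)+(1-p)\,(\check\eta_\beta*Du)(x)\,\big]\,Dv(x)\,\mathrm{d}x,$$
which is now a purely convolutional expression in $(Du,Dv)$ with no reference to the endpoints $0,1$.

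Translation invariance then finishes the forward implication. From $\phi_{J,k+1}(x)=\phi_{J,k}(x-2^{-J})$ one gets $D\phi_{J,k_i+1}=(D\phi_{J,k_i})(\cdot-2^{-J})$; using $\eta_\beta*\big(w(\cdot-h)\big)=(\eta_\beta*w)(\cdot-h)$ and the substitution $x\mapsto x+2^{-J}$ in the last display, the integral is unchanged, so $a(\phi_{J,k_1},\phi_{J,k_2})=a(\phi_{J,k_1+1},\phi_{J,k_2+1})$ whenever both index pairs lie in the admissible range. Iterating the one-step shift shows $a(\phi_{J,k_1},\phi_{J,k_2})=g(k_2-k_1)$ for a function $g$ depending only on $\phi,\beta,p,J$, which is the ``if'' direction and yields the Toeplitz structure of $A_J$ invoked in the sequel.

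I expect the main obstacle to be the convolution-identity step: one must verify that the finite-interval operators ${}_0D_x^{-\beta},{}_xD_1^{-\beta}$ really coincide with the full-line convolutions on the whole range of $x$ that the integral meets after extension to $\mathbb{R}$, and that the slowly decaying ``tails'' of $\eta_\beta*Du$ beyond ${\rm supp}\,u$ cause no trouble. This is controlled precisely by the hypothesis $0\le k\le 2^J-d$, which keeps every support inside $[0,1]$ and makes the replacement of the truncated endpoints by $\mp\infty$ exact; the tails are harmless because they are always paired against $Dv$, which is supported inside $(0,1)$. Finally, I would remark that the literal ``only if'' assertion is injectivity of $m\mapsto g(m)$, which is neither needed for nor implied by the Toeplitz property; the genuinely useful statement is that each entry is a well-defined function of $k_2-k_1$ alone.
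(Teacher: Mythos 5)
Your argument is correct for the direction that matters, and it rests on the same underlying fact as the paper's proof --- the kernel $(x-s)^{\beta-1}$ depends only on $x-s$ while the bases are integer translates of a single profile --- but you package it differently. The paper proceeds by brute-force change of variables: it writes the first term of (\ref{elliptic}) as $\frac{1}{\Gamma(\beta)}\int_0^1\int_0^x(x-s)^{\beta-1}\phi_{J,k_1}'(s)\,\mathrm{d}s\,\phi_{J,k_2}'(x)\,\mathrm{d}x$ and substitutes $x\mapsto 2^{-J}(x+k_2)$, $s\mapsto 2^{-J}(s+k_1)$ to land on the canonical integral $\frac{2^{2J\alpha}}{\Gamma(\beta)}\int_0^d\int_0^{x+k_2-k_1}(x-s+k_2-k_1)^{\beta-1}\phi'(s)\,\mathrm{d}s\,\phi'(x)\,\mathrm{d}x$, which manifestly depends only on $k_2-k_1$; the second term of $a(\cdot,\cdot)$ is handled by the transposition identity. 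Your convolution reformulation makes the translation invariance structural rather than computational, and the hypothesis $0\le k\le 2^J-d$ plays exactly the same role in both versions (it keeps all supports inside $[0,1]$, so that the truncated operators coincide with the full-line convolutions, respectively so that the substituted integration limits collapse to the stated canonical form). What the paper's explicit computation buys is the closed-form reduced integral, which is then reused almost verbatim in Lemma 3.2 for the boundary functions and underlies the Appendix formulas; what your version buys is a cleaner, dimension- and basis-independent statement of why shift-invariance of the operator plus shift-structure of the basis forces a Toeplitz matrix. Finally, your closing remark is well taken and applies equally to the paper: the published proof also only establishes the ``if'' direction (the entry is a function $g(k_2-k_1)$) and never addresses injectivity of $g$; indeed for $p=1/2$ the form is symmetric, so $g(m)=g(-m)$ and the literal ``only if'' is false. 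This is an imprecision in the lemma's statement, not a gap in your argument.
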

 \begin{proof}
For $\phi(x)\in \mathcal{H}_0^{\alpha}(\Omega)$, there holds
 \begin{eqnarray*}
 &{}&\left({}_0D_x^{-\beta/2}D\phi_{J,k_1}, {}_xD_1^{-\beta/2}D\phi_{J,k_2}\right) \\
  &{}& =\frac{1}{\Gamma(\beta)}\int_0^1\int_0^x \left(x-s\right)^{\beta-1} \phi_{J,k_1}^{\prime}(s)\,\mathrm{d}s\, \phi^{\prime}_{J,k_2}(x)\,\mathrm{d}x\\
 &{}&=\frac{2^{3J}}{\Gamma(\beta)}\int_{2^{-J}k_2}^{2^{-J}(d+k_2)}\int_0^x \left(x-s\right)^{\beta-1} \phi^{\prime}\left(2^Js-k_1\right)\,\mathrm{d}s\, \phi^{\prime}\left(2^Jx-k_2\right)\,\mathrm{d}x\\
 &{}&=\frac{2^{2J}}{\Gamma(\beta)}\int_{0}^{d}\int_0^{2^{-J}(x+k_2)} \left(2^{-J}\left(k_2+x\right)-s\right)^{\beta-1} \phi^{\prime}\left(2^Js-k_1\right)\,\mathrm{d}s\, \phi^{\prime}(x)\,\mathrm{d}x\\
 &{}&=\frac{2^{2J\alpha}}{\Gamma(\beta)}\int_{0}^{d}\int_{-k_1}^{x+k_2-k_1}(k_2+x-s-k_1)^{\beta-1} \phi^{\prime}\left(s\right)\,\mathrm{d}s\, \phi^{\prime}(x)\,\mathrm{d}x\\
 &{}&=\frac{2^{2J\alpha}}{\Gamma(\beta)}\int_{0}^{d}\int_{0}^{x+k_2-k_1}(x-s+k_2-k_1)^{\beta-1} \phi^{\prime}\left(s\right)\,\mathrm{d}s\, \phi^{\prime}(x)\,\mathrm{d}x,
 \end{eqnarray*}
 which just depends on the value of $k_2-k_1$. The second part of (\ref{elliptic}) can be expressed by its first part, i.e.,
 \begin{equation}
  \left({}_xD_1^{-\beta/2}D\phi_{j,k_2}, {}_0D_x^{-\beta/2}D\phi_{j,k_1} \right)=\left({}_0D_x^{-\beta/2}D\phi_{j,k_1}, {}_xD_1^{-\beta/2}D\phi_{j,k_2}\right).
 \end{equation}
Then the desired result is obtained.
\end{proof}

 \begin{lemma}\label{shift:2}
 Let $\phi(x)$ and $\phi_{J,k}(x)$ be given as above, and $\phi(d/2-x)=\phi(d/2+x)$. Define $\theta_{J,i}(x):=2^{J/2}\theta_i(2^Jx)$ and $\tilde{\theta}_{J,i}(x):=2^{J/2}\theta_i(2^J(1-x))$ with $\theta_i(x)\in \mathcal{H}_0^1(\Omega)$ and {\rm supp}${\theta_i(x)}=[0,d_i]$, where $0<d_i<d$ and $i=1,2$. Then
 \begin{equation}
 \left({}_0D_x^{-\beta/2}D\theta_{J,i},\, {}_{x}D_1^{-\beta/2}D\phi_{J,k}\right)=\left({}_0D_x^{-\beta/2}D\phi_{J,2^J-d-k},\, {}_{x}D_1^{-\beta/2}D\tilde{\theta}_{J,i}\right).
 \end{equation}
 \end{lemma}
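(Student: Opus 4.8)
The plan is to exploit the reflection symmetry $x\mapsto 1-x$, which is precisely the map that interchanges the left and right fractional integral operators. Introduce the reflection operator $R$ defined by $(Rv)(x):=v(1-x)$. The crux of the whole argument is the pair of intertwining relations ${}_0D_x^{-\beta/2}R=R\,{}_xD_1^{-\beta/2}$ and ${}_xD_1^{-\beta/2}R=R\,{}_0D_x^{-\beta/2}$, together with the elementary chain-rule identity $DR=-RD$. I would establish the first relation by a single change of variables $\sigma=1-s$ in the defining integral (\ref{eq:1.3}): this turns the lower integration endpoint $0$ and upper endpoint $x$ into the endpoints $1-x$ and $1$, and turns the kernel $(x-s)^{\beta/2-1}$ into $(\sigma-(1-x))^{\beta/2-1}$, which is exactly the kernel of ${}_xD_1^{-\beta/2}$ evaluated at the reflected point $1-x$. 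This computation is the only nontrivial step; everything afterwards is bookkeeping.

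Next I would express the two objects appearing on the right-hand side through $R$. Directly from the definitions one has $\tilde{\theta}_{J,i}=R\theta_{J,i}$. For the interior function I would use the assumed symmetry $\phi(d/2-x)=\phi(d/2+x)$, equivalently $\phi(u)=\phi(d-u)$: writing out $(R\phi_{J,k})(x)=2^{J/2}\phi(2^J-k-2^Jx)$ and applying $\phi(u)=\phi(d-u)$ yields $(R\phi_{J,k})(x)=2^{J/2}\phi(2^Jx-(2^J-d-k))=\phi_{J,2^J-d-k}(x)$, i.e. $R\phi_{J,k}=\phi_{J,2^J-d-k}$.

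With these substitutions in hand I would rewrite the right-hand inner product by replacing $\phi_{J,2^J-d-k}=R\phi_{J,k}$ and $\tilde{\theta}_{J,i}=R\theta_{J,i}$, and then push each fractional operator through $R$ using the relations of the first paragraph. In the first factor, ${}_0D_x^{-\beta/2}D(R\phi_{J,k})=-\,{}_0D_x^{-\beta/2}R(D\phi_{J,k})=-R\big({}_xD_1^{-\beta/2}D\phi_{J,k}\big)$, where the minus sign comes from $DR=-RD$ and the operator swap from the intertwining relation; symmetrically, ${}_xD_1^{-\beta/2}D(R\theta_{J,i})=-R\big({}_0D_x^{-\beta/2}D\theta_{J,i}\big)$. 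The two minus signs cancel, so each factor of the right-hand integrand becomes $R$ applied to the corresponding factor of the left-hand integrand.

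Finally I would invoke the fact that $R$ is an $L^2(\Omega)$-isometry: by the same change of variables $x\mapsto 1-x$ one has $\int_0^1 (Rf)(x)(Rg)(x)\,\mathrm{d}x=\int_0^1 f(x)g(x)\,\mathrm{d}x$, so the integral of the product of the two reflected factors equals the integral of the product of the originals, which is exactly the left-hand inner product. The only point requiring mild care is the support bookkeeping needed to justify these formal manipulations: since $\theta_i$ is supported in $[0,d_i]$ with $d_i<d$ and $\phi$ in $[0,d]$, the functions $\theta_{J,i}$, $\tilde{\theta}_{J,i}$, and $\phi_{J,k}$ (for the admissible range $0\le k\le 2^J-d$) all have supports inside $\Omega$, and the homogeneous boundary conditions $\theta_i\in\mathcal{H}_0^1(\Omega)$, $\phi\in\mathcal{H}_0^{\alpha}(\Omega)$ guarantee the reflected functions remain in the correct spaces so that each operator application is well defined.
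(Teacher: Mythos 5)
Your proof is correct, and it takes a genuinely different route from the paper's. The paper proves the identity by brute-force change of variables: it first rewrites each side, via the adjoint/semigroup property of the fractional integrals, as an explicit double integral with kernel $(x+k-s)^{\beta-1}$ against $\theta_i'(s)\,\phi'(x)$, and then shows (using Fubini--Tonelli and the condition $\min\{2^J-k,d\}=d$) that both sides collapse to the same expression $\frac{2^{2J\alpha}}{\Gamma(\beta)}\int_0^d\int_0^{x+k}(x+k-s)^{\beta-1}\theta_i'(s)\,\mathrm{d}s\,\phi'(x)\,\mathrm{d}x$. You instead package the symmetry into the reflection operator $R$, prove the intertwining relations ${}_0D_x^{-\beta/2}R=R\,{}_xD_1^{-\beta/2}$ and $DR=-RD$ once, identify $R\phi_{J,k}=\phi_{J,2^J-d-k}$ and $R\theta_{J,i}=\tilde\theta_{J,i}$, and conclude by the $L^2$-isometry of $R$; the two minus signs cancel and the inner product is symmetric, which closes the argument. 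Your version is more structural: it never needs the adjoint property that converts the two half-order factors into a single $\beta$-order kernel, it makes the role of the hypothesis $\phi(d/2-x)=\phi(d/2+x)$ transparent, and the same three identities immediately yield the two companion relations the paper states (without proof) right after the lemma. What the paper's computation buys in exchange is the explicit closed-form double integral, which is the same expression exploited in Lemma~\ref{shift:1} and in the efficient assembly of the quasi-Toeplitz stiffness matrix, so it is not wasted work there. The only hypothesis you should make sure to invoke explicitly is $0\le k\le 2^J-d$ (so that $2^J-d-k\ge 0$ and ${\rm supp}\,\phi_{J,k}\subset[0,1]$), which is exactly where the paper uses $\min\{2^J-k,d\}=d$; you flag the support bookkeeping, so this is a presentational point rather than a gap.
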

 \begin{proof} Similar to Lemma \ref{shift:1}, it follows that
 \begin{eqnarray*}
 && \left({}_0D_x^{-\beta/2}D\theta_{J,i},\, {}_{x}D_1^{-\beta/2}D\phi_{J,k}\right) \\
 && =\frac{1}{\Gamma(\beta)}\int_0^1\int_0^x \left(x-s\right)^{\beta-1} \theta_{J,i}^{\prime}(s)\,\mathrm{d}s\, \phi^{\prime}_{J,k}(x)\,\mathrm{d}x \\
 && =\frac{2^{2J\alpha}}{\Gamma(\beta)}\int_{0}^{d}\int_{0}^{x+k}(x+k-s)^{\beta-1} \theta_i^{\prime}\left(s\right)\,\mathrm{d}s\, \phi^{\prime}(x)\,\mathrm{d}x.
 \end{eqnarray*}
By the properties of symmetry and compact support, there exists
 \begin{eqnarray*}
 &&\left({}_0D_x^{-\beta/2}D\phi_{J,2^J-d-k},\, {}_{x}D_1^{-\beta/2}D\tilde{\theta}_{J,i}\right)\\
          &&=\frac{1}{\Gamma(\beta)}\int_0^1\int_0^x \left(x-s\right)^{\beta-1} \phi_{J,2^J-d-k}^{\prime}(s)\,\mathrm{d}s\,\tilde{\theta}^{\prime}_{J,i}(x)\,\mathrm{d}x\\
       && =\frac{2^{2J}}{\Gamma(\beta)}\int_{d_i}^{0}\int_0^{1-2^{-J}x} \left(1-s-2^{-J}x\right)^{\beta-1} \phi^{\prime}\left(2^Js-2^J+d+k\right)\,\mathrm{d}s\, \theta_i^{\prime}(x)\,\mathrm{d}x\\
       && =\frac{2^{2J\alpha}}{\Gamma(\beta)}\int_{0}^{d_i}\int_{\max{\{0,x-k}\}}^{\min{\{2^J-k,d\}}} \left(s+k-x\right)^{\beta-1} \phi^{\prime}\left(s\right)\,\mathrm{d}s\, \theta_i^{\prime}(x)\,\mathrm{d}x\\
       && =\frac{2^{2J\alpha}}{\Gamma(\beta)}\int_{0}^{d}\int_{0}^{x+k}(x+k-s)^{\beta-1} \theta_i^{\prime}\left(s\right)\,\mathrm{d}s\, \phi^{\prime}(x)\,\mathrm{d}x,
 \end{eqnarray*}
 where the Fubini-Tonelli theorem and $\min{\left\{2^J-k,d\right\}}=d$ are used.
\end{proof}

It is also easy to check that for any $ i_1,i_2\in \{1,2\}$,
   \begin{eqnarray}
   &&\left({}_0D_x^{-\beta/2}D\theta_{J,i_1},\, {}_{x}D_1^{-\beta/2}D\theta_{J,i_2}\right)=\left({}_0D_x^{-\beta/2}D\tilde{\theta}_{J,i_2},\, {}_{x}D_1^{-\beta/2}D\tilde{\theta}_{J,i_1}\right),\\
   && \left({}_0D_x^{-\beta/2}D\phi_{J,k},\, {}_{x}D_1^{-\beta/2}D\theta_{J,i}\right)=\left({}_0D_x^{-\beta/2}D\tilde{\theta}_{J,i},\, {}_{x}D_1^{-\beta/2}D\phi_{J,2^J-d-k}\right).
   \end{eqnarray}
Now, from the structure of $\Phi_J$ \cite{Primbs:10} and the above lemmas, one knows that the matrix $A_l:=\left({}_0D_x^{-\beta/2}D\Phi_J, {}_xD_1^{-\beta/2}D\Phi_J\right)$ has a quasi-Toeplitz structure, that is, it is a Toeplitz matrix after removing very few rows and columns near the boundaries.
More precisely, for $d=2$, it is a full Toeplitz matrix, but for $d=3$ and $d=4$, they have the following structures, respectively,
\begin{eqnarray}
&& \left(\begin{array}{ccc}a_1&r(\boldsymbol {\mathrm{a_2}})^T&0\\ \boldsymbol {\mathrm{a_1}}&H_{(2^J-2)\times (2^J-2)}&\boldsymbol {\mathrm{a_2}}\\a_2&r(\boldsymbol {\mathrm{a_1}})^T&a_1\end{array}\right)_{2^J\times 2^J},\\
&& \left(\begin{array}{ccccc}a_1&a_2&r(\boldsymbol {\mathrm{a_1}})^T&0&0\\a_3&a_4& r(\boldsymbol{\mathrm{a_2}})^T&0&0\\\boldsymbol {\mathrm{a_3}}&\boldsymbol {\mathrm{a_4}}&H_{(2^J-3)\times(2^J-3)}&\boldsymbol{\mathrm{a_2}}&\boldsymbol{\mathrm{a_1}}\\a_5&a_6&r(\boldsymbol {\mathrm{a_4}})^T&a_4&a_2\\a_7&a_5&r(\boldsymbol {\mathrm{a_3}})^T&a_3&a_1\end{array}\right)_{(2^J+1)\times(2^J+1)}, \label{struction2}
\end{eqnarray}
where $a_i$ are real numbers; $\boldsymbol{\mathrm{a_i}}$ are vectors, $r(\boldsymbol{\mathrm{a_i}})$  the reverse order of $\boldsymbol{\mathrm{a_i}}$; and $H_{N\times N}$ is Toeplitz matrix.

The fact that the bases are obtained by dilating and translating of a single function and the symmetry of the bases are essential for obtaining the above results, recalling that they do not hold for the general finite element (except linear element) and spectral methods. For the high order finite difference methods, the similar results can be got after modifying the approximation near the boundary for recovering the desired accuracy \cite{Chen:15,Zhao:15}, but it seems that the general theoretical results (stability, convergence and so on) are hard to obtain.
Further results for the generated matrix are
\begin{eqnarray}
 && A_r:=\left({}_xD_1^{-\beta/2}D\Phi_J, {}_0D_x^{-\beta/2}D\Phi_J \right)=\left({}_0D_x^{-\beta/2}D\Phi_J, {}_xD_1^{-\beta/2}D\Phi_J\right)^T=A_l^T,\label{eq:3.14e}\\
&&\qquad~ \left({}_0D_x^{-\beta/2}D\phi_{J,k_1}, {}_xD_1^{-\beta/2}D\phi_{J,k_2}\right)=0 \quad \forall k_2-k_1\leq -d.\label{eq:3.15e}
\end{eqnarray}

\begin{remark}
The structure of $\Phi_J$  also allows one to compute  its Riemann-Liouville fractional derivative easily, which can greatly reduce the computational complexity of generating the differential matrix in Galerkin and collocation methods (see Section 5). As an example, we present the techniques for $d=4$ in Appendix, being similar for other values of $d$. Then combining with (\ref{struction2}) and (\ref{eq:3.15e}), the left differential matrix $A_l=\left({}_0D_x^{1-\beta}\Phi_J, D\Phi_J\right)$ can be calculated exactly or numerically with the cost $\mathcal{O}(N)$,  being superior to the traditional finite element and spectral approximation with the cost $\mathcal{O}(N^2)$ \cite{Ervin:07,Li:10}.
\end{remark}

\subsection{Multilevel Preconditioning}
For an algebraic system with dense matrix, a well convergent iterative method generally has the computational cost $\mathcal{O}(N\log(N))$ or $\mathcal{O}(N^2)$, which is much less than the cost $\mathcal{O}(N^3)$ of the direct method. Moreover, a well conditional  number and `bunching of eigenvalues' usually bring good numerical stability and fast convergence speed \cite{Benzi:02,Campos:95}. In general, for a linear system $Ax=b$, a satisfactory preconditioned system $Bx'=b'$ should have the property
  \[
  \|B\|\leq C, \quad \|B^{-1}\|\leq C, \quad \mbox{$C$ is a moderate-sized constant independent of $N$};
  \]
and the computational cost for the preconditioning step is cheap. Here, both the matrix $A_J$ and $\hat{A}_J$ are dense, and their condition numbers are of order $\mathcal{O}(2^{2J\alpha})$; see Table \ref{tab:2_1} for Example 5.2. But with the aid of   wavelet bases, by the norm equivalence, a simple diagonal scaling  can lead to a good preconditioned system.
In fact, define
\begin{equation}
\nabla^J=\triangle_{J_0}\cup\nabla_{J_0}\cup\cdots \cup\nabla_{J-1},
\end{equation}
\begin{equation}
~~~~K={\rm diag}\Big(\underbrace{2^{-J_0\alpha},\ldots,2^{-J_0\alpha}}_{\#\triangle_{J_0}},\underbrace{2^{-J_0\alpha},\ldots,2^{-J_0\alpha}}_{\#\nabla_{J_0}},
        \ldots,\underbrace{2^{-(J-1)\alpha},\ldots,2^{-(J-1)\alpha}}_{\#\nabla_{J-1}}\Big).
\end{equation}
Combining the ellipticity (\ref{coervcice}), norm equivalence (\ref{normequiv}), and the Riesz representation theorem, one gets that for all ${\boldsymbol x}\in l_2(\nabla^J)$,
\begin{eqnarray}
&&\|K\hat{A}_JK{\boldsymbol x}\|_{l_2(\nabla^J)}=\sup_{{\boldsymbol y}\in l_2(\nabla^J)}\frac{\langle K\hat{A}_JK{\boldsymbol x},\, {\boldsymbol y}\rangle_{l_2(\nabla^J)}}{\|{\boldsymbol y}\|_{l_2(\nabla^J)}}\nonumber \\
                   &&=\sup_{{\boldsymbol y}\in l_2(\nabla^J)}\frac{a({\boldsymbol x}^TK\Phi^J,\, {\boldsymbol y}^TK\Phi^J)}{\|{\boldsymbol y}\|_{l_2(\nabla^J)}}
					\stackrel{<}{\sim}\frac{\|{\boldsymbol x}^TK\Phi^J\|_{\alpha}\|{\boldsymbol y}^TK\Phi^J\|_{\alpha}}{\|{\boldsymbol
                   y}\|_{l_2(\nabla^J)}}\stackrel{<}{\sim}\|{\boldsymbol x}\|_{l_2(\nabla^J)},\nonumber
\end{eqnarray}
\begin{equation}
 \|K\hat{A}_JK{\boldsymbol x}\|_{l_2(\nabla^J)}\stackrel{>}{\sim}\frac{\|{\boldsymbol x}^TK\Phi^J\|_{\alpha}\|{\boldsymbol x}^TK\Phi^J\|_{\alpha}}{\|{\boldsymbol x}\|_{l_2(\nabla^J)}}
														\stackrel{>}{\sim}\|{\boldsymbol x}\|_{l_2(\nabla^J)}.
\end{equation}
Therefore, there exist $C_1,C_2$ not depending on $J$ such that
\begin{equation} \label{cond1}
 \qquad \quad C_1\|{\boldsymbol x}\|_{l_2(\nabla^J)}\le\|K\hat{A}_JK{\boldsymbol x}\|_{l_2(\nabla^J)}\le C_2 \|{\boldsymbol x}\|_{l_2(\nabla^J)}.
\end{equation}
Now, one arrives at
\begin{eqnarray}
 &&\|K\hat{A}_JK\|\stackrel{<}{\sim}C_2,\ \, \|(K\hat{A}_JK)^{-1}\|\stackrel{<}{\sim}(1/C_1),\\
 && {\rm cond}_2(K\hat{A}_JKu_J)=\|K\hat{A}_JK\|\|(K\hat{A}_JK)^{-1}\|\stackrel{<}{\sim} ({C_2}/{C_1}).\label{cond2}
\end{eqnarray}

The norm equivalence implies $ a(\psi_{j,k},\psi_{j,k})\sim 2^{2js}$, so one can also define matrix $K$ by the inverse  square root of the diagonal of $\hat{A}_J$, and (\ref{cond1}) and (\ref{cond2}) still hold. Usually the current $K$ performs better since it uses the information directly from the stiffness matrix, and we will use it in Section 5. Moreover, the cost of generating $K$ is only $\mathcal{O}(J)$; this is because that by using the translation property of the inner wavelet on the same level, one just needs to calculate the entries $a(\psi_{j,k},\psi_{j,k})$ near the boundaries and one in the inner part without the necessity to assemble $\hat{A}_J$.

Now, one can rewrite (\ref{eq:3.2}) as the two-sided preconditioned form	
\begin{equation}\label{eq:3.9}
 \underbrace{K\hat{A}_JK}K^{-1}{\boldsymbol d}_J=K\hat{F}_J.
\end{equation}
Further using (\ref{eq:2.16}), one gets that
\begin{equation}\label{eq:3.10}
 \underbrace{KM^TA_JMK}K^{-1}M^{-1}{\boldsymbol c}_J=KM^T F.
\end{equation}
 A straightforward product of $A_J$ or $\hat{A}_J$ to a given vector needs a computational cost $\mathcal{O}(2^{2J})$. But if one uses the quasi-Toeplitz structure of the matrix, the computational cost can be reduced to $\mathcal{O}(J2^J)$.
 In fact,  one can rewrite $A_J$ as
\begin{equation}
 A_J={\rm diag}(K_1)A_l+{\rm diag}(K_2)A_r,
\end{equation}
where $K_1$ and $K_2$ denote the coefficient vectors, formed by the coefficient of space fractional derivative taking values at the discretized intervals, and $A_l$ and $A_r$ are  quasi-Toeplitz matrices.  Using the FFT to the matrix-vector product makes the computational cost $\mathcal{O}(J2^J)$ \cite{Wang:10}.
 Finally, because the FWT (having the matrix representation  $M$ or $M^T$, which denotes the primal reconstruction or the dual decomposition \cite{Urban:09}) can be implemented with the cost $\mathcal{O}(2^J)$, if the CG scheme (symmetric) is applied to (\ref{eq:3.10}) or to the corresponding normal equation (asymmetric), the well conditioned number of the matrix implies that the convergence rate is independent of the level $J$; then we can solve it with the total operations $\mathcal{O}(J2^J)$. For the general iterative schemes, such as GMRES or Bi-CGSTAB, usually one can show that the system with clustered spectrum and well conditioned number after preconditioning has an accelerated convergence. What's more, compared with the most existing preconditioners which require the solving of a linear system (see, e.g., the ILU \cite{Lin:14} and the Strang \cite{Lei:13}), the wavelet preconditioning operation reduces to the matrix-vector product, where FWT can be used.

\subsection{Multiresolution Multigrid Method}

The multigrid method based on the finite difference discretization for solving the fractional IBVPs have been developed in \cite{Chenming:15, Pang:12}, where the transition operators (restriction and prolongation operators) between the grids are chosen as the full weight and interpolation operators. In this Subsection, we investigate the MMG method for solving fractional IBVPs. We will show that the transition operators in the MRA background can be more straightforwardly defined. And using the techniques presented in the above content, the MMG scheme can also be fast implemented.

Denoting $S_j$ as the subspace, $\mathcal{A}_j: S_j\rightarrow S_j$ with $(\mathcal{A}_j\omega_j,v_j):=a(\omega_j,v_j)~ \forall v_j\in S_j$ and $\mathcal{Q}_j: L^2\rightarrow S_j$ with $(\mathcal{Q}_j\rho,v_j)=(\rho,v_j) ~ \forall v_j \in S_j$, one arrives at the semidiscrete form: Find $u_J(t)\in S_J, t\ge 0$ such that
\begin{equation}\label{semi:1}
 \left\{
 \begin{array}{l}
 \frac{\partial{u_J}}{\partial{t}}+\mathcal{ A_J} u_J=f_J(t):=\mathcal{ Q_J}f(t) \\
 u_J(0)=u_J^0\in S_J.
 \end{array} \right.
 \end{equation}
Taking the time mesh as $0\equiv t_0<t_1<\cdots<t_{N-1}<t_N\equiv T$ and the stepsizes $\Delta t_n=t_{n+1}-t_n,\, n=0,\ldots,N-1$, one gets the backward Euler multiresolution Galerkin method (B-MGM)
\begin{equation}
(U_J^{n+1},v_J)+\triangle t_n a(U^{n+1}_J,v_J)=(U^n_J+\triangle t_n f(t_{n+1}),v_J) \quad \forall v_J\in S_J;
\end{equation}
and the Crank-Nicolson multiresolution Galerkin method (CN-MGM)
\begin{equation}
(\overline{\partial}U^{n+1}_J,v_J)+a\left(\frac{U^{n+1}_J+U^n_J}{2},v_J\right)=(f(t_{n+1/2}),v_J)\quad \forall v_J\in S_J,
\end{equation}
where $\overline{\partial}U_J^{n+1}=(U_J^{n+1}-U_J^n)/\triangle t_n$.
Introduce the bilinear form $B_{n+1}(u,v):=(u,v)+\lambda\triangle t_n a(u,v)$, where $\lambda=1$ for the B-MGM and $\lambda=1/2$ for the CN-MGM. Define $\mathcal{B}_j^{n+1}: S_j\to S_j$ with $(\mathcal{B}_j^{n+1}\rho_j,v_j)=B_{n+1}(\rho_j,v_j)$ $\forall v_j\in S_j$, and the operator $P^{n+1}_j:\mathcal{H}^{\alpha}_0(\Omega)\to S_j$ with   $B_{n+1}(P^{n+1}_j\rho,v_j)=B_{n+1}(\rho,v_j)$ $\forall v_j\in S_j$. Then the MGM schemes can be rewritten uniformly as the form
\begin{equation}\label{MultiGrid}
\mathcal{B}_J^{n+1}U_J^{n+1}=g_J^{n+1},
\end{equation}
where $g_J^{n+1}:=U^n_J+\triangle t_nQ_Jf(t_{n+1})$ for the  B-MGM and $g_J^{n+1}:=-\frac{\triangle t_n}{2}\mathcal{A}_JU^n_J+U^n_J+\triangle t_nQ_Jf(t_{n+1/2})$ for the  CN-MGM, respectively. Suppose that $U_J=\sum_{k\in \triangle_J}c_{J,k}\phi_{J,k}\in S_J$, and define ${\boldsymbol c_J},\tilde{g}_J\in \mathbb{R}^{\#(\triangle_J)}$, $({\boldsymbol c_J})_k:=c_{J,k},\, (\tilde{g}_J)_k:=(g_J,\phi_{J,k}),\,k\in \triangle_J$. Denoting $B^{n+1}_J= \left( B_{n+1}(\phi_{J,k},\phi_{J,i}) \right)_{k,i\in \triangle_J}$, one also gets the algebraic representation of (\ref{MultiGrid}) given by $B_J^{n+1}{\boldsymbol c_J^{n+1}}=\tilde{g}_J^{n+1}$.

The basic iteration algorithm for the operator equation (\ref{MultiGrid}) is \cite{Xu:97}
\begin{equation}
U_J^{n+1,l+1}=U_J^{n+1,l}+R_J^{n+1}\left(g_J^{n+1}-\mathcal{B}_J^{n+1}U_J^{n+1,l}\right),\quad l=0,1,2,\dots
\end{equation}
with the error propagation operator $\mathcal{K}_J^{n+1}:=I-R_J^{n+1}\mathcal{B}_J^{n+1}$ and the iterator $R_J^{n+1}:S_J\to S_J$. For the damped Richardson and Jacobi methods, the iterators are, respectively, given by
\begin{eqnarray}
R_J^{n+1}g&=&\omega\sigma(B_J^{n+1})^{-1}\sum_{k\in \triangle_J}\left(g,\phi_{J,k}\right)\phi_{J,k}\quad \forall g\in S_J;\\
R_J^{n+1}g&=&\omega\sum_{k\in \triangle_J}B_{n+1}\left(\phi_{J,k},\phi_{J,k}\right)^{-1}\left(g,\phi_{J,k}\right)\phi_{J,k} \quad \forall g\in S_J;
\end{eqnarray}
 they can also be regarded as the correction with subspaces decomposition $S_j=\sum_{k \in \triangle_j}V_j^k$ with $V_j^k={\rm span}\{\phi_{j,k}\}$; and one can also rewrite the Jacobi iterator as
\begin{equation}
 R_j^{n+1}=\omega \sum_{k \in \triangle_j}P_j^{n+1,k}(\mathcal{B}_j^{n+1})^{-1},
\end{equation}
where $P_j^{n+1,k}:S_j\to V_j^k$ with $B_{n+1}(P_j^{n+1,k}v_j,\phi_{j,k})=B_{n+1}(v_j,\phi_{j,k})~ \forall v_j\in S_j$. Now, the MMG $V$-cycle algorithm for (\ref{MultiGrid}) reads
 \begin{equation}
 U_{J}^{n+1,l+1}=U_{J}^{n+1,l}+\mathcal{M}_J^{n+1}\left(g_J^{n+1}-\mathcal{B}_J^{n+1}U_{J}^{n+1,l}\right),\quad l=0,1,2,\dots
 \end{equation}
 and the multigrid iterator $\mathcal{M}_J^{n+1}$ is defined in Algorithm \ref{V-CYCLE} by induction, where $R_j^{n+1}$ and  $\mathcal{K}_{j}^{n+1}: S_j \to S_j$ are defined in the same way as $R_J^{n+1}$ and $\mathcal{K}_{J}^{n+1}$.
\begin{algorithm}[h t b p]
\caption{MMG V-CYCLE ITERATOR}
\label{V-CYCLE}
\begin{algorithmic}[1]
\STATE  Fix $t=t_{n+1}$; for $j=J_0$, define $\mathcal{M}_{J_0}^{n+1}=(\mathcal{B}_{J_0}^{n+1})^{-1}$. Assume that $\mathcal{M}_{j-1}^{n+1}:S_{j-1}\to S_{j-1}$ is defined. For $g\in S_j$, define the iterator $\mathcal{M}_{j}^{n+1}:S_{j}\to S_{j}$ through the following steps:
\renewcommand{\baselinestretch}{1.5}
\large\normalsize
\STATE $(1)$ Pre-smoothing: For $x^{n+1}_0=0\in S_j$ and $l=1,\dots,m_1(j)$,

  $\qquad \qquad \qquad\quad x^{n+1}_l=x^{n+1}_{l-1}+R_j^{n+1}\left(g-\mathcal{B}_j^{n+1}x_{l-1}^{n+1}\right)$

\STATE $(2)$ Coarse grid correction:

$\qquad\qquad \qquad \quad x_{m_1(j)+1}^{n+1}=x_{m_1(j)}^{n+1}+\mathcal{M}_{j-1}^{n+1}Q_{j-1}\left(g-\mathcal{B}_j^{n+1}x_{m_1(j)}^{n+1}\right)$

\STATE $(3)$ Post-smoothing: For $l=m_1(j)+2,\dots,m_1(j)+m_2(j)+1$,

$\qquad\qquad\qquad \quad x^{n+1}_l=x^{n+1}_{l-1}+R_j^{n+1}\left(g-\mathcal{B}_j^{n+1}x_{l-1}^{n+1}\right)$

\STATE Define $\mathcal{M}_j^{n+1}g=x^{n+1}_{m_1(j)+m_2(j)+1}$
\renewcommand{\baselinestretch}{1}
\large\normalsize
\end{algorithmic}
\end{algorithm}
Obviously, the  MMG error propagation operator satisfies
\begin{eqnarray}\label{MMG-Err}
~~~I-\mathcal{M}_{j+1}^{n+1}\mathcal{B}_{j+1}^{n+1}&=&\left(\mathcal{K}_{j+1}^{n+1}\right)^{m_2(j+1)}\left(I-\mathcal{M}_{j}^{n+1}Q_{j}\mathcal{B}_{j+1}^{n+1}\right)\left(\mathcal{K}_{j+1}^{n+1}\right)^{m_1(j+1)}\\\nonumber
&=&\underbrace{\left(\mathcal{K}_{j+1}^{n+1}\right)^{m_2(j+1)}\left(I-P_j^{n+1}\right)\left(\mathcal{K}_{j+1}^{n+1}\right)^{m_1(j+1)}}_{\mathbb{ I}}\\ \nonumber &&+\underbrace{\left(\mathcal{K}_{j+1}^{n+1}\right)^{m_2(j+1)}\left(I-\mathcal{M}_j^{n+1}\mathcal{B}_{j}^{n+1}\right)P_j^{n+1}\left(\mathcal{K}_{j+1}^{n+1}\right)^{m_1(j+1)}}_{\mathbb{ II}}, \nonumber
\end{eqnarray}
where the relation $Q_{j}\mathcal{B}_{j+1}^{n+1}=\mathcal{B}_{j}^{n+1}P_j^{n+1}$ has been used;  ${\mathbb{I}}$  just denotes the usual two-grid error propagation operator; and $m_1(j+1)$ (or $m_2(j+1)$) means that the iterative times $m_1$ (or $m_2$) may depend on the level $j+1$.

Recall that the refinement relation (\ref{Refine-raltion}), one can get the prolongation matrix $M_{j,0}$ straightforward, and it holds
\begin{equation}
\left\{\begin{array}{l}
{\boldsymbol c^{n+1}_{j+1}}=M_{j,0}{\boldsymbol c^{n+1}_j} \quad \forall U^{n+1}_{j+1}=U^{n+1}_j\in S_j\subset S_{j+1};\\[5pt]
 \widetilde{Q_jr^{n+1}_{j+1}}=M_{j,0}^T\widetilde{r^{n+1}_{j+1}} \quad \forall r^{n+1}_{j+1}\in S_{j+1},
 \end{array}
 \right.
\end{equation}
where the meaning of $\widetilde{r^{n+1}_{j+1}}$ is defined after Eq. (\ref{MultiGrid}).
This means that the  transpose of $M_{j,0}$ is just the restriction matrix. Noticing that $\mathcal{B}_j^{n+1}U^{n+1}_j=Q_j\mathcal{B}_{j+1}^{n+1}U^{n+1}_j~ \forall U^{n+1}_j\in S_j$, there holds
\begin{equation}
B_j^{n+1}{\boldsymbol c_j^{n+1}}=\widetilde{\mathcal{B}_j^{n+1}U^{n+1}_j}=M_{j,0}^T\widetilde{\mathcal{B}_{j+1}^{n+1}U^{n+1}_j}=M_{j,0}^TB_{j+1}^{n+1}M_{j,0}{\boldsymbol c_j^{n+1}},
\end{equation}
i.e.,
\begin{equation}
 B_j^{n+1}=M_{j,0}^TB_{j+1}^{n+1}M_{j,0},
 \end{equation}
which actually is the Galerkin identity, facilitating the convergence analysis, but this is not true for the difference method.
Note that the quasi-Topelitz structure of $B_j^{n+1}$ makes it feasible to be generated directly with the cost $\mathcal{O}(2^j)$. Using the fast algorithms (FFT and FWT), the matrix-vector product can preformed with the cost $\mathcal{O}(j2^j)$. So the total computational count per MMG step is $\mathcal{O}(J2^J)$ and the storage cost is $\mathcal{O}(2^J)$.

In the following, we present the convergence analysis of the MMG when $\bf{A}$ is a Riesz derivative and $m_1(j)=m_2(j)=m_0$; see Algorithm 1.  It is easy to check that $\mathcal{B}_j^{n+1}$ is symmetric, $P^{n+1,k}_{j}$ and $P_j^{n+1}$ are $A$-orthogonal projectors, and  $\mathcal{K}_j^{n+1}$ and $I-\mathcal{M}_j^{n+1}\mathcal{B}_j^{n+1}$ are $A$-selfadjoint; all of them are considered with respect to $B_{n+1}(\cdot,\cdot)$.
\begin{lemma} [see \cite{Chen:10}] \label{MMG-lemma}
Assume that $R_j^{n+1}:S_j\to S_j$ is symmetric with respect to $(\cdot,\cdot)$, positive semi-definite, and satisfies
\begin{equation}\label{MMG-cond}
\left\{
\begin{array}{l}
 B_{n+1}\left(\mathcal{K}_j^{n+1}v_j,v_j\right)\ge 0 \quad \forall v_j\in S_j, \\[5pt]
 \left(\left(R_j^{n+1}\right)^{-1}v_j,v_j\right)\le {\epsilon}B_{n+1}\left(v_j,v_j\right) \quad \forall v_j\in \left(I-P_{j-1}^{n+1}\right) S_j.
 \end{array} \right.
 \end{equation}
 Then we have
 \begin{equation}\label{MMG-coveg}
    0\le B_{n+1}\left(\left(I-\mathcal{M}_j^{n+1}\mathcal{B}_j^{n+1}\right)v_j,v_j\right)\le \delta B_{n+1}\left(v_j,v_j\right)\quad \forall v_j\in S_j,
 \end{equation}
 where $\delta=\epsilon/(\epsilon+2m_0)$.
\end{lemma}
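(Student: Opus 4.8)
The plan is to induct on the level $j$, working throughout in the energy inner product $B_{n+1}(\cdot,\cdot)$ and writing $\|v\|_{n+1}^2:=B_{n+1}(v,v)$ for the associated norm. The base level $j=J_0$ is immediate: since $\mathcal{M}_{J_0}^{n+1}=(\mathcal{B}_{J_0}^{n+1})^{-1}$ we have $I-\mathcal{M}_{J_0}^{n+1}\mathcal{B}_{J_0}^{n+1}=0$, so both inequalities in (\ref{MMG-coveg}) hold trivially. For the inductive step I would assume (\ref{MMG-coveg}) at level $j$ and deduce it at level $j+1$. Abbreviating $\mathcal{K}:=\mathcal{K}_{j+1}^{n+1}$, $P:=P_j^{n+1}$, and $E_i:=I-\mathcal{M}_i^{n+1}\mathcal{B}_i^{n+1}$, I would first rewrite the recursion (\ref{MMG-Err}), using $Q_j\mathcal{B}_{j+1}^{n+1}=\mathcal{B}_j^{n+1}P$, in the compact form $E_{j+1}=\mathcal{K}^{m_0}\big[(I-P)+E_jP\big]\mathcal{K}^{m_0}$. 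Because $\mathcal{K}$, $P$, and $E_j$ are $B_{n+1}$-selfadjoint, so is $E_{j+1}$; fixing $v\in S_{j+1}$, setting $w:=\mathcal{K}^{m_0}v$, and exploiting the $B_{n+1}$-orthogonal splitting $w=Pw+(I-P)w$, I would arrive at
\[
B_{n+1}(E_{j+1}v,v)=\|(I-P)w\|_{n+1}^2+B_{n+1}\big(E_j(Pw),Pw\big).
\]

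From here the lower bound in (\ref{MMG-coveg}) is free: the first term is nonnegative and, by the induction hypothesis applied to $Pw\in S_j$, so is the second. For the upper bound I would insert the induction hypothesis $B_{n+1}(E_j(Pw),Pw)\le\delta\|Pw\|_{n+1}^2$ and the Pythagorean identity $\|w\|_{n+1}^2=\|(I-P)w\|_{n+1}^2+\|Pw\|_{n+1}^2$; since $\delta=\epsilon/(\epsilon+2m_0)$ satisfies $\delta/(1-\delta)=\epsilon/(2m_0)$, the target $B_{n+1}(E_{j+1}v,v)\le\delta\|v\|_{n+1}^2$ collapses to the single smoothing estimate
\begin{equation}\label{key-smoothing}
\|(I-P)\mathcal{K}^{m_0}v\|_{n+1}^2\le\frac{\epsilon}{2m_0}\big(\|v\|_{n+1}^2-\|\mathcal{K}^{m_0}v\|_{n+1}^2\big).
\end{equation}
Establishing (\ref{key-smoothing}) is the crux, and it is where both hypotheses of (\ref{MMG-cond}) get used.

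To prove (\ref{key-smoothing}) I would telescope over the sweep. Writing $v_l:=\mathcal{K}^l v$ and $d_l:=\|v_l\|_{n+1}^2-\|v_{l+1}\|_{n+1}^2$, I would first note that the first condition of (\ref{MMG-cond}) together with positive semidefiniteness of $R_{j+1}^{n+1}$ makes $\mathcal{K}$ a $B_{n+1}$-selfadjoint contraction, $0\le\mathcal{K}\le I$, whence the per-step drops are monotone, $d_{m_0-1}\le d_l$ for $l\le m_0-1$ (write $d_l=B_{n+1}((I-\mathcal{K}^2)v_l,v_l)$ and commute $\mathcal{K}$ through $(I-\mathcal{K}^2)^{1/2}$). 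It then suffices to prove the single last-step bound $\|(I-P)\mathcal{K}^{m_0}v\|_{n+1}^2\le\tfrac{\epsilon}{2}\,d_{m_0-1}$, because summing the resulting $\|(I-P)\mathcal{K}^{m_0}v\|_{n+1}^2\le\tfrac{\epsilon}{2}d_l$ over $l=0,\dots,m_0-1$ reproduces (\ref{key-smoothing}). For that last-step bound I would put $z:=(I-P)\mathcal{K}^{m_0}v\in(I-P)S_{j+1}$, use the projection identity $\|z\|_{n+1}^2=B_{n+1}(z,\mathcal{K}^{m_0}v)$, and apply Cauchy--Schwarz in the $R_{j+1}^{n+1}$-inner product; the second condition of (\ref{MMG-cond}) applies precisely because $z$ lies in $(I-P)S_{j+1}$ and yields $((R_{j+1}^{n+1})^{-1}z,z)\le\epsilon\|z\|_{n+1}^2$, while the first condition---equivalent to $R_{j+1}^{n+1}\le(\mathcal{B}_{j+1}^{n+1})^{-1}$---bounds the complementary factor by $\tfrac12 d_{m_0-1}$.

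I expect the main obstacle to be exactly this last constant-tracking: after Cauchy--Schwarz one is left with the smoothing mass $(R_{j+1}^{n+1}\mathcal{B}_{j+1}^{n+1}\mathcal{K}^{m_0}v,\,\mathcal{B}_{j+1}^{n+1}\mathcal{K}^{m_0}v)$, and one must show it is at most half of $d_{m_0-1}$; expanding through $\mathcal{K}=I-R_{j+1}^{n+1}\mathcal{B}_{j+1}^{n+1}$ and invoking $R_{j+1}^{n+1}\le(\mathcal{B}_{j+1}^{n+1})^{-1}$ several times produces the needed cancellation and, with it, the factor $2$ in $\delta=\epsilon/(\epsilon+2m_0)$. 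Once this estimate is in hand the induction closes, and everything else---selfadjointness, the orthogonal splitting, and the telescoping---is routine manipulation in the energy inner product.
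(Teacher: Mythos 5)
The paper does not prove this lemma at all: it is imported verbatim from the cited reference \cite{Chen:10}, and the surrounding text only verifies that the Richardson and Jacobi iterators satisfy the hypotheses (\ref{MMG-cond}). So there is no in-paper argument to compare against; judged on its own, your proposal is the standard Braess--Hackbusch/Bramble--Pasciak V-cycle induction and it is correct. The recursion $E_{j+1}=\mathcal{K}^{m_0}\big[(I-P)+E_jP\big]\mathcal{K}^{m_0}$, the energy-orthogonal splitting giving $B_{n+1}(E_{j+1}v,v)=\|(I-P)w\|_{n+1}^2+B_{n+1}(E_jPw,Pw)$, and the reduction of the upper bound to the smoothing estimate $\|(I-P)\mathcal{K}^{m_0}v\|_{n+1}^2\le\frac{\epsilon}{2m_0}\big(\|v\|_{n+1}^2-\|\mathcal{K}^{m_0}v\|_{n+1}^2\big)$ via $\delta/(1-\delta)=\epsilon/(2m_0)$ are all sound. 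The one computation you defer does close: writing $\mathcal{K}=\mathcal{K}_{j+1}^{n+1}$, $R=R_{j+1}^{n+1}$, $\mathcal{B}=\mathcal{B}_{j+1}^{n+1}$, the hypotheses give $0\le\mathcal{K}\le I$ in the $B_{n+1}$-inner product, and since $(R\mathcal{B}v_{m_0},\mathcal{B}v_{m_0})=B_{n+1}\big((I-\mathcal{K})\mathcal{K}^2v_{m_0-1},v_{m_0-1}\big)$ while $d_{m_0-1}=B_{n+1}\big((I-\mathcal{K})(I+\mathcal{K})v_{m_0-1},v_{m_0-1}\big)$, the needed factor of $2$ follows from the operator identity $\tfrac12(I-\mathcal{K})(I+\mathcal{K})-(I-\mathcal{K})\mathcal{K}^2=\tfrac12(I-\mathcal{K})^2(I+2\mathcal{K})\ge0$ (a product of commuting $B_{n+1}$-selfadjoint nonnegative factors); combined with your Cauchy--Schwarz step $\|z\|_{n+1}^2\le\epsilon\,(R\mathcal{B}v_{m_0},\mathcal{B}v_{m_0})$ this yields $\|(I-P)\mathcal{K}^{m_0}v\|_{n+1}^2\le\tfrac{\epsilon}{2}d_{m_0-1}$, and your monotonicity-plus-telescoping argument then delivers the constant $\epsilon/(2m_0)$ exactly as required for $\delta=\epsilon/(\epsilon+2m_0)$. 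The only caveat worth recording is that the Cauchy--Schwarz in the $R$-inner product presupposes $R$ invertible (positive definite rather than merely semi-definite), but that is already implicit in the hypothesis $\big((R_j^{n+1})^{-1}v_j,v_j\big)\le\epsilon B_{n+1}(v_j,v_j)$, so nothing is lost.
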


Since $I-\mathcal{M}_J^{n+1}\mathcal{B}_J^{n+1}$ is $A$-selfadjoint, (\ref{MMG-coveg}) actually means that its spectral radius
\begin{eqnarray}
\sigma\left(I-\mathcal{M}_J^{n+1}\mathcal{B}_J^{n+1}\right)&=&\left\|I-\mathcal{M}_J^{n+1}\mathcal{B}_J^{n+1}\right\|_{A}\\\nonumber
&=&\sup_{0\ne v\in S_J}\frac{B_{n+1}\left(\left(I-\mathcal{M}_J^{n+1}\mathcal{B}_J^{n+1}\right)v,v\right)}{B_{n+1}\left(v,v\right)}\le \delta<1.
\end{eqnarray}
 When $\omega\in [c_0,1],\, 0<c_0\le 1$, the Richardson method obviously satisfies the requirements of Lemma \ref{MMG-lemma}. Since the damped Jacobi iteration converges under the condition $0<\omega<2/\sigma(R_j^{n+1}\mathcal{B}_j^{n+1})$, for any $v_j\in S_j$, there exists
\begin{eqnarray*}
   &\,&B_{n+1}\left(\mathcal{K}_j^{n+1}v_j,\mathcal{K}_j^{n+1}v_j\right)\\
   &\,&\ =B_{n+1}\left(v_j,v_j\right)-2\omega B_{n+1}\left(R_j^{n+1}\mathcal{B}_j^{n+1}v_j,v_j\right)+
   \omega^2B_{n+1}\left(R_j^{n+1}\mathcal{B}_j^{n+1}v_j,R_j^{n+1}\mathcal{B}_j^{n+1}v_j\right)\\
   &\,&\ =B_{n+1}\left(v_j,v_j\right)-2\omega\left((R_j^{n+1})^{\frac{1}{2}}\mathcal{B}_j^{n+1}v_j,(R_j^{n+1})^{\frac{1}{2}}\mathcal{B}_j^{n+1}v_j\right)\\
   &\,&\ \quad\qquad\qquad+\,\omega^2\left(\left[(R_j^{n+1})^{\frac{1}{2}}\mathcal{B}_j^{n+1}(R_j^{n+1})^{\frac{1}{2}}\right]
   (R_j^{n+1})^{\frac{1}{2}}\mathcal{B}_j^{n+1}v_j,(R_j^{n+1})^{\frac{1}{2}}\mathcal{B}_j^{n+1}v_j\right)\\
   &\,&\ \le B_{n+1}\left(v_j,v_j\right)-\omega\left(2-\omega\sigma(R_j^{n+1}\mathcal{B}_j^{n+1})\right)\left((R_j^{n+1})^{\frac{1}{2}}\mathcal{B}_j^{n+1}v_j,(R_j^{n+1})^{\frac{1}{2}}\mathcal{B}_j^{n+1}v_j\right).
\end{eqnarray*}
Then it is sufficient to take $0<\omega<1/\sigma(R_j^{n+1}\mathcal{B}_j^{n+1})$ for getting the first condition in (\ref{MMG-cond}).
 \begin{eqnarray*}
     \left((R_j^{n+1})^{-1}v_j,v_j\right)&=&\sum_{k\in \triangle_j}\left(\left(R_j^{n+1}\right)^{-1}v_j,c_{j,k}\phi_{j,k}\right)
                                                               =\sum_{k\in \triangle_j}B_{n+1}\left(\mathcal{E}_j^{n+1}v_j,c_{j,k}\phi_{j,k}\right)\\
     &\le&\sqrt{\sum_{k\in \triangle_j}B_{n+1}\left(\mathcal{E}_j^{n+1}v_j,\mathcal{E}_j^{n+1}v_j\right)}\sqrt{\sum_{k\in \triangle_j}B_{n+1}\left(c_{j,k}\phi_{j,k},c_{j,k}\phi_{j,k}\right)}\\
     &=&\sqrt{\sum_{k\in \triangle_j}\left(\mathcal{E}_j^{n+1}v_j,(R_j^{n+1})^{-1}v_j\right)}\sqrt{\sum_{k\in \triangle_j}B_{n+1}\left(c_{j,k}\phi_{j,k},c_{j,k}\phi_{j,k}\right)}\\
     &=&\sqrt{\frac{1}{\omega}\left((R_j^{n+1})^{-1}v_j,v_j\right)}\sqrt{\sum_{k\in \triangle_j}B_{n+1}\left(c_{j,k}\phi_{j,k},c_{j,k}\phi_{j,k}\right)},
 \end{eqnarray*}
where $ v_j=\sum\limits_{k\in\triangle_j}c_{j,k}\phi_{j,k}\in S_j$ and $\mathcal{E}_j^{n+1}:=P_j^{n+1,k}(\mathcal{B}_j^{n+1})^{-1}(R_j^{n+1})^{-1}$.

By Bernstein estimate and  uniform stability given in (\ref{eq:Jackson}) and (\ref{eq:uniform stable}), one gets
\begin{equation}
\qquad\sum_{k\in \triangle_j}B_{n+1}\left(c_{j,k}\phi_{j,k},c_{j,k}\phi_{j,k}\right)\stackrel{<}{\sim}2^{2j\alpha}\sum_{k\in \triangle_j}\|c_{j,k}\phi_{j,k}\|^2_{L_2(\Omega)}\sim 2^{2j\alpha}\|v_j\|^2_{L_2(\Omega)}.
\end{equation}
By the Aubin-Nitscale trick \cite{Ervin:05,Deng:08,Huang:14}, there exists
\begin{eqnarray}
\left\|(I-P_{j-1}^{n+1})v_j\right\|^2_{L_2(\Omega)}\stackrel{<}{\sim}2^{-2j\alpha}\left\|(I-P_{j-1}^{n+1})v_j\right\|^2_{\mathcal{H}^{\alpha}{(\Omega)}}
\sim 2^{-2j\alpha}B_{n+1}\left((I-P_{j-1}^{n+1})v_j,(I-P_{j-1}^{n+1})v_j\right).
\end{eqnarray}
Noting that $(I-P_{j-1}^{n+1})v_j\in S_j$, then the second requirement in (\ref{MMG-cond}) holds. The proof of the convergence for MMG with Jacobi iterator is completed.

\section{Multiscale Adaptive Schemes}

Although there are works  to discuss the low regularity (especially the weaker regularity at the area close to boundary) of the solutions for fractional PDEs, it seems few of them are for designing the adaptive algorithms, which have been well developed for classical PDEs. In  the classical finite elements approximation,  adaptivity is usually driven by so-called local a posteriori error estimate (an efficient and reliable error indicator consisting of local terms and being easy to compute).  In the following  we first show the challenge when using the traditional finite element method to adaptively solve the fractional BVPs, then  numerically demonstrate the performance of the wavelet adaptation algorithm.


 For the linear element and  $\beta\in (1/2,1)$, with a given positive integer $K$, we define the mesh
 $$ 0\equiv x_0<x_1<\cdots x_{K-1}<x_K\equiv 1,\qquad I_i=(x_{i-1},x_{i}).$$
Then the posteriori error for the BVP is bounded by
\begin{equation}\label{Posteriori}
 a(u-u_h,u-u_h)\stackrel{<}{\sim}\sum_{i=1}^{K}h_i^{2 \alpha}\left\|f+\kappa_{\beta}\left(p{}_0D_x^{2\alpha}u_h +(1-p){}_xD_1^{2\alpha}u_h\right) \right\|^2_{L_2(I_i)},
 \end{equation}
 where $h_i=x_i-x_{i-1}$, and $\beta\in (1/2,1)$ ensures that ${}_0D_x^{2\alpha}u_h$ and ${}_xD_1^{2\alpha}u_h$ belong to $L_2$.

Now we prove (\ref{Posteriori}). Denote by $\Pi_h$ the operator for the piecewise linear interpolation associated with $\{x_i\}$ and $e_h=u-u_h$. For any $v\in H^{\alpha}_0(\Omega)$, there exists
 \[a\left(e_h,v\right)=a\left(e_h,v-\Pi_hv\right)=\left(f,v-\Pi_hv\right)-a\left(u_h,v-\Pi_hv\right).\]
 Combining (\ref{elliptic}), $(v-\Pi_hv)(x_{i-1})=(v-\Pi_hv)(x_i)=0$, and the regularity of $u_h$ leads to
 \begin{eqnarray*}
  a\left(e_h,v\right)&=&\sum_{i=1}^K\int_{I_i}\left(f+\kappa_{\beta}\left(p\,{}_0D_x^{2\alpha}u_h +(1-p){}_xD_1^{2\alpha}u_h\right)\right)\left(v-\Pi_h v\right)\,\mathrm{d}x\\\nonumber
  &\le&\sum_{i=1}^K\left\|f+\kappa_{\beta}\left(p\,{}_0D_x^{2\alpha}u_h +(1-p){}_xD_1^{2\alpha}u_h\right)\right\|_{L_2(I_i)}\left\|v-\Pi_hv\right\|_{L_2(I_i)}\\\nonumber
  &\stackrel{<}{\sim}&\sum_{i=1}^Kh_i^{\alpha}\left\|f+\kappa_{\beta}\left(p\,{}_0D_x^{2\alpha}u_h +(1-p){}_xD_1^{2\alpha}u_h\right)\right\|_{L_2(I_i)}\left\|v\right\|_{\mathcal{H}^{\alpha}(I_i)}\\\nonumber
  &\stackrel{<}{\sim}&\sqrt{\sum_{i=1}^Kh_i^{2\alpha}\left\|f+\kappa_{\beta}\left(p\, _0D_x^{2\alpha}u_h +(1-p){}_xD_1^{2\alpha}u_h\right)\right\|^2_{L_2(I_i)}}\left\|v\right\|_{\mathcal{H}^{\alpha}(\Omega)}.
  \end{eqnarray*}
Then (\ref{Posteriori}) follows by taking $v=e_h$ in the above inequality and using $a(v,v){\sim} \|v\|_{\mathcal{H}^{\alpha}(\Omega)}$.
The term $\left\|\,\cdot\,\right\|^2_{L_2(I_i)}$ involves nonlocal calculations; so it can not be used directly as a local error indicator. The following in Algorithm \ref{AD-STATIC}, we will show that for the wavelet methods of fractional BVPs, the local regularity indicator can be  the wavelet coefficient when the (to be determined) solution is represented by the multiscale bases: small coefficient implies good local regularity while big one indicates the opposite.
\begin{algorithm}[!h t b p]
\caption{ADAPTIVE WAVELET SOLVER FOR THE BVP}
\label{AD-STATIC}
\begin{algorithmic}[1]
\STATE Given $\epsilon(j), It_{max}$, $J_0$
\STATE $m=0$
\STATE Solve the equation in space $V_{J_0+1}$ to get the initial approximation coefficients $(\bf{c_{J_0}^0, d_{J_0}^0})$
		   and the index $\Lambda^m=(J_0,\lambda), \lambda\in\nabla_{J_0}$
\REPEAT
\STATE Determine the significant index set $\Lambda$ by $\epsilon(j)$

\STATE Check the adjacent zone index set $\mathcal{N}_{l,\lambda}$  of each $(l,\lambda)\in\Lambda$;
       denote $\Lambda_{\mathcal{N}}=\cup_{(l,\lambda)\in\Lambda}\mathcal{N}_{l,\lambda}$ and establish
			 $\Lambda^{m+1}=\Lambda\cup\Lambda_{\mathcal{N}}$
			
 \FOR{$(l,\lambda)\in\Lambda^{m+1}$}
 \STATE  ${}^*d_{l,\lambda}^{m+1}=\left\{
                                \begin{array}{ll}
								d_{l,\lambda}^m, &\quad (l,\lambda)\in\Lambda^m\\
								0,               &\quad\mbox{otherwise}
								\end{array}\right. $
\ENDFOR
\STATE  $\bf{{}^*c_{J_0}^{m+1}=c_{J_0}^m}$
\STATE Solve the algebraic matrix equation, resulted from the discretization in the nonlinear approximation space $\hat{V}_{J_0+m+1}(\Omega) \subset V_{J_0+m+1}(\Omega)$, by appropriate iterative scheme with the initial guess $(\bf{{}^*c_{J_0}^{m+1},{}^*d_{J_0}^{m+1},\ldots,{}^*d_{J_0+m+1}^{m+1}})$
\STATE Determine ${\Lambda}=\{(l,\lambda): (l,\lambda)\in \Lambda^{m+1}, |d_{l,\lambda}^m|\ge\epsilon(l)\}$
\STATE $m=m+1$
\UNTIL{$m>It_{max}$ or $\Lambda=\Phi$ (empty set)}
\end{algorithmic}
\end{algorithm}
One of the main features of  Algorithm \ref{AD-STATIC}  is that the finest grid resolution can be automatically determined by the given tolerance $\epsilon(j)$. In order to gain a more robust and faster algebra solver, $\psi_{j,k}$ has been  scaled by the inverse square root of  $a\left(\psi_{j,k},\psi_{j,k}\right)$, and the multiscale  approximation of the solution at the current scale has been as an initial guess for the iteration in the finer scale obtained after adding the wavelets. For constructing the refined index set, thanks to the tree structure of wavelet singularity detection, we first include a coarsening step by thresholding the latest available wavelet coefficients to get a significant index set, then add all their children. If $j=l+1$ and $k\in \left\{2\lambda,\,2\lambda+1\right\}$, then the wavelet indexed by $\left(j, k\right)$ is called a child of the wavelet indexed by $(l,\lambda)$. One can further extend the index set by including the horizontal neighbors of the wavelet indices already included. Such an extended index set associated with the index  $\left(l,\lambda\right)$ is called an adjacent zone, which is denoted by $\mathcal{N}_{l,\lambda}$. In Algorithm \ref{AD-STATIC}, the index set is continuously updated to resolve the local structures that appear in the solution. One can dynamically adjust the number and locations of the wavelets used in the wavelet expansion, reducing significantly the cost of the scheme while providing enough resolution in the regions where the solution varies significantly.
The $m$-th approximation of the solution is given by
\begin{equation}
\widehat{u}_{J_0+m}=\sum_{(l,\lambda)\in{\triangle_{J_0}\cup\Lambda^m}} d_{l,\lambda}\hat{\psi}_{l,\lambda},
\end{equation}
where $\triangle_{J_0}\cup\Lambda^m$ is the irregular index set, and $\hat{\psi}_{l,\lambda}$ represents the normalization of $\psi_{l,\lambda}$.
 Finally, after getting the sufficiently accurate approximation, the corresponding single scaling representation can be got by the FWT.

 The following we  develop  adaptive algorithm for   time-dependent problem. With the time partition $0\equiv t_0<t_1<\cdots <t_N\equiv T$ and stepsizes $\Delta t_n=t_{n+1}-t_n,\, n=0,\ldots,N-1$, for some time $\tau\in [t_{n-1},t_n]$, one arrives at a system of the form
\begin{equation}
 \partial_tu\left|_{\tau}\right.+{\bf{A}}u(\cdot,\tau)=f(\cdot,\tau).
\end{equation}
The numerical solution $\overline{U}_J^n$ can be uniquely represented (a unique decomposition) in one of the subspaces of $S_J:=S_{J_0} \cup W_{J_0}\cup\cdots W_{J-1}$:
\begin{equation}
 \overline{U}_J^n(x)=\sum_{k\in\triangle_{J_0}}\overline{c}_{J_0,\lambda}^n\phi_{J_0,\lambda}+\sum_{j=J_0}^{J-1}\sum_{\lambda\in\nabla_j\cap \mathcal{G}^n}\overline{d}_{j,\lambda}^n\psi_{j,\lambda},
\end{equation}
which is equivalent to the unique coefficient vector
\begin{equation}
\wp^n:=
\left({\overline{{\mathrm c}}_{J_0}^n,\, \overline{{\mathrm d}}_{J_0}^n,\,\ldots, \,\overline{{\mathrm d}}_{J-1}^n}\right).
\end{equation}
For establishing the algebraic system of $\wp^n$, one can still use the Galerkin scheme; as an extension, the collocation method based on the semi-interpolation wavelet (see eq. (\ref{Cubic:1})--(\ref{Cubic:4})) can also be considered, which replaces the test function $\left\{\phi_{J_0,\,\lambda}\right\}\bigcup\left\{\psi_{j,\lambda}\right\}, \left(j,\,\lambda\right)\in \mathcal{G}^{n}$ of the Galerkin scheme by the Dirac distribution $\delta$ centered at $x_i$, being the collocation point corresponding to the index set $\mathcal{G}^{n}$, a subset of $\left\{k/2^{J_0}\right\}_{k=1}^{2^{J_0}-1}\bigcup \left\{(2k+1)/2^{j+1}\right\}_{k=0,j=J_0}^{2^j-1,J-1}$. Generally, the collocation method is more convenient and efficient for problems with variable coefficients and/or nonlinear terms. Details of such a scheme are provided in Algorithm \ref{AD-TIME},
\begin{algorithm}[! h t b p]
\caption{ADAPTIVE WAVELET SOLVER FOR THE IBVP}
\label{AD-TIME}
\begin{algorithmic}[1]
\STATE Given time partition $\{t_n\}, J_{max}$ and  threshold $\epsilon(j)$
\STATE Construct the initial irregular index set $\mathcal{G}^0$ and the  multiscale coefficients $\wp^0$ by $u_0$
\FOR {$n=1,\ldots,N$}
\STATE Based on $\wp^{n-1}$ to solve  $\wp^n$ on the index set 
\STATE Threshold $\wp^n$ to obtain the significant index set 

    \[ \tilde{\mathcal{G}}^n:=\{(j,\lambda)\in \mathcal{G}^{n-1}:|d_{j,\lambda}|\ge \epsilon(j) \} \]

\STATE Add the adjacent indices to it, and denote the result by $\mathcal{G}^n$
\IF {$\mathcal{G}^{n-1}$ and $\mathcal{G}^n$ are different}
\STATE For every index $(j,\lambda)\in\mathcal{G}^n$  not in $\mathcal{G}^{n-1}$,
   the corresponding wavelet coefficients are initialized with $0$, and denote the result by $\wp^n$
\ENDIF
\ENDFOR
\end{algorithmic}
\end{algorithm}
the steps are very similar to Algorithm \ref{AD-STATIC}, except that for treating the structures appearing in the solutions as they evolve, the computational index needs to dynamically  adapt to the local change of the regularity of the solution. For an implicit or explicit time integration, we use wavelet amplitudes of the approximate solution at the current time level to construct the irregular index for the approximate solution of the next time level. The initial irregular index $\mathcal{G}^0$ can be constructed by adding the adjacent zone to the significant index set of the initial solution $u(x,0)=g(x)$.

\section{Numerical results}
In order to illustrate the accuracy and efficiency of the proposed numerical schemes, we apply them to solve the BVP and/or IBVP (\ref{eq:1.1}). Example \ref{example1} is used to discuss the implementations of the MGM for the BVP and the collocation method for the IBVP, and in particular the convergence orders are carefully verified. We use Example \ref{example2} to show the powerfulness of the provided multilevel preconditioner and MMG. And Example \ref{example3} is used to illustrate the effectiveness  of the presented wavelet adaptive schemes.

\begin{example}\label{example1}\end{example} Consider the MGM for the  BVP (\ref{eq:1.1}) with $q=0$ and $p=\kappa_{\beta}=1$, and the source term \[f(x)=\frac{2x^{\beta}}{\Gamma(\beta+1)}-\frac{\Gamma(\nu+1)}{\Gamma(\nu+\beta-1)}x^{\nu+\beta-2}.\]
The exact solution of the problem is $u(x)=x^{\nu}-x^2$. It is well known that if $\nu>0$ and $\nu\notin\mathbb{N}$, then $u\in \mathcal{H}^{\nu+1/2-\epsilon}(\Omega)$.
For $\beta=4/5$, the numerical results are listed in Tables \ref{tab:1_1} and \ref{tab:1_2},
\begin{table}[!h t b]\fontsize{7.0pt}{12pt}\selectfont
\begin{center}
 \caption{Numerical results of the BVP (\ref{eq:1.1}), solved by MGM, with $q=0$, $p=\kappa_{\beta}=1$, and $\beta=4/5$.}
\begin{tabular} {|cc|cccc|cc|}  \hline
 $d$ & $J$    &\multicolumn{4}{c|}{$\nu=4$}                                     &\multicolumn{2}{c|}{$\nu=17/10$} \\\cline{3-8}
      &    &  $L_2$-Err    & $L_2$-Rate  &   $a(u-u_J,u-u_J)^{1/2}$  &$\mathcal{H}^{\alpha}$-Rate              &  $L_2$-Err   & $L_2$-Rate  \\
      \hline
      & $6$   &   17589e-04    &  ---        & 8.5677e-04 &  ---                                &  1.4535e-05      & ---        \\
 $d=2$& $7$   &  4.3968e-05   & 2.0001     & 3.1635e-04 &1.4374                                &   3.6314e-06     & 2.0009     \\
      & $8$   &   1.0993e-05   & 1.9999     &1.1829e-04  &1.4192                                &  9.0287e-07     & 2.0079    \\
      \hline
      &$6$   &   6.2317e-07   &  ---        & 6.2807e-06 &  ---                                 &  1.0342e-06      & ---        \\
 $d=3$& $7$  & 7.7779e-08   &3.0021       & 1.1896e-06   &2.4004                                 &  2.2509e-07      &2.2000   \\
     & $8$   &   9.7152e-09   &3.0011       & 2.2531e-07 & 2.4005                                & 4.8988e-08      &2.2000     \\
    \hline
   \end{tabular}\label{tab:1_1}
\end{center}
\end{table}
\begin{table}[!h t b p]\fontsize{6.0pt}{11pt}\selectfont
\begin{center}
 \caption{Numerical results of the BVP (\ref{eq:1.1}), solved by MGM, with $q=0$, $p=\kappa_{\beta}=1$, and $\beta=4/5$.}
\begin{tabular} {|c|cc|cc|cc|cc|}  \hline
    $J$    &\multicolumn{4}{c|}{$\nu=11/10$} &\multicolumn{4}{c|}{$\nu=21/10$}  \\ \cline{2-9}
           &\multicolumn{2}{c|}{$d=3$}&\multicolumn{2}{c|}{$d=4$}&\multicolumn{2}{c|}{$d=3$}&\multicolumn{2}{c|}{$d=4$}\\
          &  $L2$-Err    & $L_2$-Rate     &  $L_2$-Err    & $L_2$-Rate   &  $L2$-Err    & $L_2$-Rate     &  $L_2$-Err   & $L_2$-Rate  \\ \hline
    $6$   &   1.4385e-05 &  ---        & 8.0390e-06         &---         & 1.2656e-07& ---                     &   3.2703e-08  &   ---       \\
    $7$   &   4.7453e-06  & 1.6000      & 2.6516e-06         &1.6002       &2.0865e-08& 2.6007                 &  5.3930e-09   &   2.6002    \\
    $8$   &   1.5654e-06  & 1.6000      & 8.7469e-07         &1.6002       &3.4407e-09& 2.6003                 & 8.8950e-10   &   2.6000     \\
    \hline
   \end{tabular}\label{tab:1_2}
\end{center}
\end{table}
 which confirm that if the analytical  solution is smooth enough, the convergence order is $d$ and $d-\alpha$ in the $L_2$ and $\mathcal{H}^{\alpha}$-norm,  respectively. Otherwise the convergence order is limited by the regularity of the solution, but the approximation accuracy is improved when the high order bases are used. Moreover, If the modified Galerkin method, e.g., the one proposed in \cite{Jin:1}, is used, for this type problem  one can have a  convergence rate $d-\beta$ for the sufficient smooth source term $f$; when $f=1$, the numerical results are listed in Table \ref{tab:1-3}.
 \begin{table}[!h t b]\fontsize{7.0pt}{12pt}\selectfont
\begin{center}
 \caption{Numerical results of the BVP (\ref{eq:1.1}), solved by MGM, with $q=0$, $p=\kappa_{\beta}=1$, $f=1$, $d=4$, and $\mu=4$.}
\begin{tabular} {|c|cc|cc|cc|}  \hline
    $J$    &\multicolumn{2}{c|}{$\beta=4/5$}   &\multicolumn{2}{c|}{$\beta=1/2$}      &\multicolumn{2}{c|}{$\beta=1/5$} \\\cline{2-7}
          &  $L2$-Err    & $L_2$-Rate          &  $L_2$-Err    & $L_2$-Rate                   &  $L_2$-Err   & $L_2$-Rate  \\ \hline

    $6$   &    2.4209e-07  &  ---      & 9.0406e-09         &---                               &  3.0206e-10    &   ---       \\
    $7$   &    2.6360e-08  & 3.1991   &  8.0061e-10         &3.4973                             & 2.1733e-11    & 3.7969    \\
    $8$   &    2.8694e-09  & 3.1996    & 7.0772e-11        &3.4999                              & 1.5568e-12    & 3.8032   \\
    \hline
   \end{tabular}\label{tab:1-3}
\end{center}
\end{table}
 We want to emphasize that including the boundary bases is very important to ensure the polynomial exactness  (known as the Strang-fix condition), which is the foundation to have the desired convergence results.  The numerical results in Table \ref{tab:2} are for the cases that the boundary base functions are absent (one base is removed for $d=3$ and two for $d=4$).
At this moment the exact solution after zero extension is required to have sufficient regularity to recover the desired convergence order. The similar observations are also detected for the finite difference methods, and the ways of recovering the optimal convergence orders are presented in \cite{Chen:15, Zhao:15}.
\begin{table}[!h t b]\fontsize{7.0pt}{12pt}\selectfont
\begin{center}
 \caption{Numerical results of the BVP (\ref{eq:1.1}), solved by inner MGM, with $q=0$, $p=\kappa_{\beta}=1$, and $\beta=4/5$.}
\begin{tabular}{|c c|c c |c c|c c|}
  \hline
$d$  &   $J$    &\multicolumn{2}{c|}{$u(x)=x^{4}-x^2$}  &\multicolumn{2}{c}{$u(x)=x^2(x-1)^2$}     \vline& \multicolumn{2}{c|}{$u(x)=x^3(x-1)^3$} \\
  \cline{3-8}
    &      &  $L_2$-Err    & $L_2$-Rate                                &  $L_2$-Err    & $L_2$-Rate       &  $L_2$-Err    & $L_2$-Rate \\
\hline
    &$6$   &   9.8488e-3    & ---                                      & 1.2187e-06   &  ---              &5.9382e-07     & ---           \\
3   &$7$   &   4.9332e-3    & 0.9974                                   & 1.5229e-07   & 3.0004            & 7.5027e-08    &2.9845         \\
    &$8$   &   2.4688e-3    & 0.9987                                    & 1.9027e-08   & 3.0007            & 9.4230e-09    &2.9931         \\
 \hline
    &$6$   &    1.9092e-2    & ---                                      & 8.9636e-05   &  ---               & 5.9852e-08   & ---          \\
4   &$7$   &    9.6958e-3    &0.9775                                    & 2.2395e-05   &  2.0009            & 3.7671e-09   &  3.9898        \\
    &$8$   &    4.8857e-3    &0.9888                                    & 5.5941e-06   &  2.0012            & 2.3616e-10   &  3.9956       \\
 \hline
\end{tabular}\label{tab:2}
\end{center}
\end{table}

We further consider the collocation method for the variable-coefficient version of the IBVP (\ref{eq:1.1}). The collocation points are chosen as $\big\{1/2^{J+1}, k/2^{J}\big|_{k=1}^{2^J-1}, 1-1/2^{J+1}\big\}$, and the approximation properties of the cubic spline collocation method are discussed in the space $S_J:={\rm span}\{\Phi_J\}$ with $\Phi_J=\{\phi_{J,k},k\in\triangle_J, d=4\}$.
The considered equation is
\begin{equation}\label{collocation:1}
  u_t-(k_1x^{2-\beta}{}_0D_x^{2-\beta}u + k_2(1-x)^{2-\beta}\ {}_xD_1^{2-\beta}u)=f \qquad t\in (0,T]
\end{equation}
with the right-hand term
\begin{eqnarray*}
f(x,t)&=&-12\exp(-t)\Big\{x^2(1-x)^2+\frac{1}{6}\left[k_1x^2+k_2(1-x)^2\right]\\
&&-\frac{1}{\beta+1}\left[k_1x^3+k_2(1-x)^3\right]+\frac{2}{(\beta+1)(\beta+2)}\left[k_1x^4+k_2(1-x)^4\right]\Big\}
\end{eqnarray*}
and the initial condition $u(x,0)=x^2(1-x)^2$. Then it can be checked that the analytical solution is  $u(x,t)=\exp(-t)x^2(1-x)^2$.

The Crank-Nicolson scheme is used to get the full discretization approximation of (\ref{collocation:1}) with the time stepsize $1/2^{2J}$ and $T=1/2$.
\begin{table}[!h t b]\fontsize{7.0pt}{12pt}\selectfont
\begin{center}
 \caption{Convergence performance of the cubic spline collocation method with $ k_1=k_2=1$.}
\begin{tabular}{| c|c c |c c|c c|}
    \hline
        $J$    &\multicolumn{2}{c|}{$\beta=2/10$}  &\multicolumn{2}{c|}{$\beta=8/10$}     & \multicolumn{2}{c|}{$\beta=0$} \\
  \cline{2-7}
& $L_{\infty}$-Err      & $L_{\infty}$-Rate                           &$L_{\infty}$-Err  & $L_{\infty}$-Rate  & $L_{\infty}$-Err  & $L_{\infty}$-Rate \\
   \hline
    $5$    &    5.8773e-05    & ---                                           & 1.8431e-06    &  ---       &1.6167e-04     & ---        \\
    $6$    &    1.2862e-05     & 2.1920                                       & 2.6580e-07   & 2.7937      & 4.0533e-05     &1.9958    \\
    $7$    &    2.8036e-06     & 2.1977                                       & 3.8223e-08  &  2.7978      & 1.0441e-05     &1.9990      \\
 \hline
\end{tabular}\label{tab:4}
\end{center}
\end{table}
Table \ref{tab:4} shows the expected convergence order $2+\beta$ of collocation method for fractional PDE, agreeing with the classical conclusion when $\beta=0$.  Though the superconvergence can be obtained for classical PDE by carefully averaging the derivative values gotten in collocation points, it seems that this result maynot be directly extended to fractional PDE.
For convenience, let's consider the frequently used Hermite spline collocation method. Take the collocation space
\[V_J:={\rm span}\left\{\pi_2(2^Jx+1)\big|_{\Omega}, \pi_1(2^Jx-k),\pi_2(2^Jx-k),\pi_2(2^Jx-2^J+1))\big|_{\Omega}\right\},\]
where $\big|_{\Omega}$ denotes the restriction in $\Omega$, $k=0,1,\cdots,2^J-2$, and $\pi_1,\pi_2$ are the cubic Hermite compactly supported functions given as
\begin{eqnarray*}
&& \pi_1(x)=\left\{\begin{array}{lc} -x^2(2x-3)&0\le x<1,\\(x-2)^2(2x-1)&\quad1\le x\le 2, \end{array}\right.\\[3pt]
&& \  \pi_2(x)=\left\{\begin{array}{lc} x^2(x-1)&0\le x<1,\\(x-2)^2(x-1)&\quad 1\le x\le 2. \end{array}\right.
\end{eqnarray*}
To determine the unknown coefficients, one need total $2^{J+1}$ collocation points. As well known for classic PDE ($\beta=0$), when the general collocation points such as the third-quarter points of every interval $[i/2^J, (i+1)/2^J], i=0,1,\cdots,2^J-1$ are used, the convergence order is $2$. But if the Gauss nodes are used, one arrives at the superconvergence result of order $4$. Unfortunately,  in both case the convergence order are  $2+\beta$ for the fractional PDE, except the approximation accuracy maybe improved.
The numerical results are presented in Table \ref{tab:5}, where the abbr `Equi' and `Gauss' denote the two types of collocation points mentioned above.
\begin{table}[!h t b]\fontsize{5.0pt}{11pt}\selectfont
\begin{center}
 \caption{Convergence performance of the cubic Hermite collocation method  with $ k_1=1$ and $k_2=0$.}
\begin{tabular}{| c|c c |c c|c c|c c|}
  \hline
  $J$    &\multicolumn{2}{c|}{$\beta=5/10$, Equi}  &\multicolumn{2}{c|}{$\beta=5/10$, Gauss}   & \multicolumn{2}{c|}{$\beta=0$, Equi} & \multicolumn{2}{c|}{$\beta=0$, Gauss} \\
  \cline{2-7}\cline{8-9}
  & $L_{\infty}$-Err      & $L_{\infty}$-Rate  &$L_{\infty}$-Err  & $L_{\infty}$-Rate  & $L_{\infty}$-Err  & $L_{\infty}$-Rate & $L_{\infty}$-Err & $L_{\infty}$-Rate \\
   \hline
    $5$    & 8.2952e-06     & ---               & 1.9362e-07    &  ---        &  3.5693e-05    & ---          &  3.5875e-08  & --- \\
    $6$    & 1.4663e-06     & 2.5001            &  3.2737e-08   & 2.5642     & 8.9270e-06      &1.9994        & 2.2506e-09  &3.9946\\
    $7$    & 2.5912e-07     & 2.5005            &  5.6891e-09   & 2.5247      & 2.2316e-06     &2.0001        &  1.4098e-10  &3.9968\\
 \hline
\end{tabular} \label{tab:5}
\end{center}
\end{table}
\begin{remark}
 Unlike the Galerkin method, the differential matrix of right derivative in collocation method is not the transpose of its left twin. Instead, combining the symmetry of the scaling spline bases and collocation points, it is easy to prove that \[A_r=A_l({\rm end}:-1:1,{\rm end}:-1:1).\]
 \end{remark}

\begin{example}\label{example2} \end{example} Now, we focus on the wavelet multilevel schemes for solving the fractional PDEs. The presented numerical results with $d=2$, and in this case the coefficient matrix has a full Toeplitz structure. The matrix-vector product is performed by FFT. For the other bases, the computational procedure is almost the same after a slight modification, e.g., when $d=3$, $A_1{\boldsymbol {\mathrm c}}_j:=\left({}_0D_x^{-\beta/2}D\Phi_J, {}_xD_1^{-\beta/2}D\Phi_J\right){\boldsymbol {\mathrm c}}_j$ can be decomposed into several blocks with $H$ being the Toeplitz matrix:
 \begin{eqnarray*}
&&\big(A_1{\boldsymbol {\mathrm c}}_j\big)(1)=\big[a_1,r(\boldsymbol {\mathrm{a_2}})^T,0\big]{\boldsymbol {\mathrm c}}_j,\\
&&\big(A_1{\boldsymbol {\mathrm c}}_j\big)(2: {\rm end}-1)={\boldsymbol {\mathrm c}}_j(1)\,\boldsymbol {\mathrm{a_1}}+H{\boldsymbol {\mathrm c}}_j(2:{\rm end}-1)+{\boldsymbol {\mathrm c}}_j({\rm end})\,\boldsymbol {\mathrm{a_2}},\\
&&\big(A_1{\boldsymbol {\mathrm c}}_j\big)({\rm end})=\big[a_2,r(\boldsymbol {\mathrm{a_1}})^T,a_1\big]{\boldsymbol {\mathrm c}}_j.
\end{eqnarray*}

For the BVP, we first reveal that the multilevel preconditioning brings a uniform  matrix condition number and an improved spectral distribution. Considering the BVP (\ref{eq:1.1}) with $\kappa_{\beta}=1, p=1$ and $\kappa_{\beta}=1,p=1/2$, the condition numbers for different $\beta$ are presented in Table \ref{tab:2_1};
one can see that without preconditioning, the condition number of the stiffness matrix behaves like $\mathcal{O}(2^{J(2-\beta)})$, which means the conditional number increases fast with the refinement especially when $\beta$ is small.
 \begin{table}[!h t b p]\fontsize{6.0pt}{11pt}\selectfont
\begin{center}
 \caption{Primal condition numbers of the BVP (\ref{eq:1.1}) with $q=0,\,\kappa_{\beta}=1$, and $d=2$.}
\begin{tabular} {|c|cc|cc|cc|cc|}  \hline
$J$ &\multicolumn{2}{c|}{$p=1, \beta=1/2$} &\multicolumn{2}{c|}{$p=1/2,\beta=1/2$}  &\multicolumn{2}{c|}{$p=1,\beta=1/5$}    &\multicolumn{2}{c|}{$p=1/2,\beta=1/5$} \\\cline{2-9}
          &  Con-Num   & Rate     &  Con-Num  & Rate         & Con-Num  & Rate   &Con-Num & Rate \\ \hline

    $8$   &     1.4763e+03  &  ---    & 1.8304e+03           &   ---    & 8.7494e+03         &---             &9.1119e+03 &---  \\
    $9$   &    4.1754e+03  & 1.5000  & 5.1784e+03            & 1.5003     & 3.0467e+04       &1.8000           &3.1732e+04 &1.8001 \\
    $10$   &    1.1810e+04  & 1.5000   & 1.4648e+04           & 1.5002      &1.0609e+05       &1.8000           &1.1050e+05 &1.8000\\
    \hline
   \end{tabular}\label{tab:2_1}
\end{center}
\end{table}
 After preconditioning, the uniformly bounded condition numbers with different wavelet preconditioners are obtained; see Table \ref{tab:2_2}, where  `inte-', `Semi-', and `Bior- ($\tilde{d}$)' denote the interpolation wavelet, semiorthogonal wavelet, and biorthogonal wavelet ${}^{2,2}\psi$, respectively, having been introduced in Section 2. Note that when performing the decomposition by semiorthogonal and biorthogonal wavelets, the interpolation wavelet has been used for $S_1$ and $S_2$.
\begin{table}[!h t b]\fontsize{7.0pt}{12pt}\selectfont
\begin{center}
 \caption{ Preconditioned condition numbers of the BVP (\ref{eq:1.1}) with  $q=0,\, \kappa_{\beta}=1$, $d=2$, and $J_0=0$.}
\begin{tabular} {|cc|ccc|ccc|}  \hline
 $p$ & $J$    &\multicolumn{3}{c|}{$\beta=1/2$}                        & \multicolumn{3}{c|}{$\beta=1/5$}  \\\cline{3-8}
      &       & Inte-&    Semi-      & Bior- ($\tilde{d}=2$)        & Inte-  & Semi-     & Bior- ($\tilde{d}=2$) \\
      \hline
      & $8$   &   3.0970    & 5.8363    &  13.3957                    & 1.5953      & 10.2897      & 12.2315      \\
 $p=1$& $9$   &   3.2286    & 6.1158    & 14.4784                     &1.6269        &10.5561      &12.9767    \\
      & $10$   &  3.3457    & 6.3622     &15.4103                     &1.6540       &10.7779      & 13.5788   \\
      \hline
      &$8$   &   3.2614   &  8.0344      & 12.7702                      & 1.5935       & 11.1094      &12.2830      \\
$p=1/2$& $9$  & 3.4745   &  8.1648       & 13.6624                       &1.6296        &11.4094      &13.0063      \\
     & $10$   &  3.6686   &  8.2634      & 14.4026                      & 1.6608       & 11.6511      &13.5854       \\
    \hline
   \end{tabular}\label{tab:2_2}
\end{center}
\end{table}
 We also display the matrix eigenvalue distribution for $\beta=1/5$ in Figures  \ref{Fig-Precondtion1} and \ref{Fig-Precondtion2}; they show the preconditioning benefits of  a more concentrated eigenvalue distribution.
\begin{figure}[!h t b p]
\begin{center}
\includegraphics[width=1.1in,height=1.1in,angle=0]{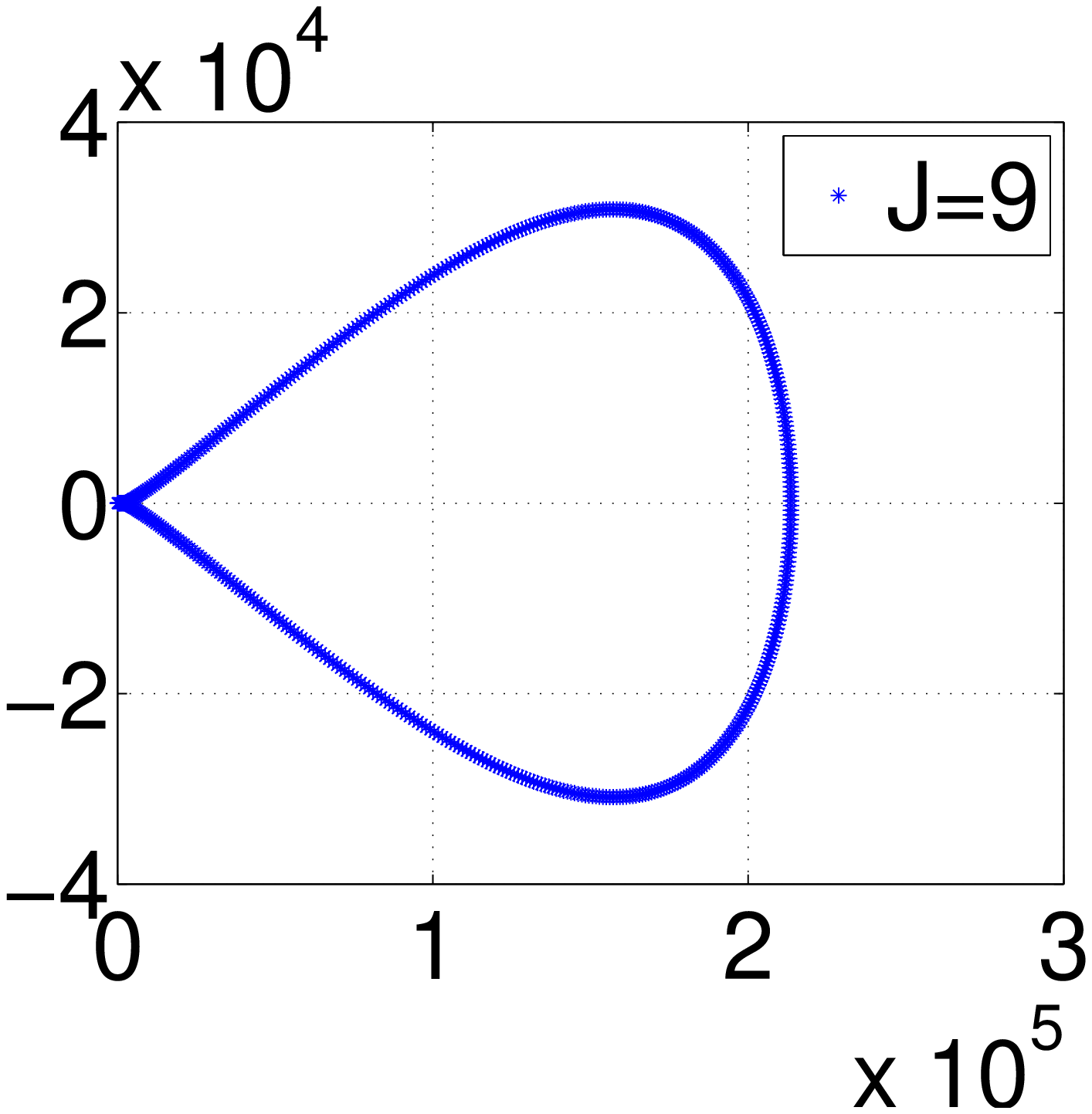}
\includegraphics[width=1.1in,height=1.1in,angle=0]{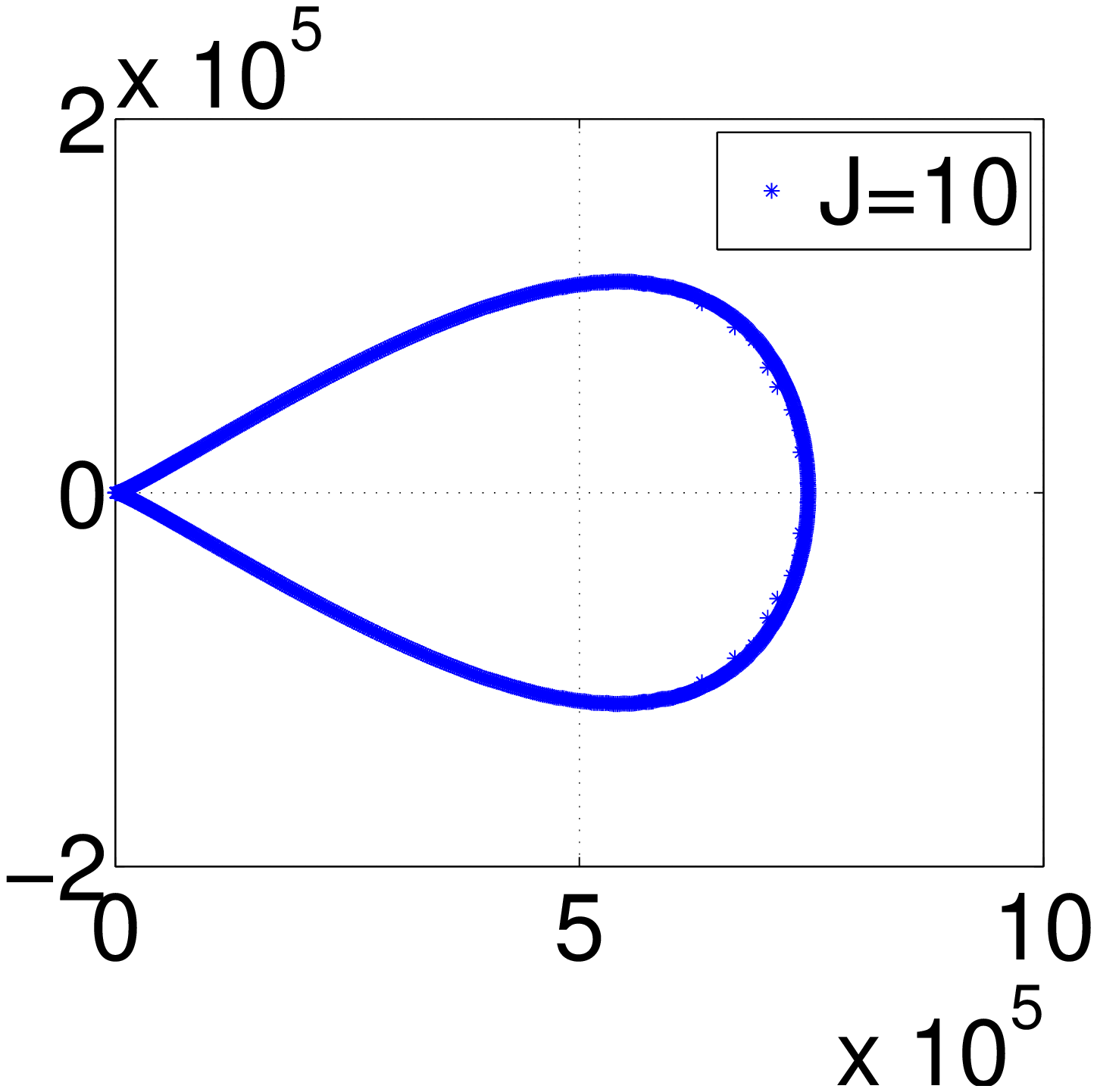}
\includegraphics[width=1.1in,height=1.1in,angle=0]{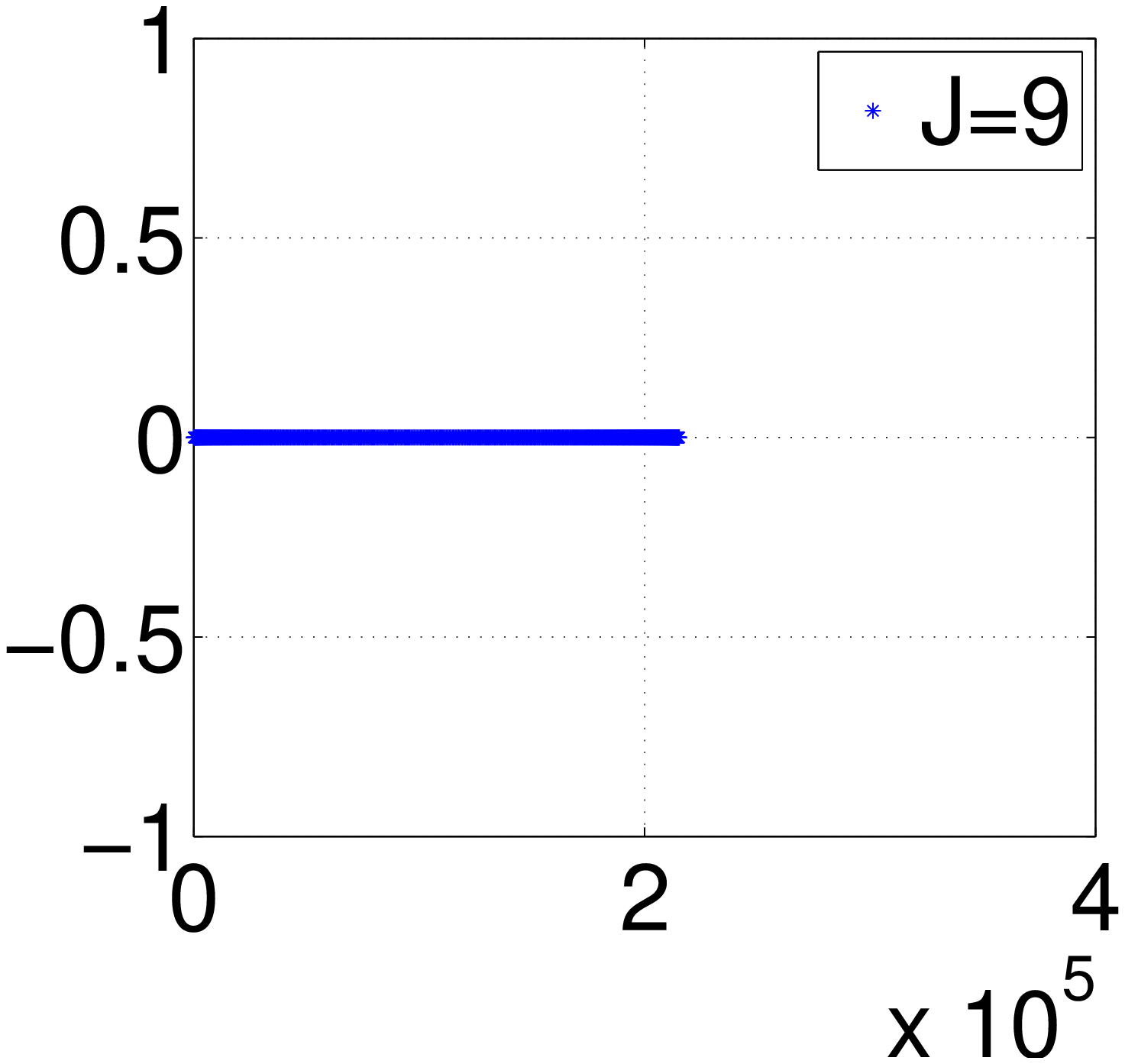}
\includegraphics[width=1.1in,height=1.1in,angle=0]{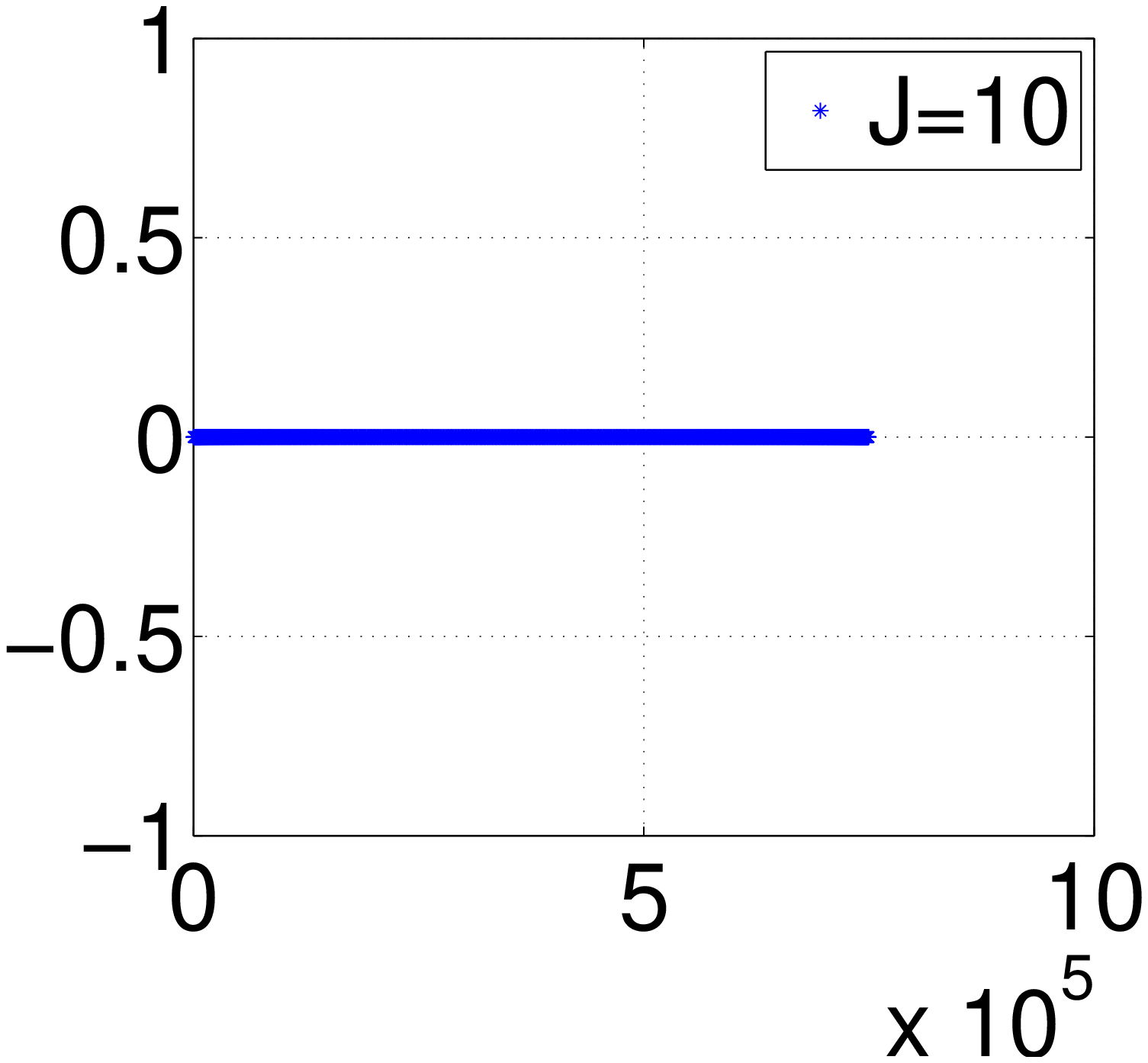}
\caption{Eigenvalue distribution of the BVP matrix with $p=1$ (first two) and $p=1/2$ (last two).}\label{Fig-Precondtion1}
\end{center}
\end{figure}
\begin{figure}[!h t b p]
\begin{center}
\includegraphics[width=1.1in,height=1.1in,angle=0]{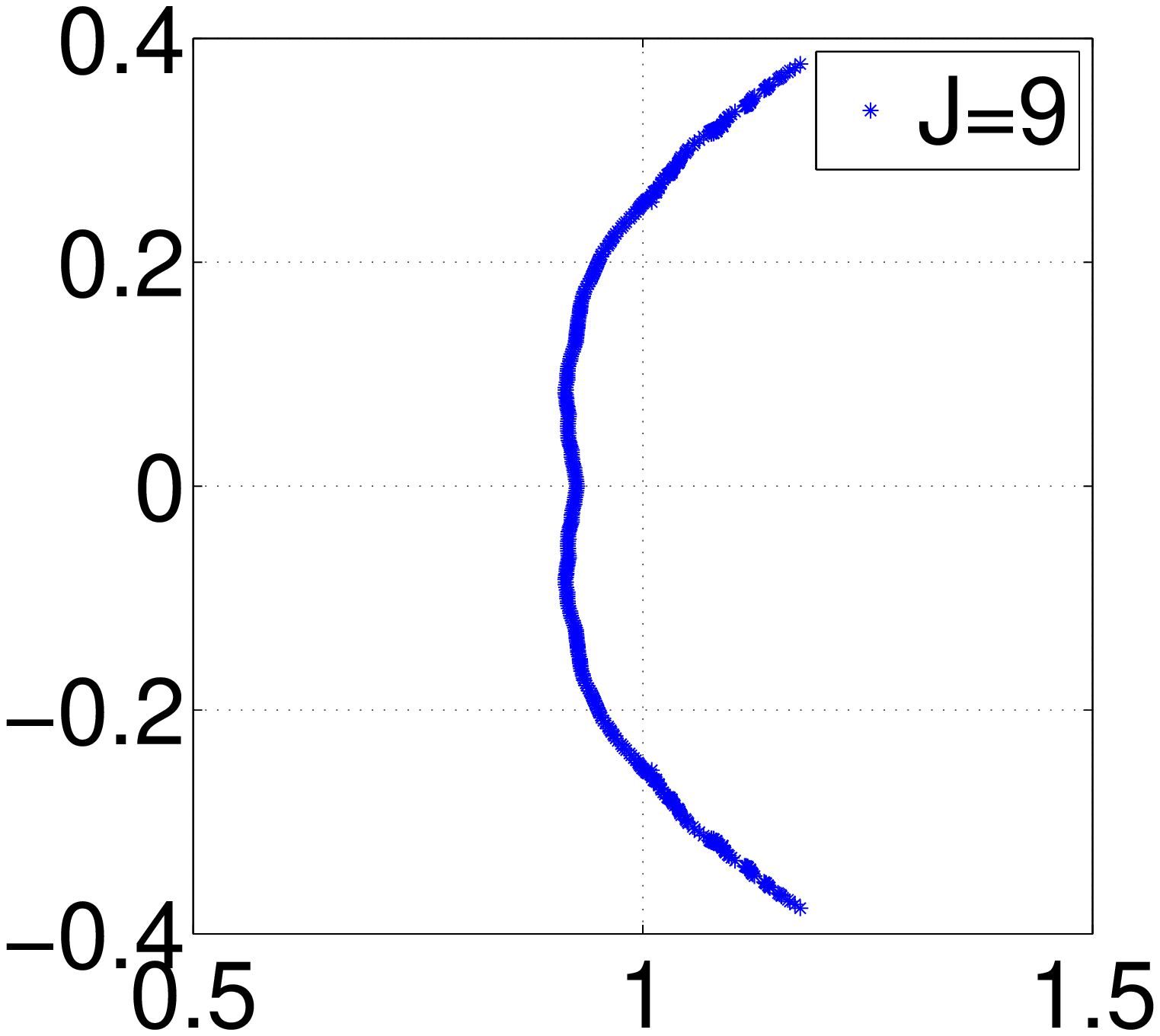}
\includegraphics[width=1.1in,height=1.1in,angle=0]{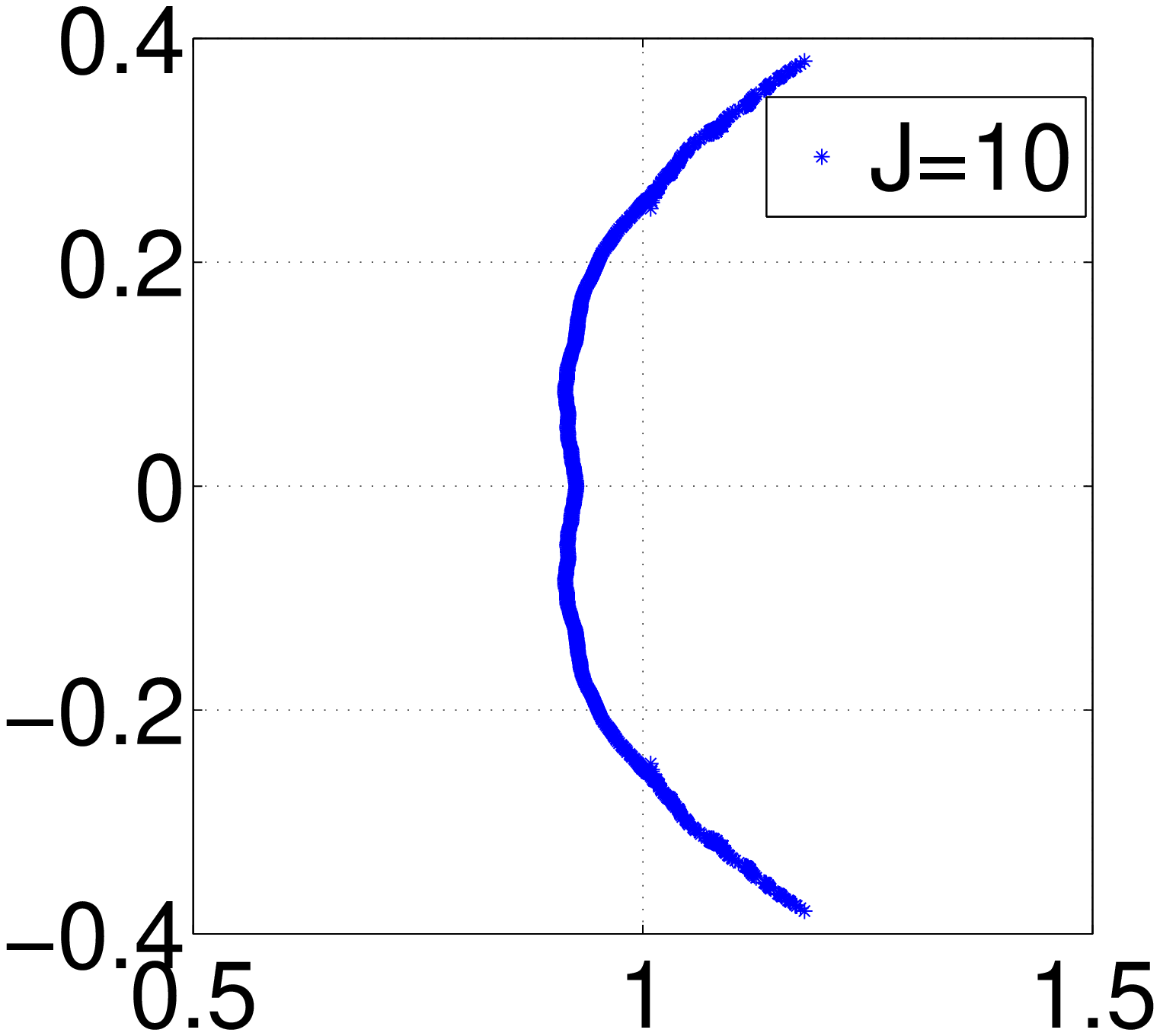}
\includegraphics[width=1.1in,height=1.1in,angle=0]{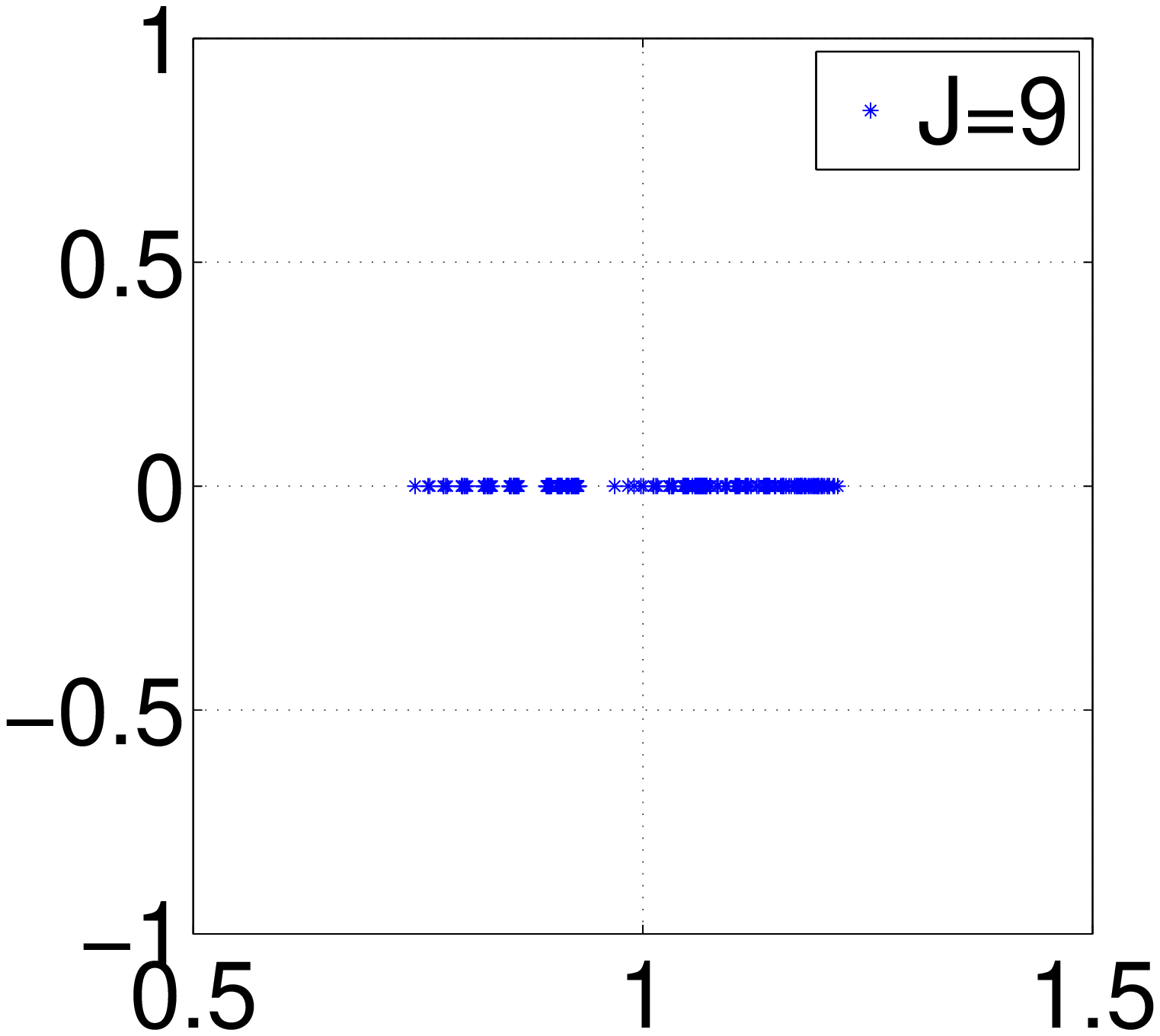}
\includegraphics[width=1.1in,height=1.1in,angle=0]{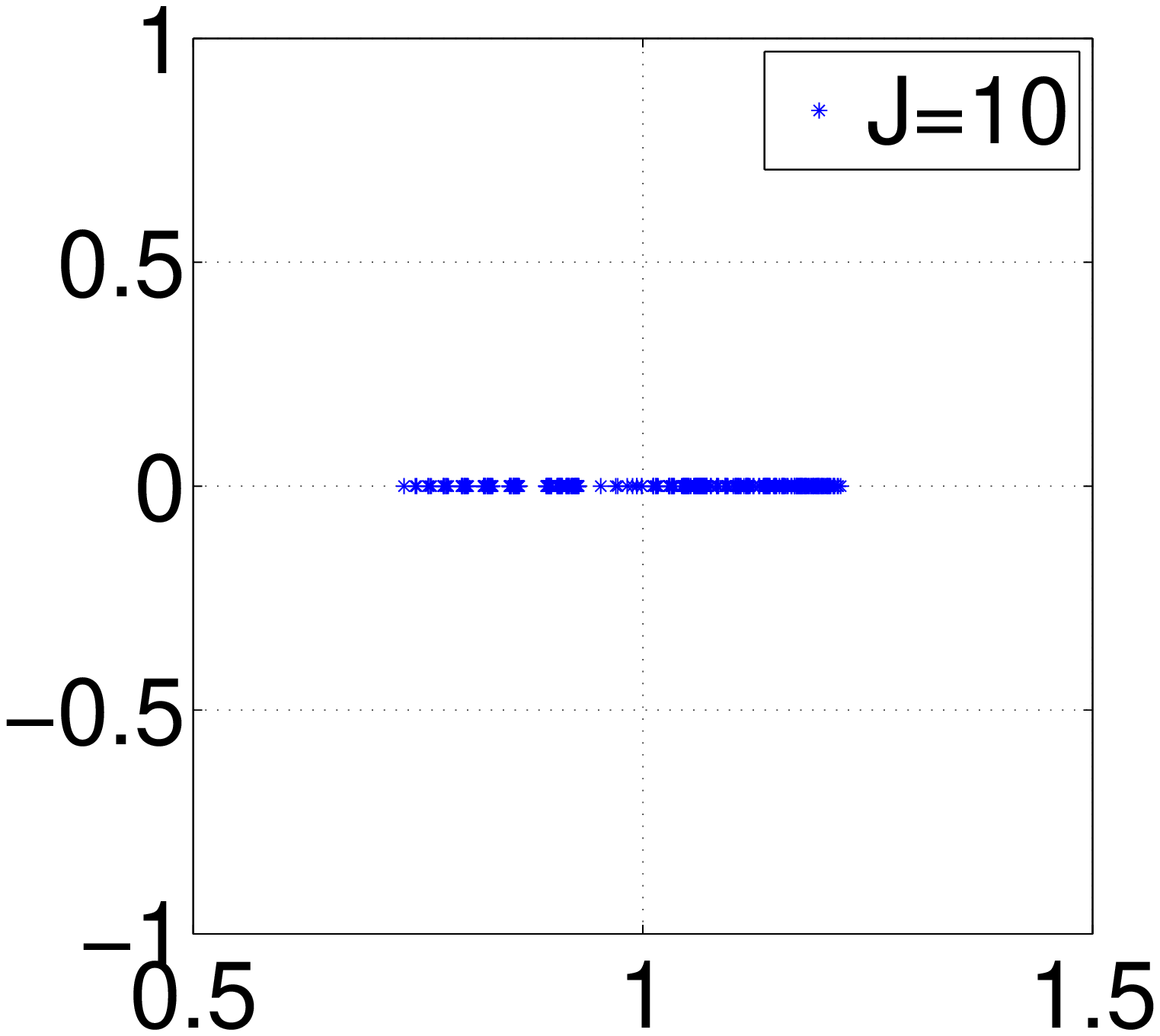}

\includegraphics[width=1.1in,height=1.1in,angle=0]{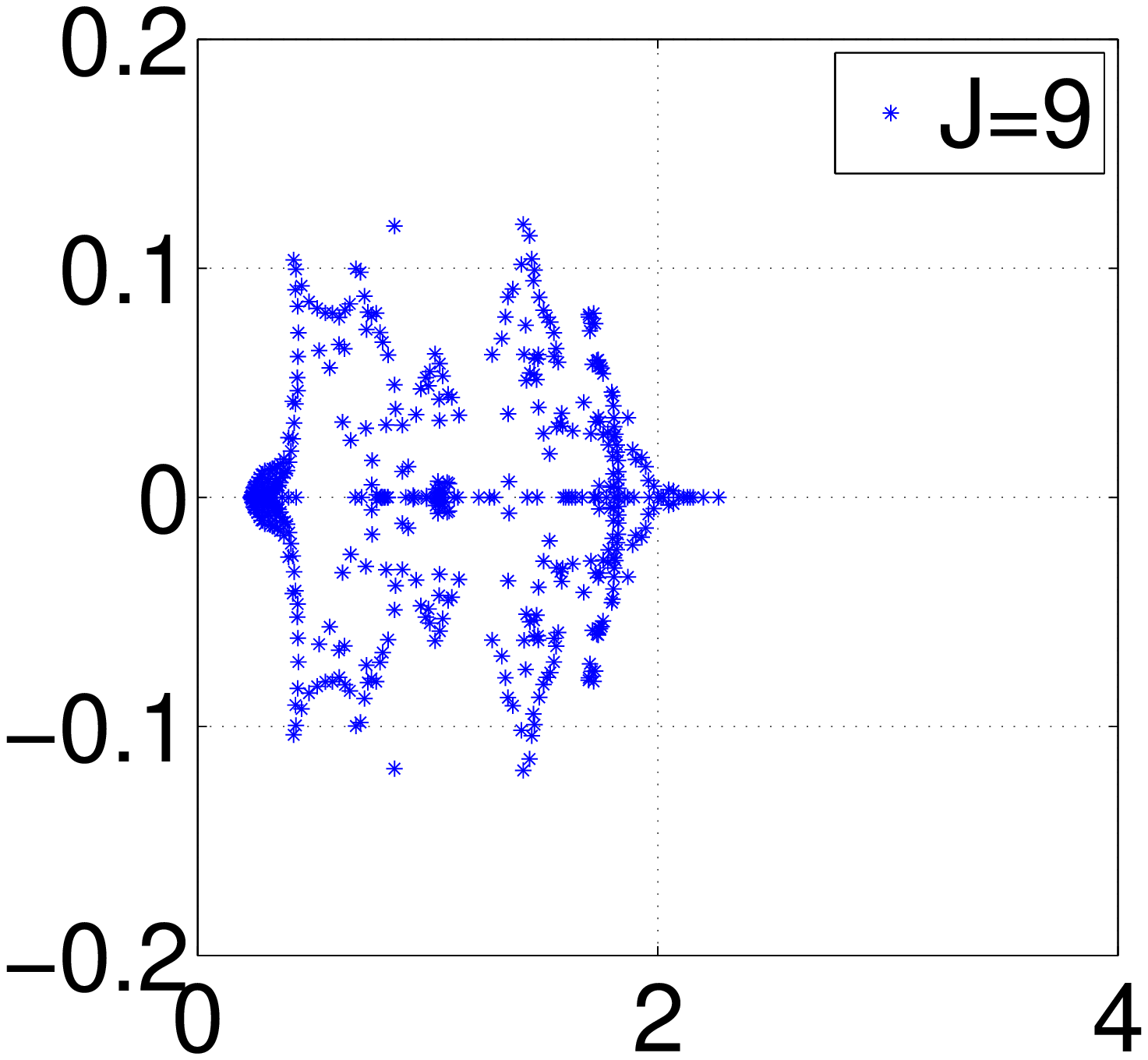}
\includegraphics[width=1.1in,height=1.1in,angle=0]{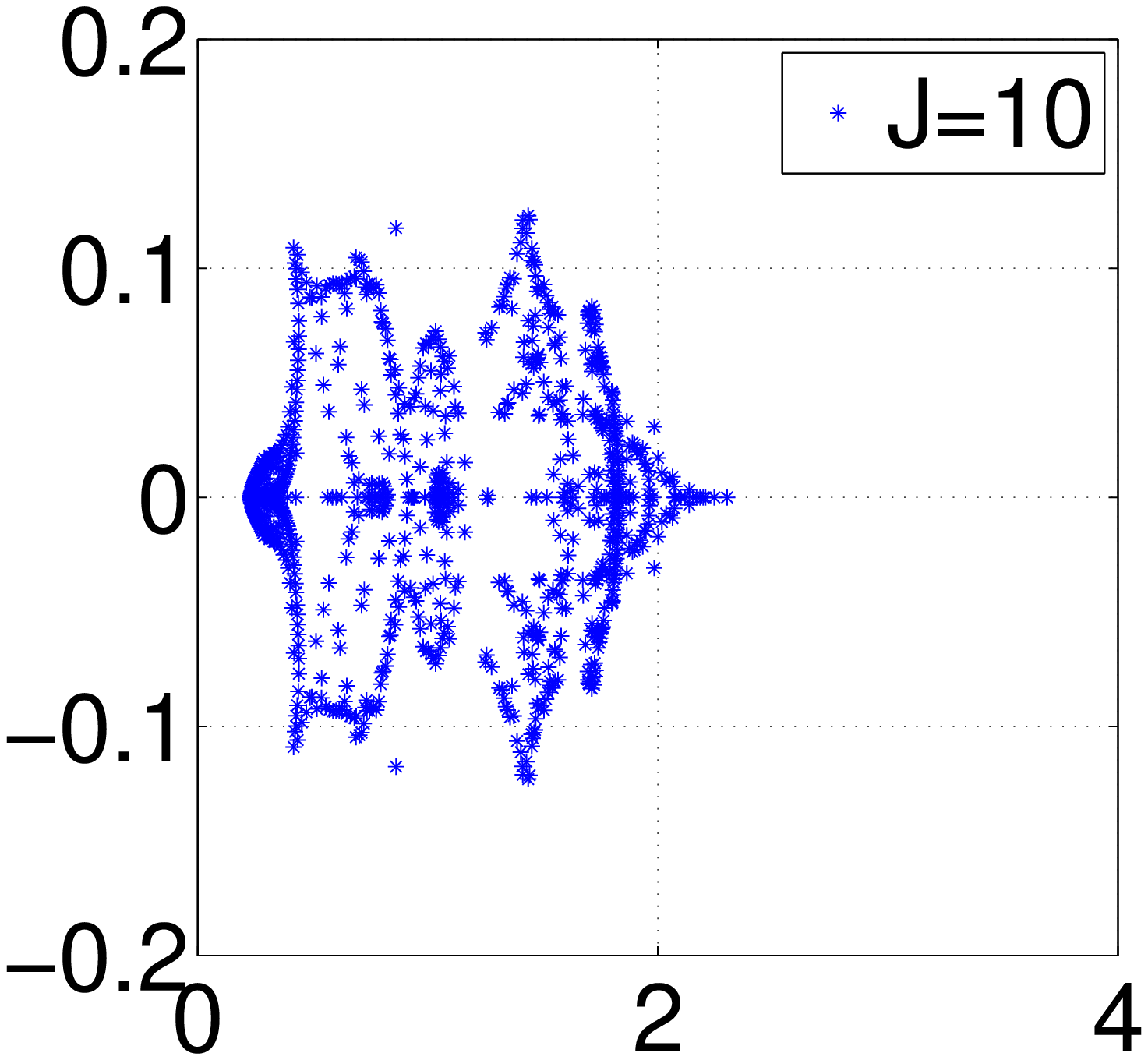}
\includegraphics[width=1.1in,height=1.1in,angle=0]{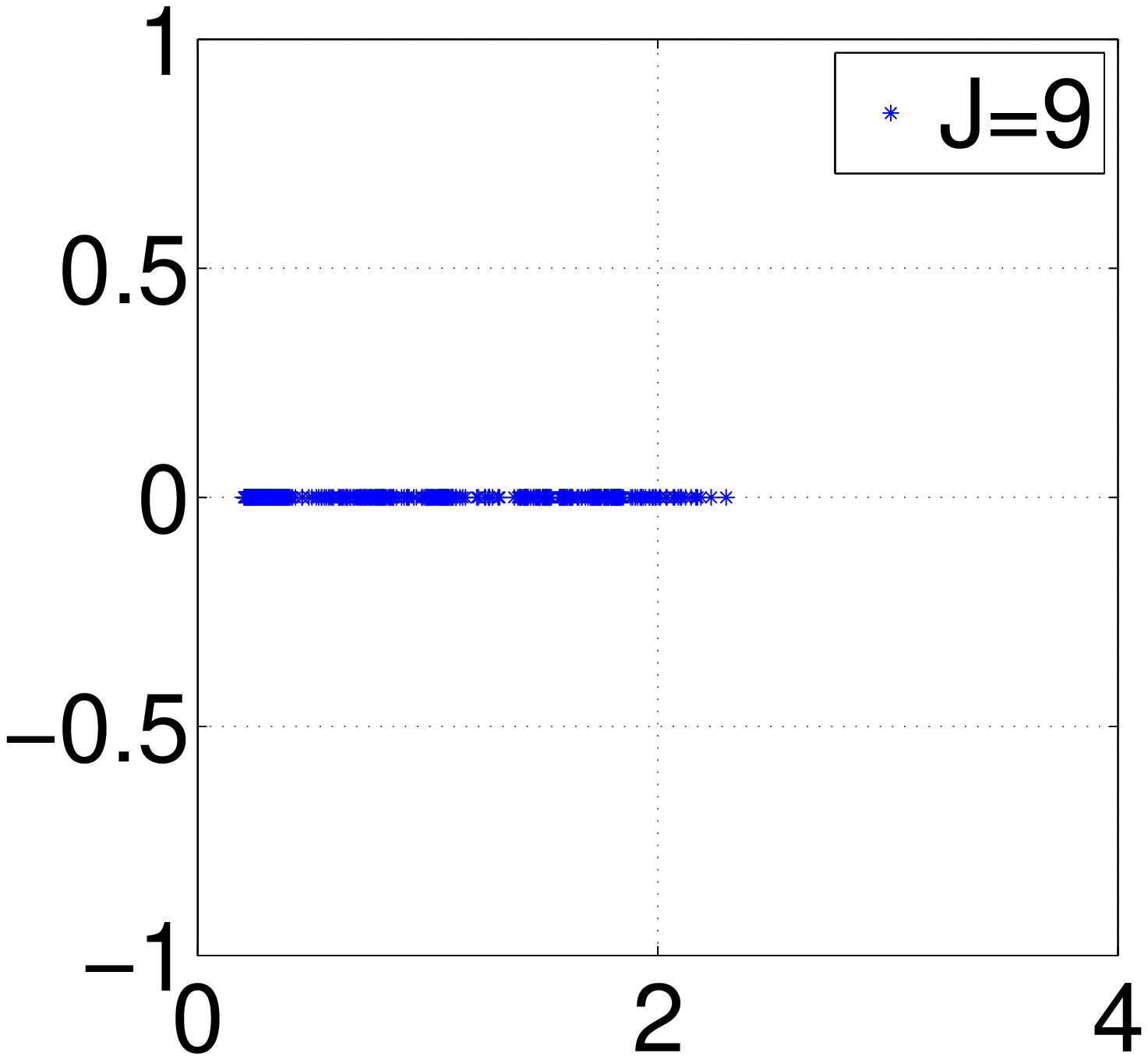}
\includegraphics[width=1.1in,height=1.1in,angle=0]{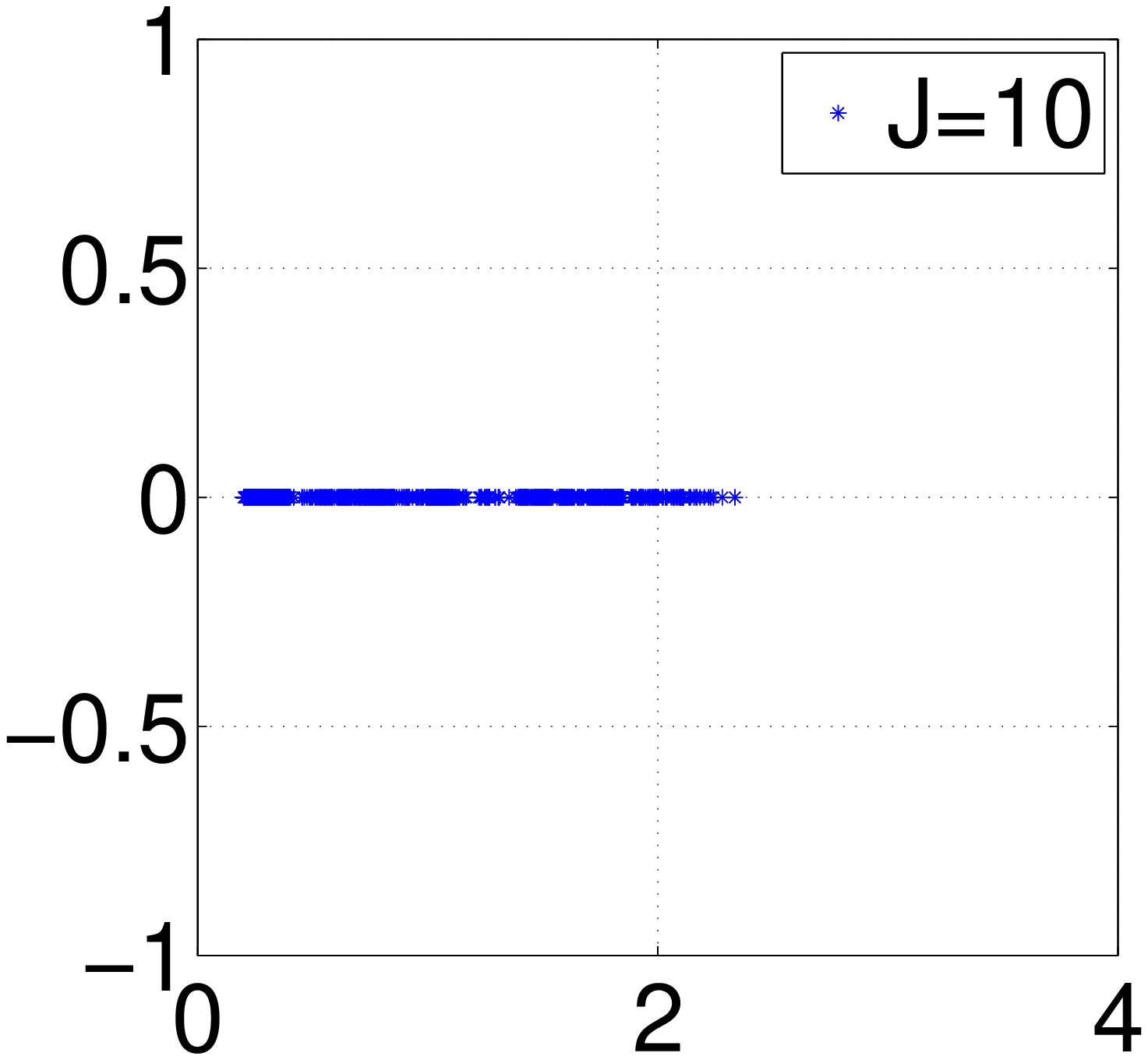}
\caption{Eigenvalue distribution of the preconditioned BVP systems with the interpolation wavelet (first line) and  the semiorthogonal wavelet (second line), respectively (the first two columns are for $p=1$ and the last two columns for  $p=1/2$).}\label{Fig-Precondtion2}
\end{center}
\end{figure}

To explore the effectiveness of this preconditioned system, we numerically solve BVP (\ref{eq:1.1}) with
\[ f=\frac{1}{\Gamma(1+\beta)}\left(-2x^{\beta}+\beta x^{\beta-1}\right),\]
and
\[f= \frac{1}{2\Gamma(1+\beta)}\left(-2x^{\beta}+\beta x^{\beta-1}-2(1-x)^{\beta}+\beta (1-x)^{\beta-1}\right),\]
for $p=1$ and $p=1/2$, respectively. We use  GMRES and Bi-CGSTAB to solve the algebraic system before and after preconditioning, and the numerical results are given in Tables \ref{tab:2_3} and \ref{tab:2_4}, respectively.
The comparisons for the two methods are made almost with the same $L_2$ approximation error, not listed in the tables. The stopping criterion for solving the linear systems is
 \[ \frac{\|r(k)\|_{l_2}}{\|r(0)\|_{l_2}}\le 1e-8,\]
 with $r(k)$ being the residual vector of linear systems after $k$ iterations. It should be noted that the GMRES method for $p=1$  without preconditioning stops before reaching this criterion. In fact, by the two-dimension FWT and the properties of tensor product, these proposed preconditioner can also easily apply to two dimension and it also works well for the algebraic systems generated by the finite difference methods, e.g., \cite{Meerschaert:04, Tian:15}.
 \begin{table}[!h t b p]\fontsize{6.0pt}{11pt}\selectfont
\begin{center}
 \caption{Numerical results of the BVP (\ref{eq:1.1}), solved by GMRES and Bi-CGSTAB, with $q=0,\,\kappa_{\beta}=1,\,\beta=1/5$, and $d=2$.}
\begin{tabular} {|c|cc|cc|cc|cc|}  \hline
$J$  &\multicolumn{2}{c|}{$p=1$, GMRES} &\multicolumn{2}{c|}{$p=1/2$, GMRES}  &\multicolumn{2}{c|}{$p=1$, Bi-CGSTAB}    &\multicolumn{2}{c|}{$p=1/2$, Bi-CGSTAB} \\\cline{2-9}
         &       Iter &   CPU(s)      & Iter & CPU(s)        & Iter & CPU(s)   &Iter & CPU(s)\\ \hline

    $8$  &      2.5500e+02   &  0.3443    &  1.1800e+02        & 0.0915    &  2.6350e+02    & 0.0672            & 1.1700e+02 & 0.0303 \\
    $9$  &     5.1100e+02  &  1.5217  & 2.2000e+02            &  0.3230        &  5.3550e+02      &0.2408        &2.0950e+02 &0.1012 \\
    $10$  &    1.0230e+03  & 7.4783   & 4.1200e+02              & 1.3219         & 1.1665e+03    & 0.6731           &3.9150e+02 &0.2280\\
    \hline
   \end{tabular}\label{tab:2_3}
\end{center}
\end{table}
\begin{table}[!h t b]\fontsize{7.0pt}{12pt}\selectfont
\begin{center}
 \caption{ Numerical results of the BVP (\ref{eq:1.1}), solved by the preconditioned GMRES and Bi-CGSTAB, with $q=0,\, \kappa_{\beta}=1,\,\beta=1/5$, $d=2,$ and $J_0=0$.}
\begin{tabular} {|cc|cc|cc|cc|cc|}  \hline
 $p$ & $J$    &\multicolumn{2}{c|}{GMRES, Inte-}  &\multicolumn{2}{c|}{GMRES, Semi-} &\multicolumn{2}{c|}{Bi-CGSTAB, Inte-}  &\multicolumn{2}{c|}{Bi-CGSTAB, Semi-}  \\\cline{3-10}
      &       & Iter&  CPU(s)      & Iter& CPU(s)      & Inter  & CPU(s)    & Iter &CPU(s)          \\
      \hline
      & $8$   &  13.0    &   0.0094    &27.0&  0.0203                    &8.0     & 0.0077   &19.0  &  0.0198     \\
 $p=1$& $9$   &  13.0    &   0.0115    &28.0&   0.0258                     &9.5     & 0.0133  &20.0  & 0.0272    \\
      & $10$  &  13.0    &   0.0209    &28.0&   0.0452                     &9.5      & 0.0149  &22.0  & 0.0376 \\
      \hline
      &$8$    &  9.0    & 0.0075     &25.0&   0.0227                     & 6.5 & 0.0064  &18.0  &0.0201       \\
$p=1/2$& $9$  &  9.0   &  0.0084     &26.0&   0.0248                    & 7.5 & 0.0087  &20.0    & 0.0267   \\
     & $10$   &  9.0   & 0.0163   &26.0&   0.0370                     & 8.0 &  0.0126  &21.0   & 0.0363    \\
    \hline
   \end{tabular}\label{tab:2_4}
\end{center}
\end{table}

Secondly, we use the MMG to solve the fractional IBVPs (\ref{eq:1.1}) with the exact solution $u(x,t)=\exp(-t)(x^{\nu}-x^2)$, $q=\kappa_{\beta}=1$, and the suitable source term and initial condition. It can be noted that because of the constant diagonal elements of the stiffness matrix, the Richardson and the Jacobi iterations used in MMG are actually equivalent. For $m_1(j)=m_2(j)=1, J_0=3$, the numerical results of CN-MMG are given in Tables \ref{tab:10} and \ref{tab:11},
where `Iter' denotes the average iteration times and `CPU(s)'  the computation time also including the time of the calculation of coefficient matrix $B_j^{n+1},j=J_0,\cdots J$ and the right term. The initial iteration vector at $t_{n+1}$ is chosen as the approximation at $t_n$, and the stopping criterion is
\[\left\|c_J^{n+1,l}-c_J^{n+1,l-1}\right\|_{\infty}\le 2^{-J/2}\times ( 1e-9), \]
where $c_J^{n+1,l}$ is the approximation vector after the $l$ iteration. Of course, the FFT and the FWT are used to accelerate the process. `Gauss(s)' denotes the computation time of the Gaussian elimination method; for fair comparison, the FFT is also used to get the matrix-vector product appeared in the right-hand term at the time $t_{n+1}$.
\begin{table}[!h t b p]\fontsize{5.0pt}{11pt}\selectfont
\begin{center}
 \caption{Numerical results of the IBVP (\ref{eq:1.1}), solved by CN-MMG, with $q=\kappa_{\beta}=1,\,p=1/2$, $\nu=1,\,T=1,$ and $\Delta t=1/2^J$.}
\begin{tabular}{|c c|c c c c|c c c c|}
 \hline
$\omega$           & $J$     &\multicolumn{4}{c|}{$\beta=7/10$}                       &\multicolumn{4}{c|}{$\beta=2/10$}    \\ \cline{3-10}
                    &        &$L_2$-Err    & Iter & CPU(s) & Gauss(s)              &$L_2$-Err    & iter & CPU(s)  &Gauss(s)\\
\hline
                   &$8$      &5.6512e-07    & 5.03  & 1.0782   & 0.2541                   & 7.7427e-07   & 7.97   & 1.6402     &0.2692          \\
$4/(5\lambda_{\max})$&$9$    &1.3673e-07    & 5.00  &3.2092    & 1.6474                   & 1.8554e-07   & 7.00   & 4.3035     &1.6994        \\
                   &$10$     &3.3486e-08    & 4.32  &9.2786   & 18.5861                   & 4.1703e-08   & 6.03   & 12.2416    &18.9016        \\
 \hline
                   &$8$      &5.6512e-07     & 4.00   & 0.8895    &0.2528                   & 7.7431e-07   & 6.00    & 1.2449   & 0.2493        \\
$6/(5\lambda_{\max})$&$9$    &1.3673e-07     & 4.00   & 2.6654   &1.6631                    & 1.8550e-07   & 5.01    & 3.2196   & 1.7100        \\
                   &$10$      &3.3488e-08    & 3.59   &8.0329    &18.7333                   & 4.1709e-08   & 4.90    & 10.3390  & 18.4800      \\
 \hline
\end{tabular}\label{tab:10}
\end{center}
\end{table}

For $p=1/2$, the coefficient matrix is symmetric. And if we choose $\omega<1/\lambda_{\max},\lambda_{\max}=\left(\sigma\left({\rm diag}(B_J^{n+1})B_J^{n+1}\right)\right)$,  then the MMG is convergent and the average iteration number is slightly affected by the choice of $\omega$.  It also seems that the restriction to $\omega$ can be relaxed to some extent in real computation. When $p\ne1/2$, even though there are no strict theoretical prediction, the numerical results show when $\omega\ge \lambda_{\max}$, the iteration may be divergent; see Table \ref{tab:11}. Here, we get the value of $\lambda_{\max}$ by the Matlab function {\em eigs}; it can also be estimated by the Gerschgorin Theorem or the Power method.
\begin{table}[!h t b]\fontsize{5.0pt}{11pt}\selectfont
\begin{center}
 \caption{Numerical results of the IBVP (\ref{eq:1.1}), solved by CN-MMG, with  $q=p=\kappa_{\beta}=1$, $\beta=7/10,\,T=1$, and $\Delta t=1/2^J$.}
\begin{tabular}{|c c|c c c c|c c c|c|}
  \hline
$\nu$         & $J$   &\multicolumn{4}{c|}{$\omega=2/(5\lambda_{\max})$}  &\multicolumn{3}{c|}{$\omega=4/(5\lambda_{\max})$ } &$\omega=6/(5\lambda_{\max})$\\
 \cline{3-10}
              &        &$L_2$-Err    & Iter & CPU(s)  &Gauss(s)                  & Iter &  CPU(s)    &Gauss(s)         &     \\
\hline
              &$8$    & 1.2500e-06   & 14.79     &2.9692   & 0.3550            & 10.95  &2.2017      &0.3667            & $no\  cvge.$     \\
$1$           &$9$    & 3.1242e-07  & 12.95      &7.6719     &3.0322            & 9.80    & 5.8060    &2.9866              & $no\  cvge.$     \\
              &$10$   & 7.9268e-08   &10.80      &20.3230    &31.3422           & 8.13    & 15.4748   &31.0381            & $no \ cvge.$     \\
 \hline
              &$8$   & 1.7059e-06   &13.47     & 2.7015   &0.4289                &10.44    & 2.1180   & 0.4300             & $no\  cvge.$      \\
$11/10$       &$9$   &5.0960e-07    &11.21     & 6.8185   &3.1898                &9.08     & 5.5673     & 3.1589           & $no\  cvge.$     \\
              &$10$  &1.5759e-07    &9.01      & 17.7328  &31.7698               &7.36     & 14.8017    & 31.9835            & $no\  cvge.$     \\
 \hline
\end{tabular}\label{tab:11}
\end{center}
\end{table}

\begin{example}\label{example3}\end{example} In this example, we focus on the previously proposed ad-hoc wavelet adaptive algorithms for the fractional PDEs. The BVP is solved by the biorthogonal wavelet bases produced by ${}^{3,3}\psi$ ($d=3,\tilde{d}=3$), and the IBVP by the semi-interpolation wavelet bases. We first consider the BVP (\ref{eq:1.1}),  the regularity of its exact solution, $u(x)=(1-x)^{11/10}-(1-x)$, is weak at the area close to the right boundary; and the parameters $\kappa_\beta=1, p=0$, and the source term
\[f(x)=-\frac{\Gamma(21/10)(1-x)^{\beta-9/10}}{\Gamma(\beta+1/10)}+\frac{(1-x)^{\beta-1}}{\Gamma(\beta)}.\]
In the algorithm, we take $J_0=3, \epsilon(j)=1e-5$. For every iteration step, the finally extended irregular indexes are obtained by firstly adding the children of all the significant indexes and then including two neighbors, i.e., the right and left neighbors, of each index of the extended irregular indexes.
When $\beta=1/2$, the sets of wavelet indices that corresponding to the adaptively chosen wavelets and the corresponding error $u-\widehat{u}_{J_0+m}$ are presented in Figures \ref{fig:G1}, where the blue bar denotes that we have used all the scaling bases in the coarest level $J_0$.  One can see that the algorithm in fact automatically recognizes the whereabouts of the boundary layer of the solution $u$, and adds wavelets locally to there. It also reveals that the newly added computational costs are spended  in the most needed place, and the large peaks of the errors are successively reduced.
Moreover, for different $\beta$, from the decreasing of the $L_2$ approximation error of the adaptive and uniform Galerkin schemes with the increasing of the freedom $N$ (the loglog coordinate) in Figure \ref{fig:G3}, one can see that the adaptive MGM is remarkably superior to the uniform MGM.
\begin{figure}[!h t b p]
\begin{center}
\includegraphics[width=2.3in,height=1.5in,angle=0]{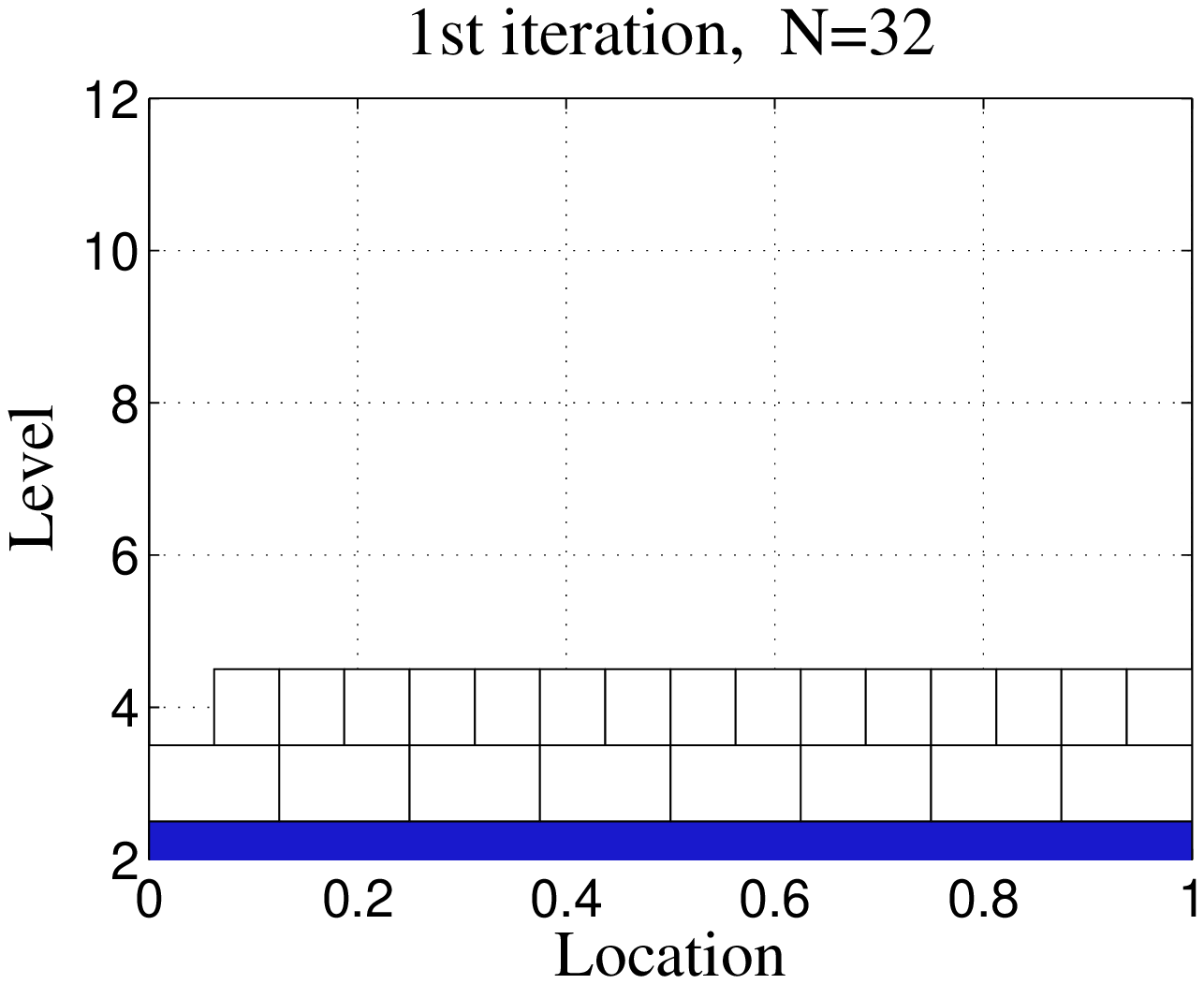}
\includegraphics[width=2.3in,height=1.5in,angle=0]{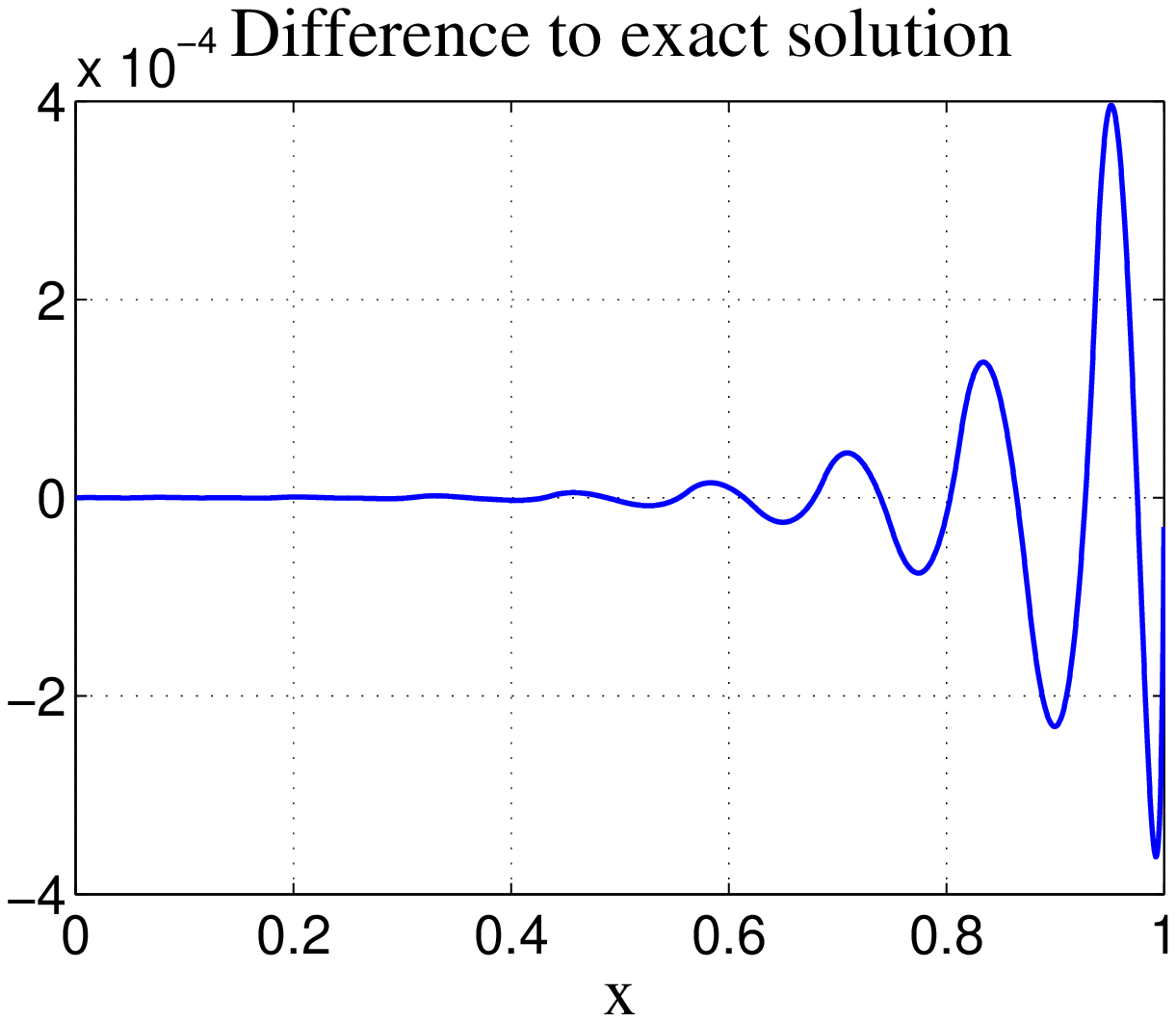}
\includegraphics[width=2.3in,height=1.5in,angle=0]{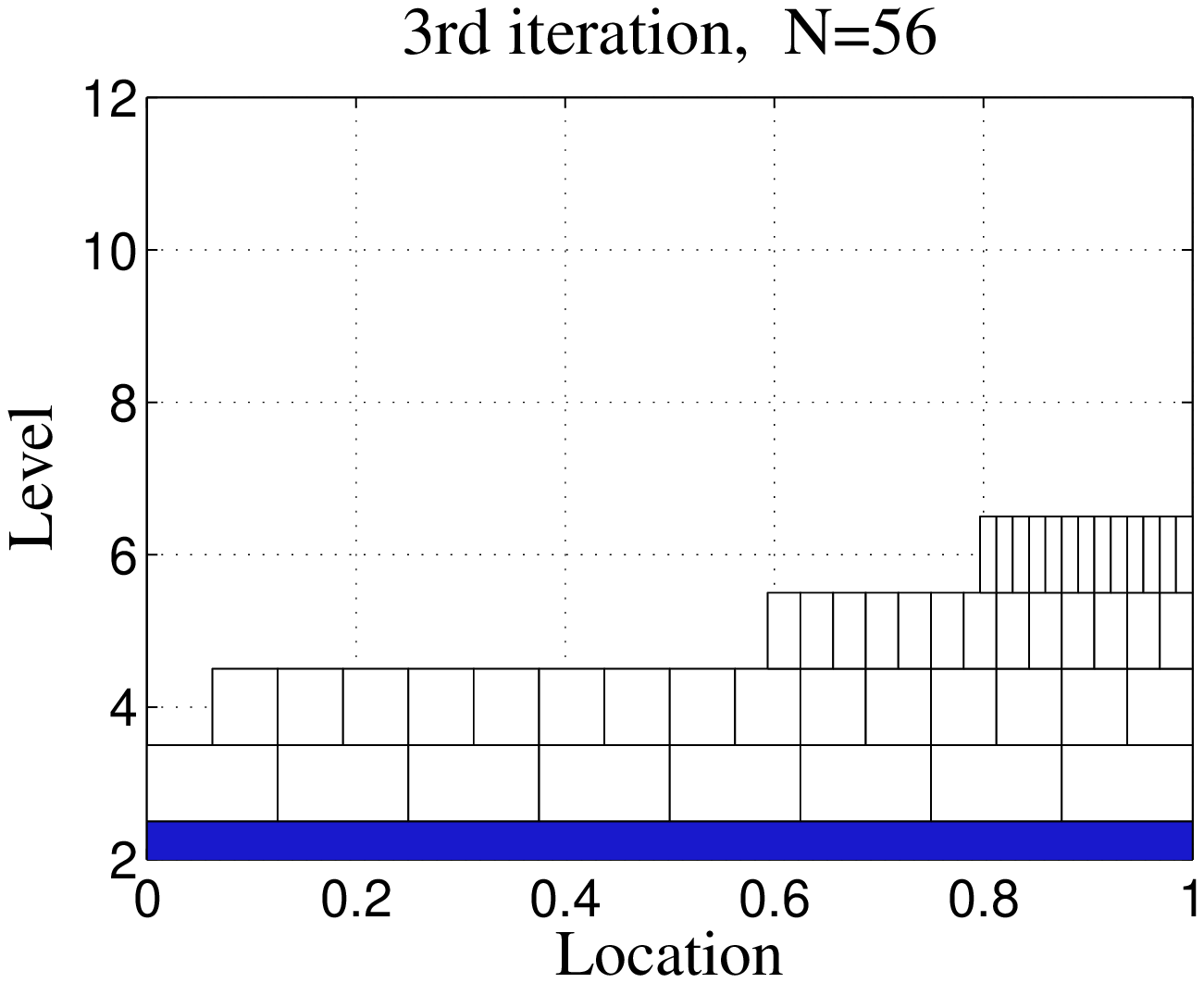}
\includegraphics[width=2.3in,height=1.5in,angle=0]{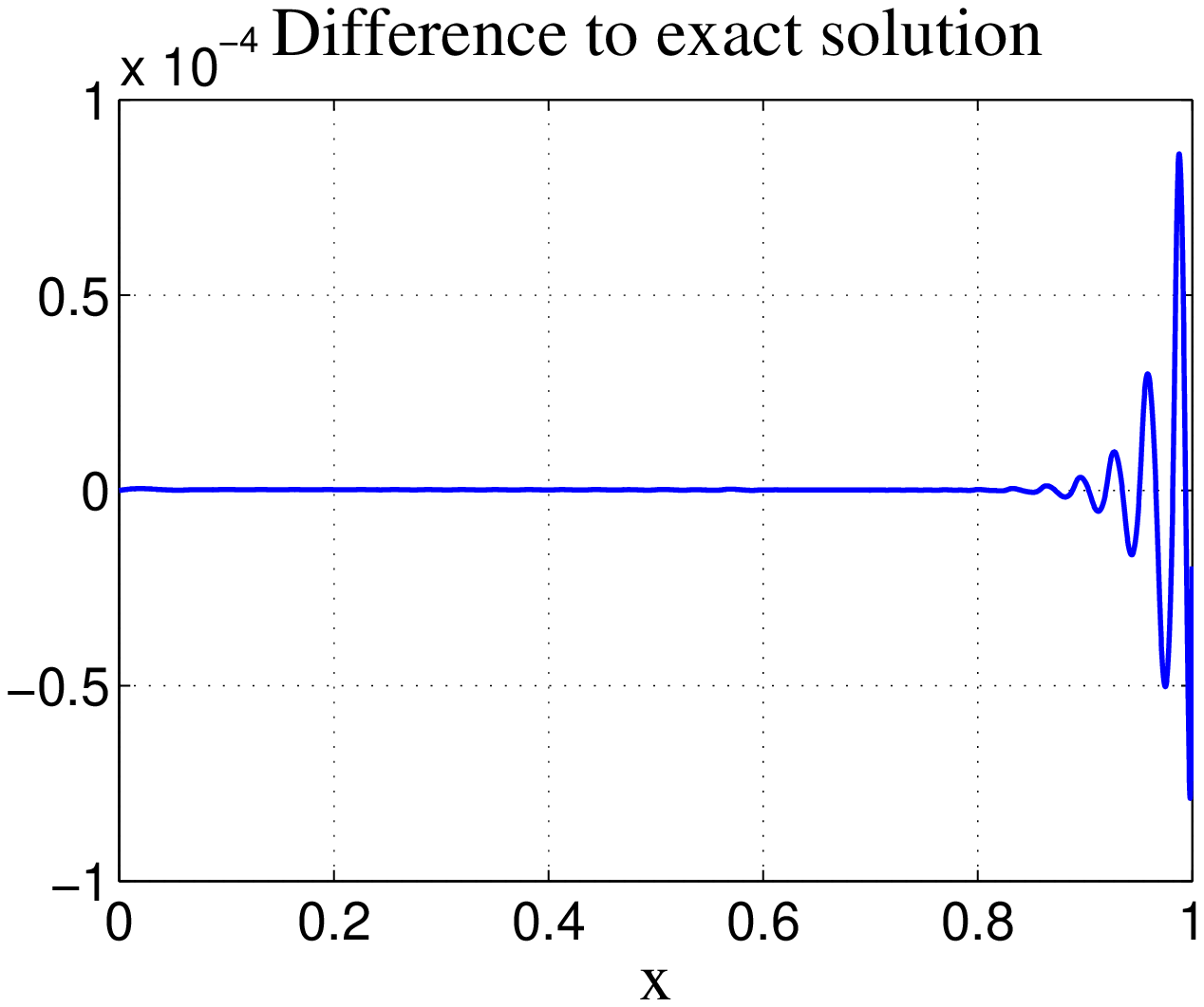}
\includegraphics[width=2.3in,height=1.5in,angle=0]{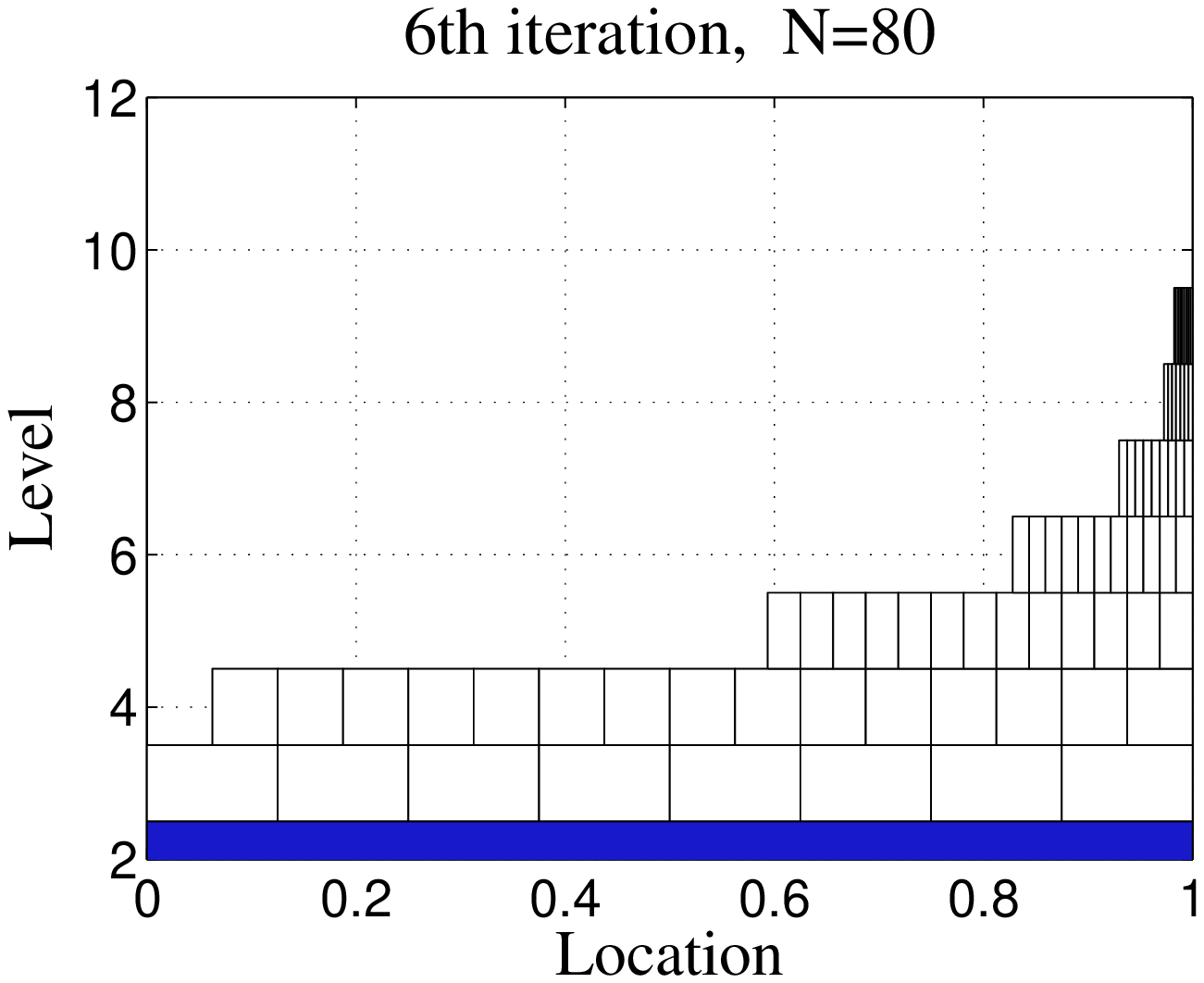}
\includegraphics[width=2.3in,height=1.5in,angle=0]{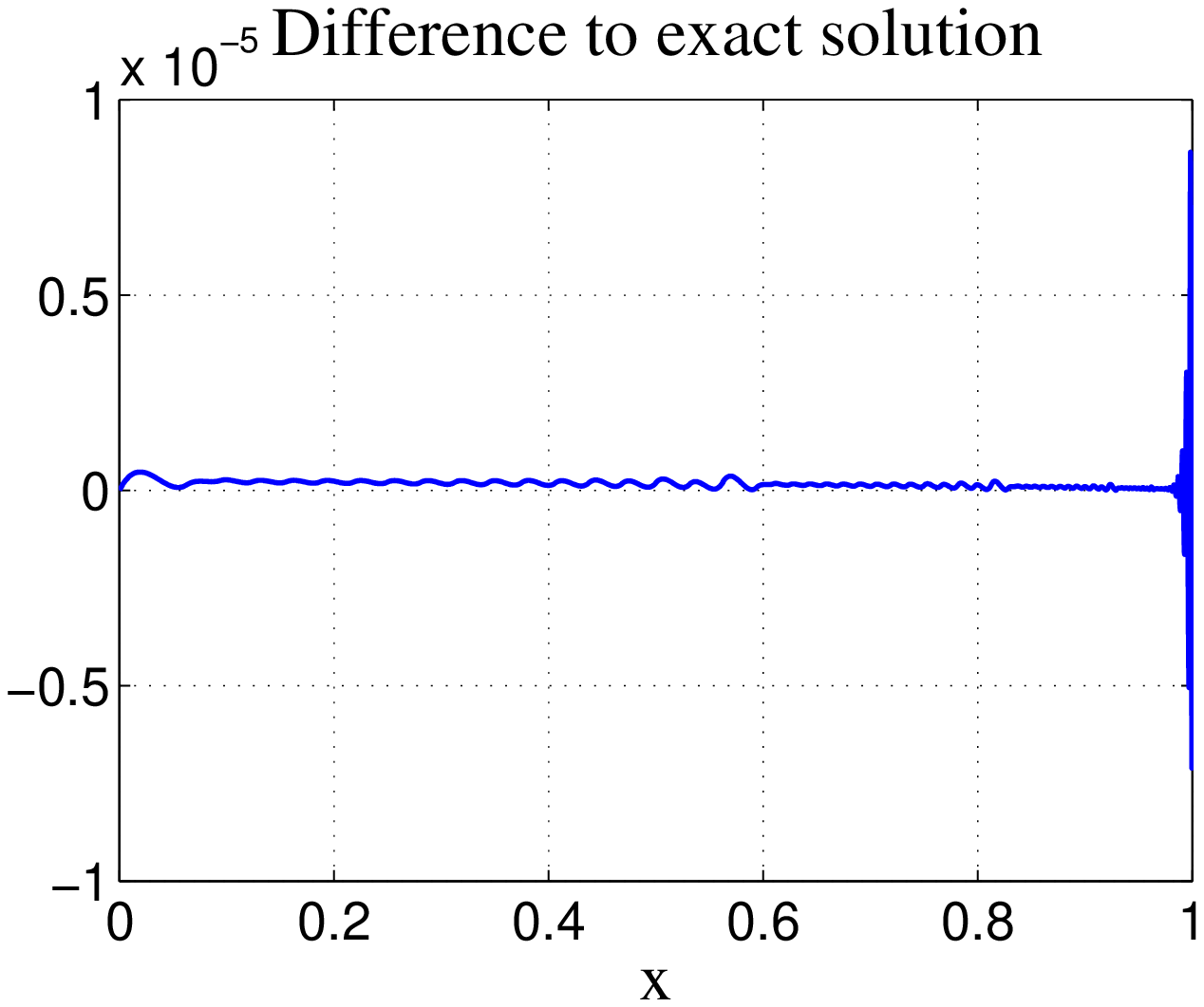}
\includegraphics[width=2.3in,height=1.5in,angle=0]{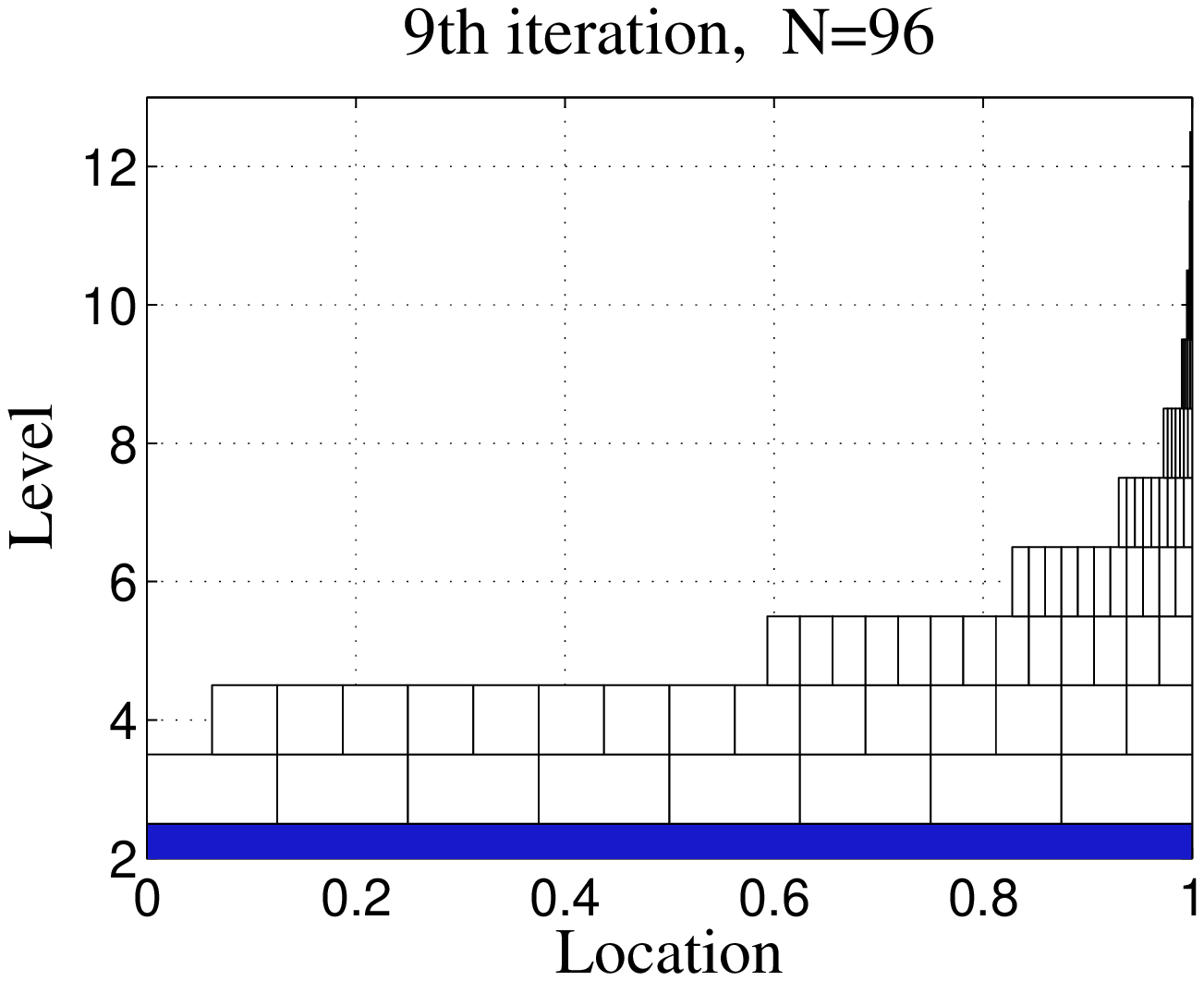}
\includegraphics[width=2.3in,height=1.5in,angle=0]{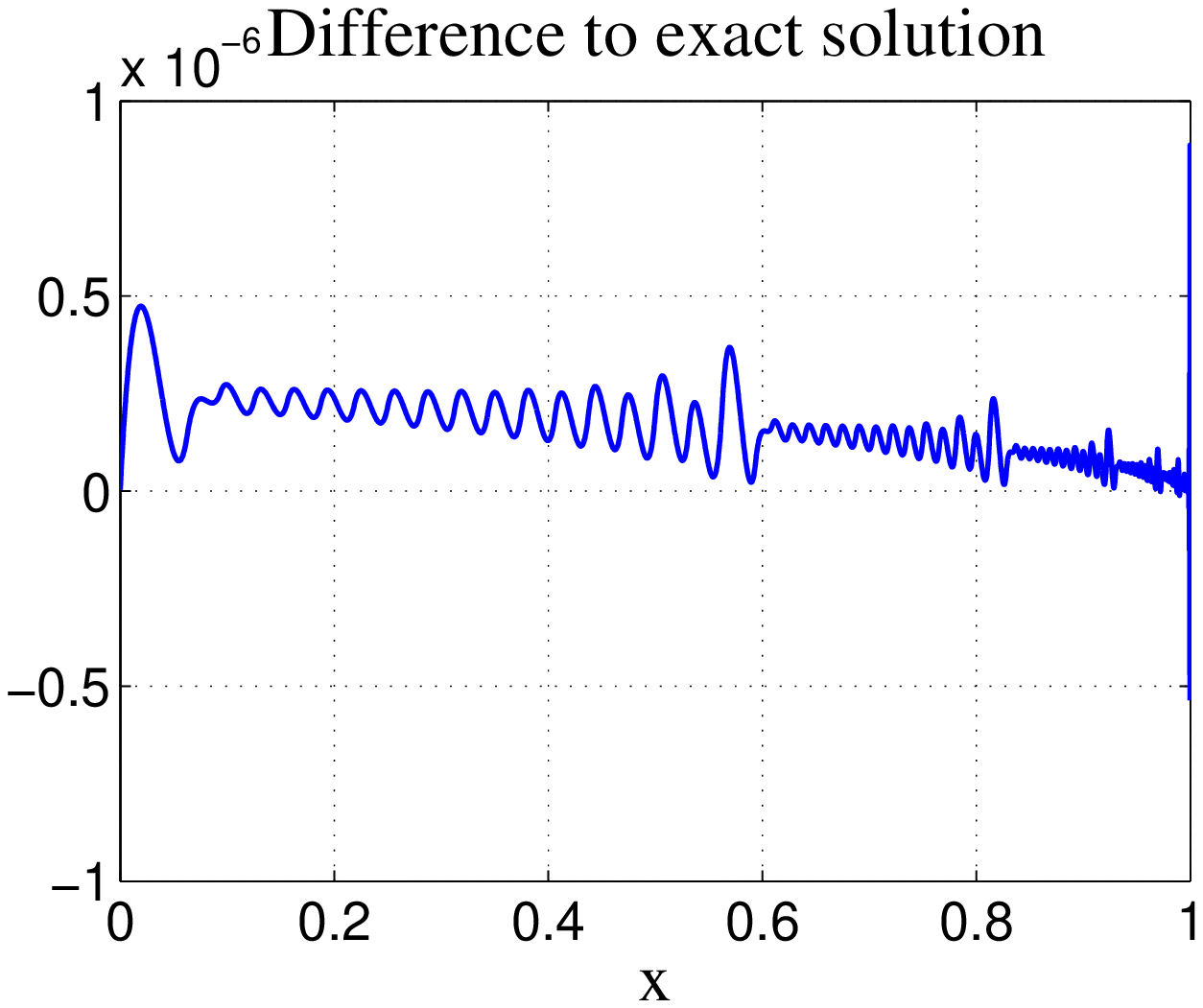}
\caption{Distribution of adaptive wavelet bases and curve of the approximation error gotten by Algorithm \ref{AD-STATIC}.} \label{fig:G1}
\end{center}
\end{figure}
\begin{figure}[!h t b p]
\begin{center}
\includegraphics[width=2.3in,height=1.5in,angle=0]{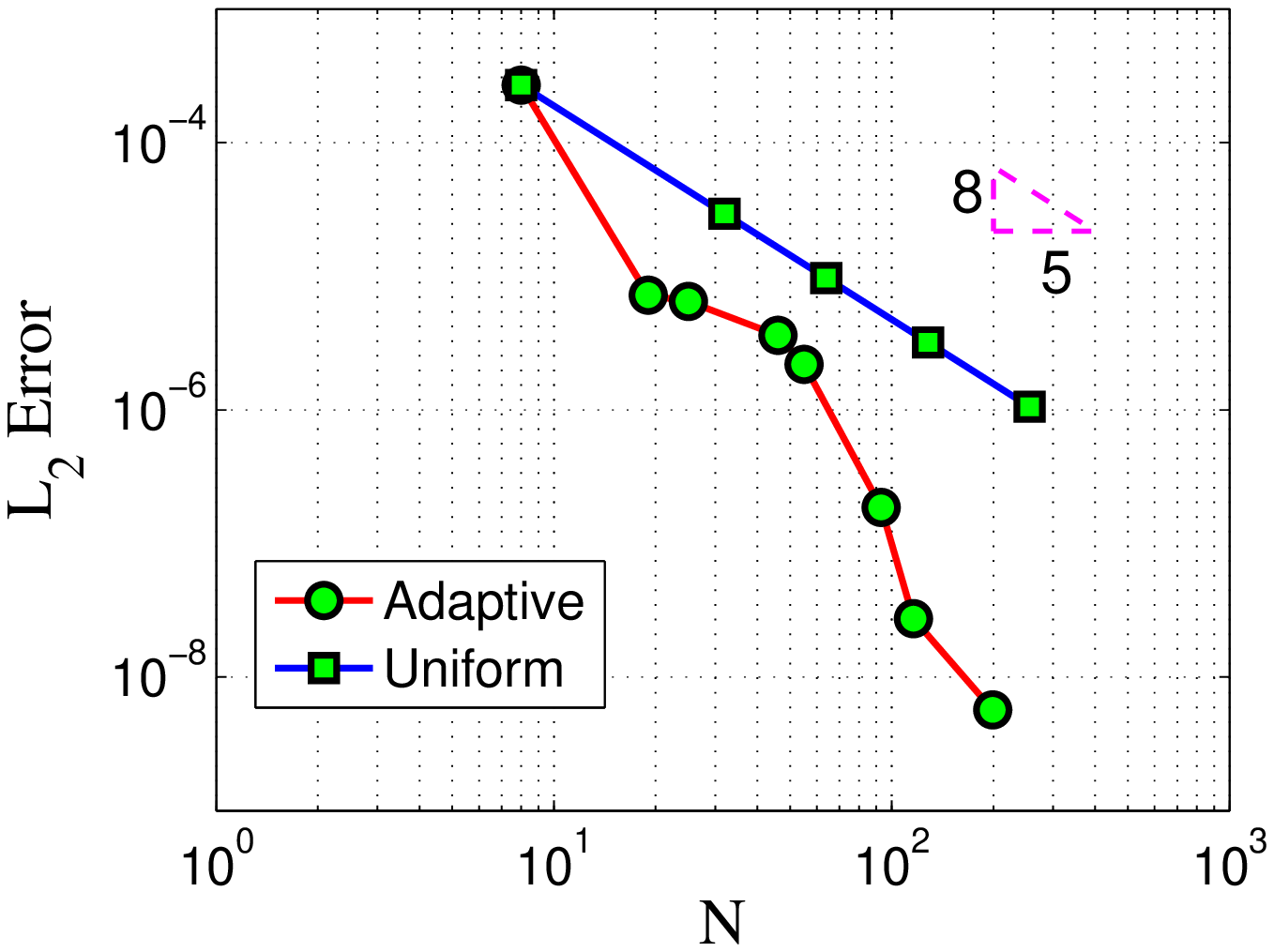}
\includegraphics[width=2.3in,height=1.5in,angle=0]{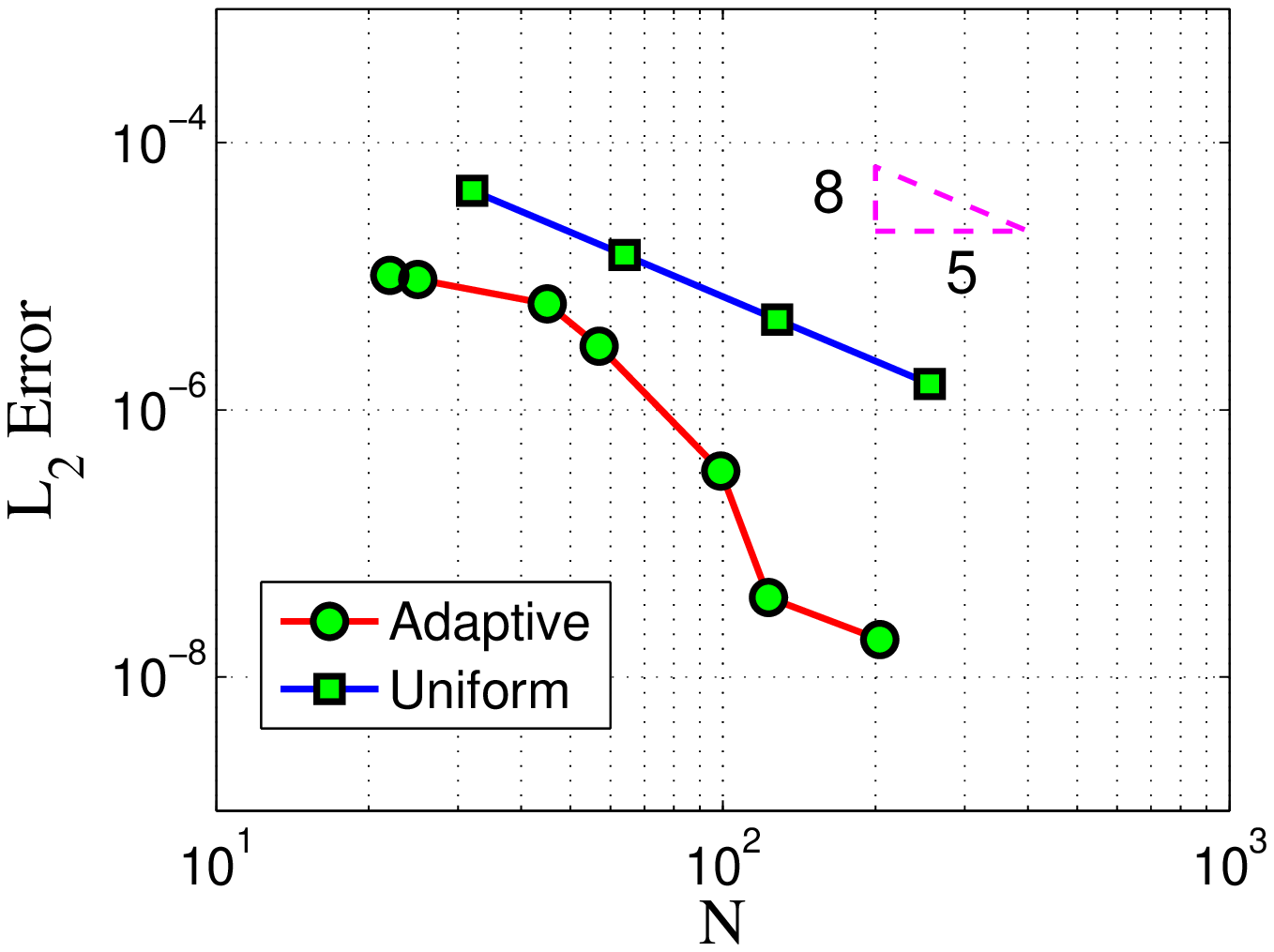}
\caption{$L_2$ errors versus freedom $N$ for the adaptive  and the uniform  Galerkin approximations with $\beta=1/2$ (left) and $\beta=4/5$ (right), respectively.}\label{fig:G3}
\end{center}
\end{figure}

Secondly, we consider the IBVP (\ref{eq:1.1}) with $\kappa_{\beta}=p=1,\,\beta=5/10$, the initial condition $u(x,0)=x^4(1-x)$, and the source term
\begin{eqnarray*}
f(x,t)=\exp(3t)x^{20t+4}(1-x)(3+20\ln x) +\exp(3t)\frac{\Gamma(20t+5)}{\Gamma(20t+3+\beta)}x^{20t+2+\beta}\left(\frac{20t+5}{20t+3+\beta}x-1\right).
\end{eqnarray*}
Its exact solution is $u=\exp(3t)x^{20t+4}(1-x)$, which has a strong gradient at somewhere as shown in Figure \ref{fig:C2} (left).
In the computation, based on Algorithm \ref{AD-TIME} the semi-interpolation adaptive wavelet collocation method is used;
and the time step  $\Delta t=1/2^{2J_{max}}$, $J_0=3, J_{max}=10, \epsilon(j)=1e-5$, and $T=1$.  In this adaptive algorithm, for every time step,
the index extension techniques being used are the same as the ones for the BVP, and the chosen collocation points are just the ones corresponding to
the reserved wavelet bases. For $t=1$, the adaptive solution and the distribution of semi-interpolation wavelets  are displayed
in Figure \ref{fig:C1}. Further seeing the global picture, Figure \ref{fig:C2} (right), one  can easily notice that the high level wavelets and collocation points mainly concentrate on the area with steep gradient, being exactly as what we have desired.
\begin{figure}[!h t b p]
\begin{center}
\includegraphics[width=2.4in,height=1.6in,angle=0]{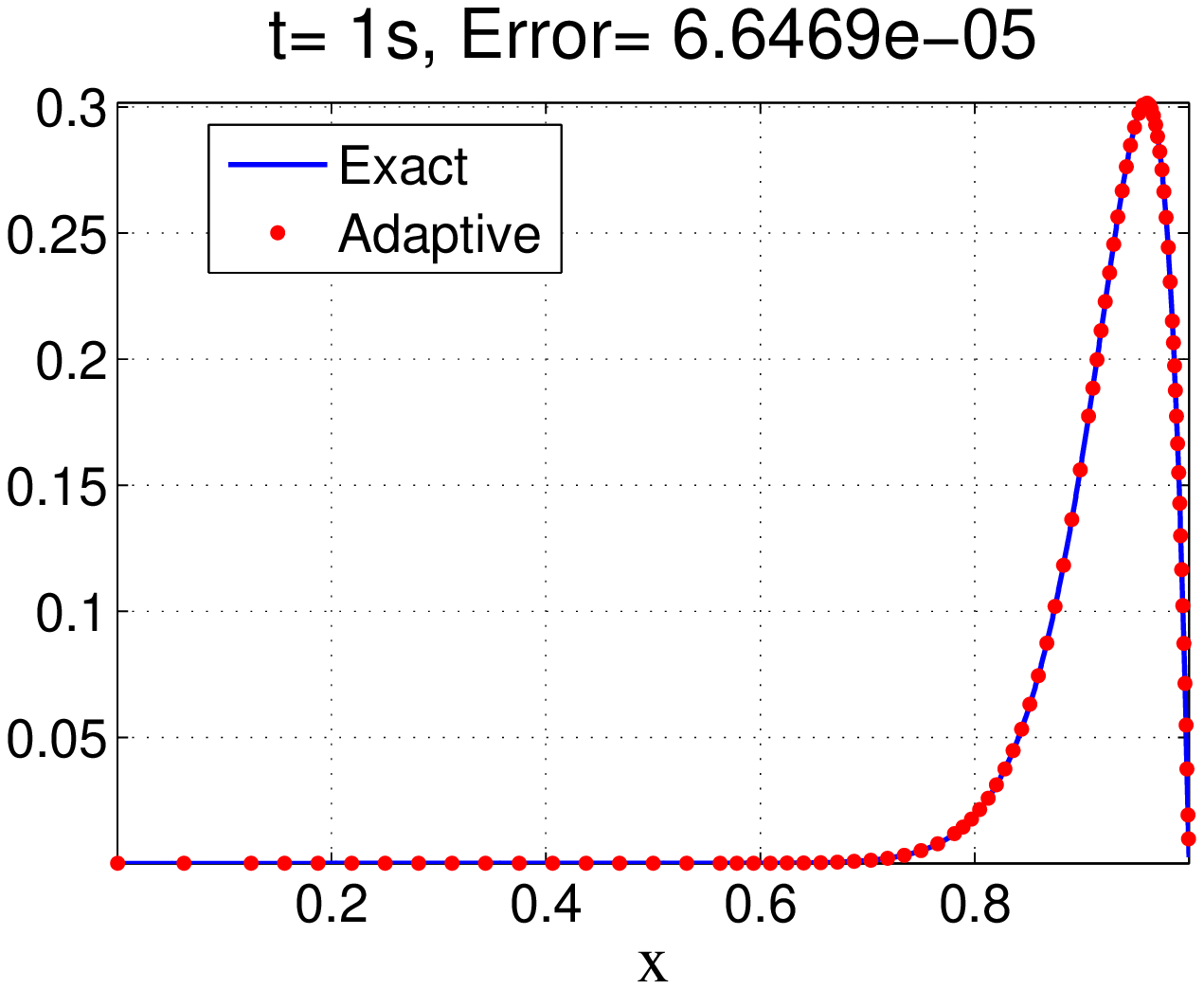}
\includegraphics[width=2.4in,height=1.6in,angle=0]{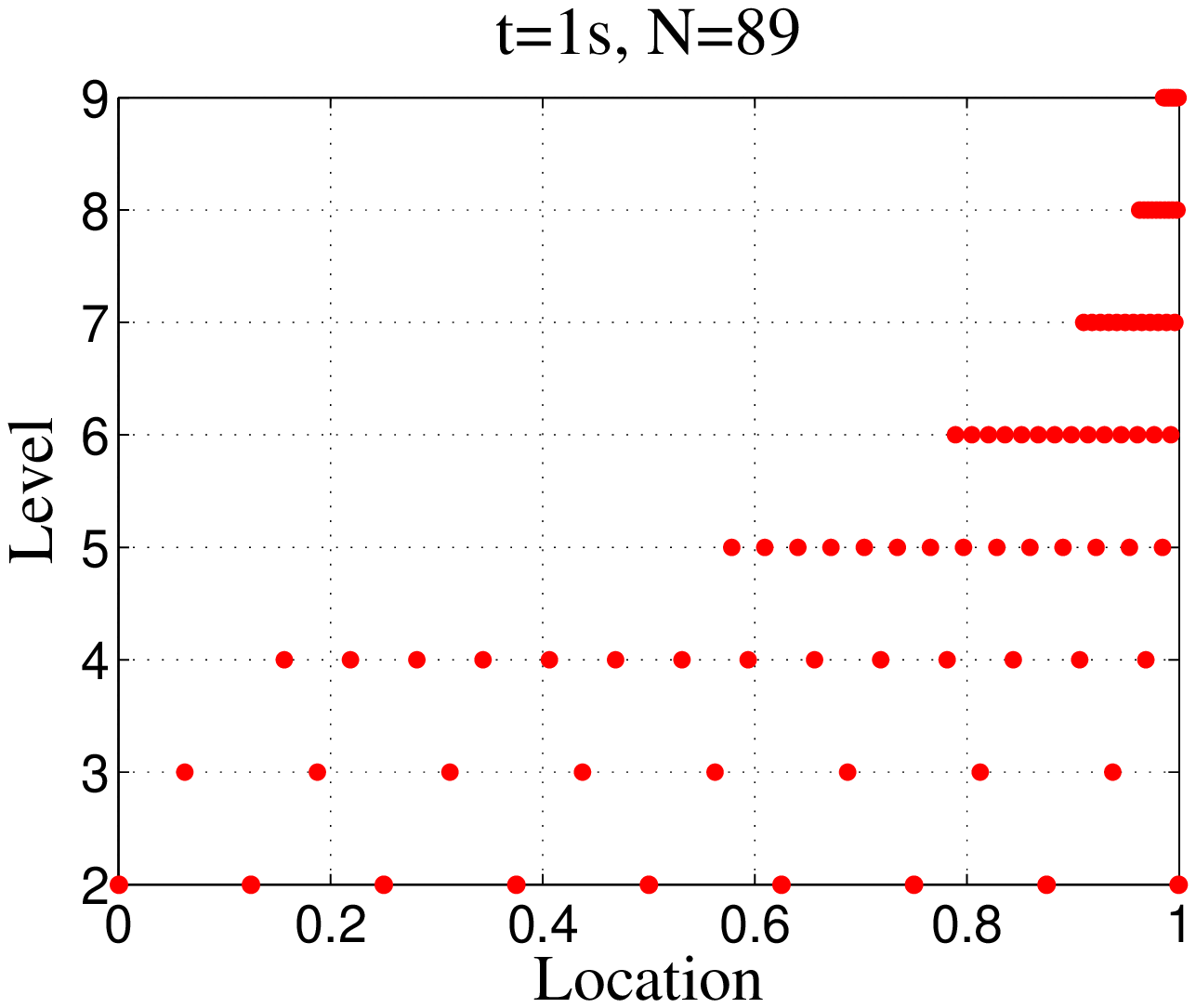}
\caption{The adaptive solution ($t=1$) and the corresponding distribution of the semi-interpolation wavelets  gotten by Algorithm \ref{AD-TIME}.} \label{fig:C1}
\end{center}
\end{figure}
\begin{figure}[!h t b p]
\begin{center}
\includegraphics[width=2.4in,height=1.6in,angle=0]{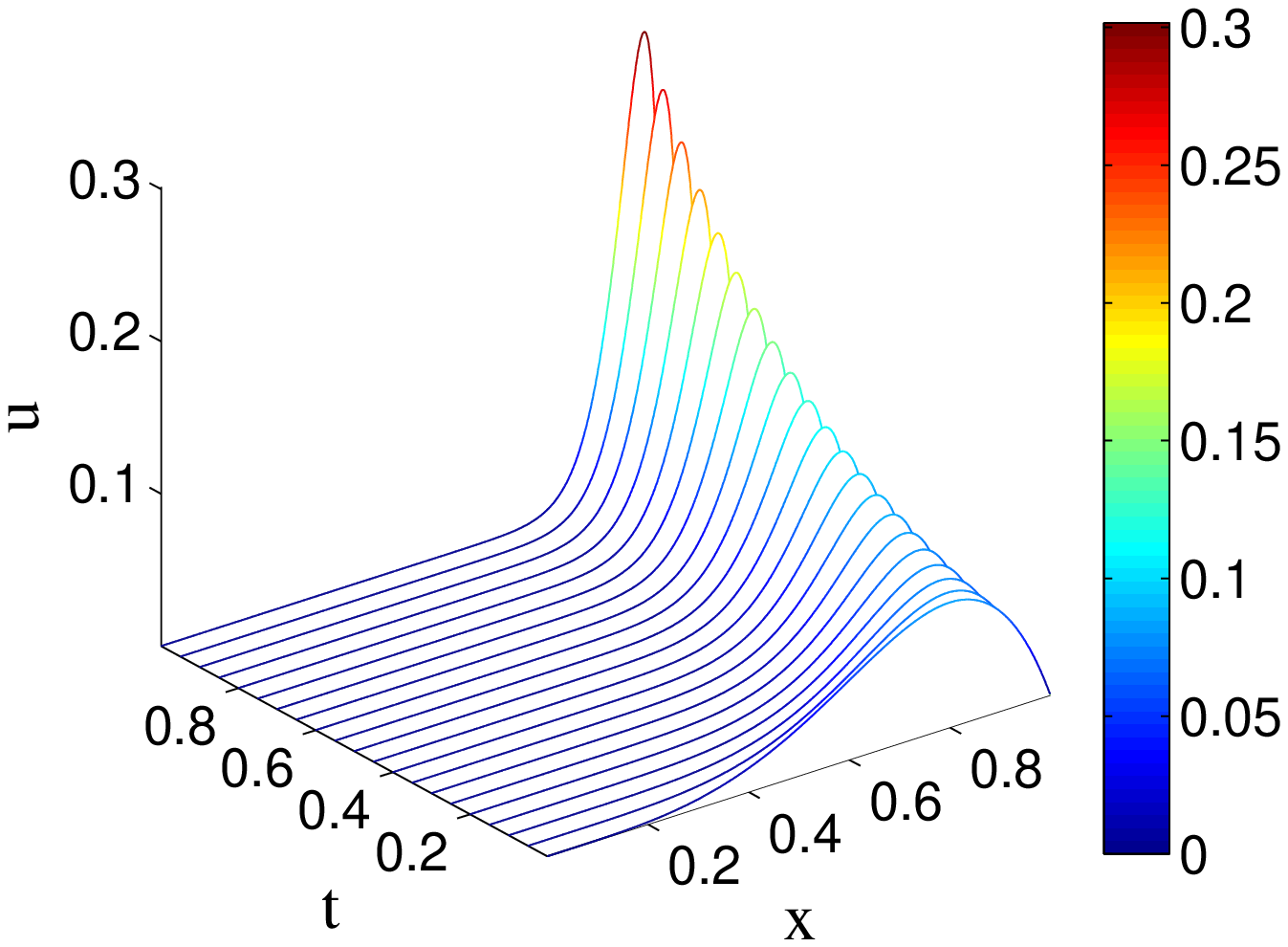}
\includegraphics[width=2.4in,height=1.6in,angle=0]{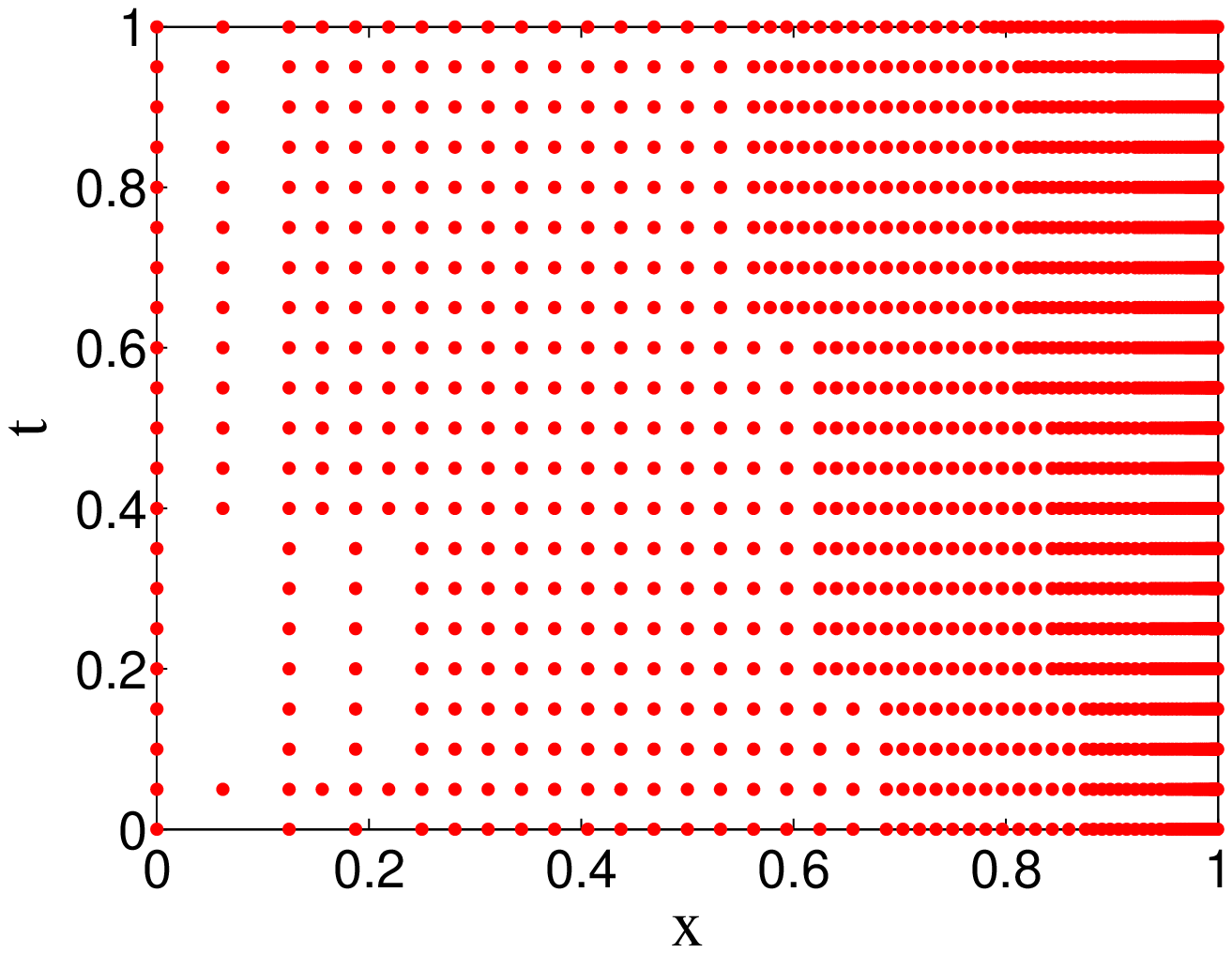}
\caption{Evolution of the solutions and corresponding distribution of the collocation points gotten by Algorithm \ref{AD-TIME}.} \label{fig:C2}
\end{center}
\end{figure}

\section{Conclusion and Discussion}
This paper focuses on digging out the potential benefits, providing the techniques, and performing the theoretical analysis and extensive numerical experiments in solving the fractional PDEs by wavelet numerical methods. The multiscale (wavelet) bases show their strong advantages in treating the fractional operators which essentially arise from the multiscale problem. Even the scaling bases also display their powerfulness in saving computational cost when generating stiffness matrix, i.e., by using the scaling bases, the stiffness matrix has the Toeplitz structure. The way of generating effective preconditioner is presented for time-independent problem and multigrid scheme for time dependent problem is detailedly discussed. We numerically show that the heuristic wavelet adaptive scheme works very well for fractional PDEs; in particular, it is still easy to get the local regularity indicator even for the fractional (nonlocal) problem; and the algorithm descriptions are provided.

After finishing this work, one of the directions of our further research appears, i.e., applying the wavelet compression property to fractional operator.
A key difference between the fractional and classical operators is that the former is non-local, and then both the matrixes generated by the scaling and the multiscale bases are no longer sparse. Fortunately, the wavelet compression not only allows one to obtain a sparse representation of functions, but it seems also effective for the fractional operators. Considering the discretization of the operator:
\[{\bf A}u=-D\left(\frac{2}{3}{}_0D_x^{-\beta} + \frac{1}{3} {}_xD_1^{-\beta}\right)Du\]
 in the approximation space $S_J$ with $J=10$, we first compute the matrix $A_J$ or $\hat{A}_J$ (here the multiscale wavelet bases also have been normalized with $D$, proposed in Section 3 ). Then we get the compressed matrix by setting all entries of $A_J$ or $\hat{A}_J$ with modulus less than $\epsilon=10^{-4}\times 2^{-J}$ to zeros. The comparison results are displayed in Table \ref{tab:115} and Figure \ref {fig:6.1} ,  where ($\cdot\,\%$) denotes the percentage of the non-zero entries of the compressed matrix. It can be seen that many entries in $\hat{A}_J$ are so small that they can be omitted to retrieve the famous finger structure, whereas essentially all entries in $A_J$ are significant. In the future, we will investigate the effective ways of using wavelet compression to get the paralleled sparse approximate inverse (SPAI) preconditioner and to perform the low-cost multiscale matrix-vector product.
\begin{figure}[!h t b]
\begin{center}
\includegraphics[width=2.4in,height=1.50in,angle=0]{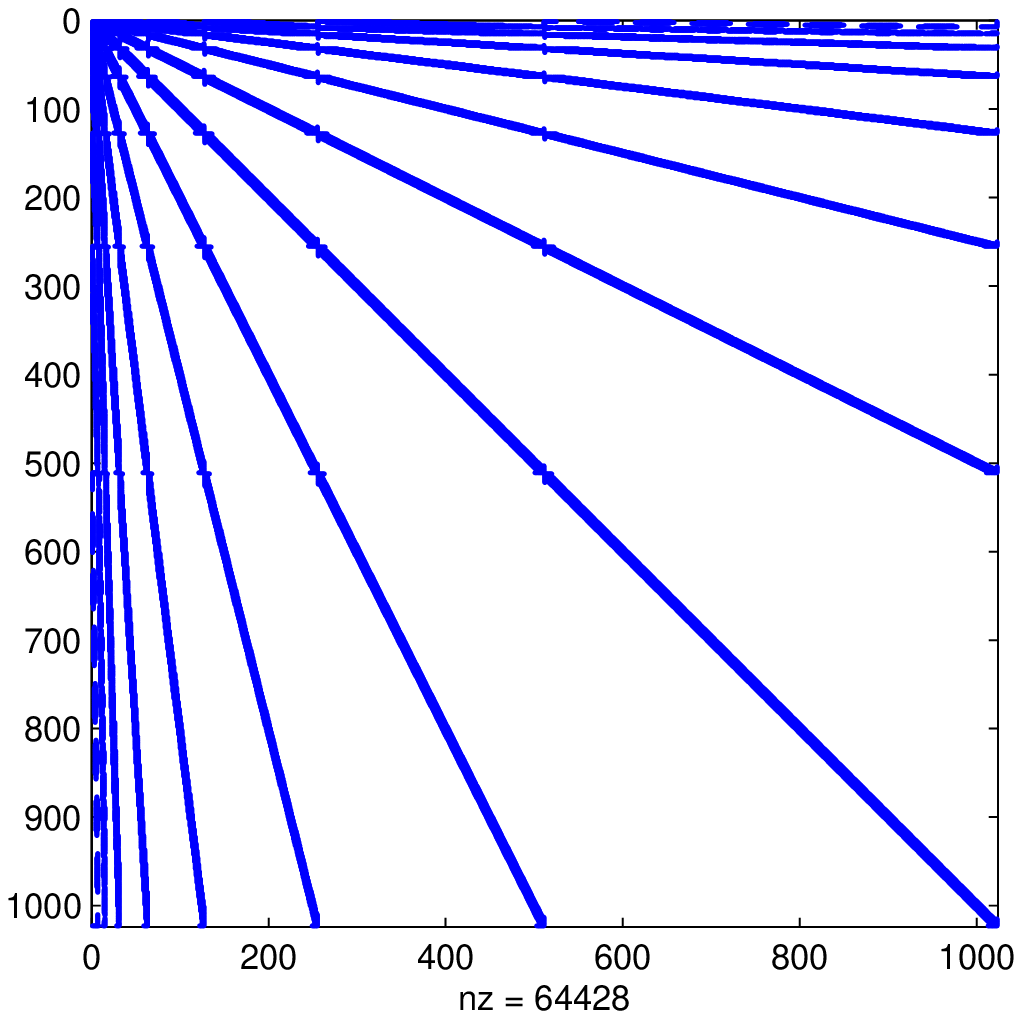}\hspace{-4pt}
\includegraphics[width=2.4in,height=1.50in,angle=0]{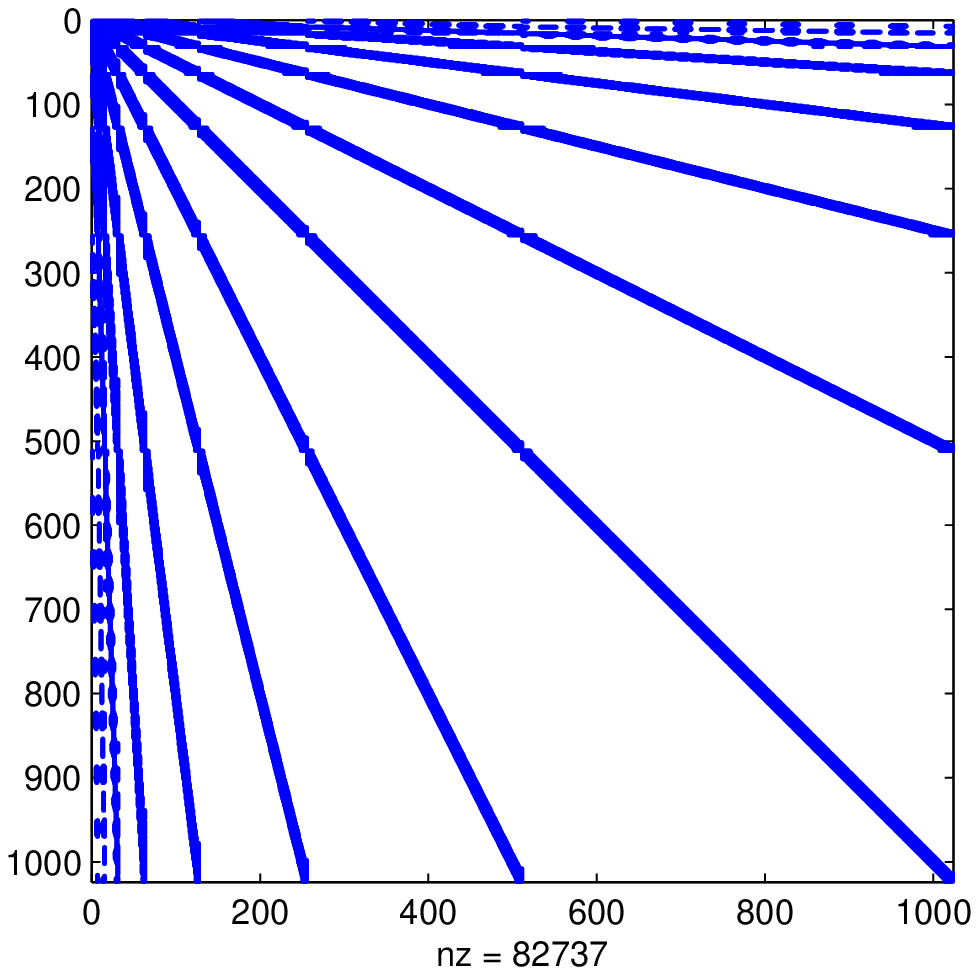}
\caption{Sparsity patterns of the compression matrix $\hat{A}_J$ by using the semiorthogonal with $\beta=5/10$ and $d=2$ (left) and the biorthogonal wavelet ${}^{2,4}\psi$ with $d=2$ and $\tilde{d}=4$ bases (right).} \label{fig:6.1}
\end{center}
\end{figure}

\begin{table}[h t b p]\fontsize{6.0pt}{11pt}\selectfont
\begin{center}
 \caption{Compression capacity of the different bases for operator $\bf{A}$.} \label{tab:115}
\begin{tabular}{|l|c c |c c|c c|lc|}
  \hline
  $\qquad\quad\psi$      &\multicolumn{2}{c|}{$\beta=8/10$}  &\multicolumn{2}{c|}{$\beta=5/10$} & \multicolumn{2}{|c|}{$\beta=2/10$} &\multicolumn{2}{c|}{Note}\\
  \cline{2-9}
                     & $A_J$& $\hat{A}_J $     &$A_J$   &$ \hat{A}_J $   & $A_J$& $ \hat{A}_J $         &\quad wavelet &compression\\
  \hline
  $d=2, Inte-$        & 99.80\%   & 99.07\%     &99.80\%     & 80.38     & 99.80\%      & 48.04\%        & interpolation    &No/Yes\\
  $d=2, Semi- $       & 99.80\%   & 6.66\%      & 99.80\%    & 6.14\%    & 99.80\%      &5.31\%          & semiorthogonal     &Yes\\
  $d=2,\tilde{d}=4$   &99.80\%    & 8.15\%      & 99.80\%    & 7.89\%    & 99.80\%      &7.15\%          & biorthogonal       &Yes\\
  $d=3,\tilde{d}=3$   & 100\%     & 10.18\%     & 100\%      & 9.52\%     & 100\%      &8.30\%           & biorthogonal       &Yes \\
 \hline
\end{tabular}
\end{center}
\end{table}
\section*{Appendix} Here we present the techniques for computing the left fractional derivative of the base function $\Phi_J=\{\phi_{J,k},k\in\triangle_J, d=4\}$. Besides $\phi(x)$ and $\phi_b(x)$ given by (\ref{Cubic:1}) and (\ref{Cubic:2}), define
$$
  \phi_{a}(x)=3x_+-\frac{9}{2}x_+^2+\frac{7}{4}x_+^3-2(x-1)_+^3+\frac{1}{4}(x-2)_+^3.
$$
Then $\Phi_J=2^{J/2}\big\{\phi_a(2^Jx),\phi_b(2^Jx), \phi(2^Jx-k) \big| _{k=0}^{2^J-4}, \phi_b\left(2^J(1-x)\right), \phi_a\left(2^J(1-x)\right)\big\}$ is a Riesz bases of $S_J$. For $x_0\ge 0$, it is easy to check that
     \[{}_0 D_x^{1-\beta}\left(H(x-x_0)v(x)\right)=H(x-x_0){}_{x_0}D_x^{1-\beta}v(x), \]
  where $H(x)$ denotes the Heaviside function. By the well-known formulae
  \begin{eqnarray*}
   &\ &{}_aD_x^{1-\beta}(x-a)^{\nu}=\frac{\Gamma(\nu+1)(x-a)^{\nu+\beta-1}}{\Gamma(\nu+\beta)}, \quad \nu\in \mathbb{N}, \\
   &\ &(b-ax)^k_+=(b-ax)^k+(-1)^{k-1}(ax-b)_+^k,\quad k\in \mathbb{N}^+,
   \end{eqnarray*}
for $b/a\ge0, \, k\in \mathbb{N}^+$, there exist
  \begin{eqnarray*}
  && {}_0 D_x^{1-\beta}(ax-b)_+^k=a^{2-\beta}\frac{\Gamma(k+1)}{\Gamma(k+\beta-1)}(ax-b)_+^{k+\beta-1},\\
   &&{}_0 D_x^{1-\beta}(b-ax)_+^k=(-1)^{k-1}{}_0 D_x^{1-\beta}(ax-b)_+^k+\sum_{m=0}^k{k\choose m} a^kb^{k-m}\frac{m!(-1)^m}{\Gamma(m+\beta)}x_+^{m+\beta-1},
   \end{eqnarray*}
 Define
\begin{eqnarray*}
 &&M_1(x):={}_0 D_x^{1-\beta}\phi_{a}(x)=\frac{3}{\Gamma(\beta+2)}\left((\beta+1) x_+^{\beta}-3x_+^{\beta+1}+\frac{7}{2(\beta+2)}x_+^{\beta+2}\right)\\
 &&\qquad\qquad\qquad\qquad\qquad+\frac{3}{2\Gamma(\beta+3)}\left(-8(x-1)_+^{\beta+2}+(x-2)_+^{\beta+2}\right),\\
 && M_2(x):={}_0 D_x^{1-\beta}\phi_{b}(x)=\frac{1}{\Gamma(\beta+2)}\left(3x_+^{\beta+1}-\frac{11}{2(\beta+2)}x_+^{\beta+2}\right)\\
 &&\qquad\qquad\qquad\qquad\qquad+\frac{1}{2\Gamma(\beta+3)}\left(18(x-1)^{\beta+2}_+-9(x-2)_+^{\beta+2}+2(x-3)_+^{\beta+2}\right),\\
 && M_3(x):={}_0 D_x^{1-\beta}\phi(x)=\frac{1}{\Gamma(\beta+3)}\sum_{i=0}^4{4\choose i}(-1)^i(x-i)_+^{\beta+2},\\
  &&M_4(x,l):={}_0 D_x^{1-\beta}\phi_{b}(l-x)=\frac{-1}{\Gamma(\beta+2)}\left(3(x-l)_+^{\beta+1}+\frac{11}{2(\beta+2)}(x-l)_+^{\beta+2}\right)\\
 &&\qquad\qquad\qquad+\,\frac{1}{2\Gamma(\beta+3)}\!\Big(18(x-l+1)^{\beta+2}_+-9(x-l+2)_+^{\beta+2}+2(x-l+3)_+^{\beta+2}\Big).
\end{eqnarray*}
Then we have
\begin{eqnarray*}
&\,&{}_0 D_x^{1-\beta}\left(2^{J/2}\phi_{a_i}(2^Jx)\right)=2^{J(3/2-\beta)}M_i\left(2^Jx\right),\quad (i,a_i)=(1,a)\, {\rm or}\, (2,b),\\[4pt]
&\,&{}_0 D_x^{1-\beta}\left(2^{J/2}\phi(2^Jx-k)\right)=2^{J(3/2-\beta)}M_3\left(2^Jx-k\right),\\
&\,&{}_0 D_x^{1-\beta}\left(2^{J/2}\phi_{b}\left(2^J(1-x)\right)\right)=2^{J(3/2-\beta)}M_4\left(2^Jx,2^J\right).
\end{eqnarray*}
The similar formulae can also be derived for $2^{J/2}\phi_a(2^J(x-1))$.

\end{document}